\newtheorem{theorem}{Theorem}
\newtheorem*{acknowledgement*}{Acknowledgement}
\newtheorem{assumption}{Assumption}
\newtheorem{corollary}[theorem]{Corollary}
\newtheorem{definition}[theorem]{Definition}
\newtheorem{lemma}[theorem]{Lemma}
\newtheorem{proposition}[theorem]{Proposition}
\newtheorem{remark}[theorem]{Remark}
\def\F{\mathcal F}
\DeclareMathOperator{\E}{\mathbb E}
\newcommand{\kom}[1]{}
\renewcommand{\kom}[1]{{\bf [#1]}}
\newcommand{\ubar}[1]{\underaccent{\bar}{#1}}
\newcounter{komcounter}
\numberwithin{komcounter}{section}
\title{Local time pushed mixed stopping and smooth fit for time-inconsistent stopping problems}
\author{
Andi Bodnariu\\Department of Mathematics, Stockholm University\\ 
\\ Sören Christensen\\Department of Mathematics, Kiel University\\
\\Kristoffer Lindensjö\\Department of Mathematics, Stockholm University
}
\begin{document}

\maketitle

\begin{abstract}  We consider the game-theoretic approach to {time-inconsistent} stopping of a one-dimensional diffusion where the {time-inconsistency} is due to the presence of a non-exponential (weighted) discount function. In particular, we study (weak) equilibria for this problem in a novel class of mixed (i.e., randomized) stopping times based on a local time construction of the stopping intensity. For a general formulation of the problem we provide a verification theorem giving sufficient conditions for mixed (and pure) equilibria in terms of a set of variational inequalities, including a smooth fit condition. We apply the theory to prove the existence of (mixed) equilibria in a recently studied real options problem in which no pure equilibria exist.
\end{abstract}




 







\section{Introduction}\label{intro}
In this paper we study the game-theoretic approach to {time-inconsistent} stopping where the {time-inconsistency} is due to the presence of a non-exponential discount function. In particular, we consider the problem of how to choose a stopping time $\tau$ that minimizes---in the (weak) equilibrium sense---the cost function 
\begin{align}\label{cost_equation}
J_{\tau}(x):=\E_x\left(\int_0^\tau h(s)f(X_s)ds+h(\tau)g(X_\tau)\right),
\end{align}
where 
(i)  $h:[0,\infty)\rightarrow (0,1]$ is weighted discount function (WDF), which means that it is strictly decreasing and allows the representation 
\begin{align*}
h(t)=\int_0^\infty e^{-rt}dF(r),
\end{align*} 
where $F$ a distribution function concentrated on $[0,\infty)$, 
(ii) the functions $f$ and $g$ are non-negative, and 
(iii) the process $X=\left(X_t\right)_{t\geq0}$ is the strong solution to the one-dimensional stochastic differential equation (SDE)
\begin{align} \label{the-diffusion}
dX_t = \mu(X_t)dt + \sigma(X_t)dW_t, \enskip X_0 = x.
\end{align}
Section \ref{sec:model} gives more details on the mathematical model and the problem while Section \ref{sec:prevlit} gives a background on the game-theoretic approach to time-inconsistent problems. We study equilibria for this problem in a novel class of mixed (i.e., randomized) stopping times and the investigation of this class of stopping times is one of the main contributions of this paper.  

The game-theoretic approach to stopping problems with time-inconsistency due to non-exponential discounting was first considered in \cite{huang2018time}. Our investigations are inspired by a recent paper by Tan, Wei and Zhou; see \cite{tan2021failure}, where a weak equilibrium approach is used---we refer to \cite{bayraktar2022equilibria} for an overview and a comparison of different equilibrium concepts for time-inconsistent stopping problems. The main difference between \cite{tan2021failure} and the present paper is that in \cite{tan2021failure} only pure stopping strategies (essentially corresponding to exit times from sets in the state space of $X$) are admissible whereas we study the problem when allowing for the novel mixed stopping strategies mentioned above. The importance of our contribution is explained in the next paragraphs.

In \cite{tan2021failure}, a verification theorem giving sufficient conditions for pure equilibria in terms of a differential equation is provided. The main structural observation of \cite{tan2021failure} is obtained by studying  a specification of the problem   
corresponding to the discount function
\begin{align*}
h(t)=pe^{-r_1t}+(1-p)e^{-r_2t},
\end{align*} 
where $p\in(0,1)$ and $r_2>r_1>0$, and the cost function
\begin{align}\label{cost-func-real-op}
J_{\tau}(x)=\E_x\left(\int_0^\tau h(s)X_sds+h(\tau)K\right),
\end{align}
assuming that $X$ is a geometric Brownian motion (GBM) and that $K>0$ is a constant; see \cite[Section~3.3]{tan2021failure}, where this problem is also motivated as a real options problem. For this specification of the problem it is in \cite{tan2021failure} shown that a smooth fit condition is necessary in order for a pure threshold stopping strategy to be an equilibrium (see \cite{bayraktar2022equilibria} for a general result in this direction) but it is also shown that obtaining a reasonable candidate solution fulfilling smooth fit is not guaranteed to result in an equilibrium. 
In particular, it is found that 
(i) if a certain condition for the model parameters is satisfied then a threshold stopping time can be identified as an equilibrium using a smooth fit principle, and 
(ii) if this condition is not satisfied, however, then there is still a threshold time with a smooth value function, but this does not result in an equilibrium, and, moreover, there exists is in fact no (pure) equilibrium in this case.

To facilitate the existence of equilibria in the weak equilibrium approach to time-inconsistent stopping in a one-dimensional SDE setting, a class of mixed strategies is introduced in \cite{christensen2020time}, but there the time-inconsistency is due to the consideration of a non-linear function of an expected reward. 
The notion of mixed strategies in \cite{christensen2020time} corresponds to stopping according to a state-dependent intensity of Lebesgue density type. 
This class is, however, not suitable to deal with the problem \eqref{cost_equation}. The notion of mixed stopping strategy in the present paper therefore  generalizes that of \cite{christensen2020time} by allowing, loosely speaking, the accumulated intensity to increase in a singular fashion using a local time construction; see Definition \ref{def:mix_strat} and Remark \ref{rem:motivation_defs}. We call these strategies  local time pushed mixed stopping strategies. In relation to this we mention \cite{ekstrom2017dynkin} in which a two-player Dynkin game with heterogeneous beliefs is studied and where, in \cite[Example 5.4]{ekstrom2017dynkin}, a randomized stopping time based on a similar local time construction is considered.

These are the main results of the present paper: (i) for the general problem presented above we formulate and prove a verification theorem giving sufficient conditions for mixed (and pure) equilibria in terms of a set of variational inequalities, which includes a smooth fit condition, and (ii) for the real options specification of the problem \eqref{cost-func-real-op}, we show that an equilibrium always exists when allowing for local time pushed mixed stopping and find this equilibrium using an ansatz based on the smooth fit condition.

Further previous literature is discussed in Section \ref{sec:prevlit}. In Section \ref{sec:model} we give the definitions of pure and mixed stopping strategies and of the equilibrium, as well as further details of the underlying mathematical model.  In Section \ref{sec:ver-res} we formulate and prove the equilibrium verification theorem. In Section \ref{sec:mixed-th-strat} we establish further results for mixed stopping strategies, which are later used in an ansatz to finding the equilibrium solution for the real options problem. The real options problem is studied in Section \ref{sec:real-option-problem}.

%
%
%
%
%
%
%

\subsection{Previous literature}\label{sec:prevlit}
The underlying decision-theoretic ideas for the problem considered here go back to the concept of {consistent planning} by Strotz \cite{strotz}. For the mathematical literature, we first mention the recently published monograph \cite{bjork2021time} which contains an overview of the game-theoretic approach to time-inconsistent stochastic control and stopping problems. For a general motivation and background to the game-theoretic approach to time-inconsistent stopping problems, and in particular the weak equilibrium formulation, we refer to 
\cite{christensen2018finding} where the time-inconsistency is due to the consideration of an expected value of a reward function which depends on the initial state and pure stopping is considered in both a general Markov process setting and a diffusion setting, as well as \cite{christensen2020time} and \cite{tan2021failure} (both mentioned in Section \ref{intro}). The weak equilibrium approach is also considered in \cite{liang2021weak} where a time-inconsistent combined control and stopping problem is studied for a general diffusion and a differential equation type verification theorem is provided for pure stopping strategies. 

In the literature for time-inconsistent stopping problems there are three complementing types of equilibrium definitions: weak, mild and strong, and in this paper we always consider weak equilibria. The relationship between these equilibrium definitions is investigated in \cite{bayraktar2022equilibria} in a setting corresponding to a one-dimensional diffusion, pure stopping strategies and time-inconsistency due to the presence of a non-exponential discount function satisfying what is known as the log sub-additivity condition. In particular, it is found that 
an optimal mild equilibrium is a weak equilibrium and conditions implying that a weak equilibrium is also a strong equilibrium are found. A variational inequality characterization of weak equilibria is provided. A similar analysis is performed in \cite{https://doi.org/10.1111/mafi.12293}, but there the state process is a continuous time Markov chain.

The mild equilibrium definition is studied in  \cite{huang2018time} in a setting corresponding to 
a non-exponential discount function satisfying log sub-additivity,
a one-dimensional diffusion, and pure stopping strategies.  In \cite{huang2021optimal} the multi-dimensional diffusion version of this problem is studied and an existence result for  optimal mild equilibria is provided. A discrete time version of the problem is studied in \cite{huang2019optimal} and existence of an optimal mild equilibrium is established.  In \cite{huang2022time} a two-player Dynkin game under non-exponential discounting in discrete time is studied. 

Mixed stopping strategies in the context of time-inconsistent stopping problems for the discrete time case are studied in \cite{bayraktar2019time} and \cite{christensen2020Timemyopic}.  For related results in an MDP setting, we refer to \cite{Jaskiewicz2021} and the references therein. Mixed stopping strategies for continuous time two-player stopping games are studied in, e.g., \cite{de2020value,ekstrom2017dynkin,riedel2017subgame,touzi2002continuous} and for a continuous time controller-stopper game with unknown competition in \cite{ekstrom2022detect}.  Note, however, that in these games, there are two players choosing randomized stopping times as their strategies while in our framework we have a continuum of players (one for each state $x$) and the randomized stopping time is their strategy profile.

\section{Model and problem formulation}\label{sec:model}
All random quantities live on a filtered probability space $(\Omega, \F, \left({\cal F}_t\right)_{t\geq 0}, \mathbb{P}_x)$ satisfying the usual conditions and carrying the Wiener process $W=\left(W_t\right)_{t\geq0}$. 
We denote the associated expectations and probabilities by $\mathbb{E}_x$ and $\mathbb{P}_x$, respectively. Further assumptions are collected here.

\begin{assumption} 
The state process $X=(X_t)_{t \geq 0}$
lives on an open interval $(\alpha,\beta)$ with $-\infty\leq \alpha \leq \beta  \leq  \infty$, 
satisfies the strong Markov property, 
and is the strong solution to the SDE \eqref{the-diffusion} whose coefficients 
$\mu:(\alpha,\beta)\rightarrow  \mathbb{R}$ 
and 
$\sigma:(\alpha,\beta) \rightarrow  (0,\infty)$ 
are continuous. The function $x\mapsto \mathbb{P}_x (A)$ is  measurable for any $A \in {\cal F}$. 
There exists a measurable time shift operator $\theta$ such that 
$X_\tau \circ \theta_{\tau_h}  =   X_{\tau \circ \theta_{\tau_h} + \tau_h}$ for any stopping time $\tau$ and  
$\tau_h:=\inf\{t\geq 0: |X_t-X_0| \geq h\}$. 
The function $f:(\alpha,\beta)\rightarrow [0,\infty)$ in \eqref{cost_equation} is continuous whereas 
$g:(\alpha,\beta)\rightarrow (0,\infty)$ is twice continuously differentiable and bounded. Lastly, we use the notation
\begin{align}\label{w_eq}
w_{\tau}(x,r):=\E_x\left(\int_0^\tau e^{-rs}f(X_s)ds+e^{-r\tau}g(X_\tau)\right)
\end{align}
and assume that for some $C(x,r)>0$ it holds that $\sup_\tau w_{\tau}(x,r) \leq C(x,r)$ where the supremum is taken over the set of stopping times $\mathcal N$ (defined below) and $x\mapsto \int_0^\infty C(x,r)dF(r)+\int_0^\infty rC(x,r)dF(r)$ is locally bounded. It is also assumed that $F$ has a finite second moment.  
%
%
\end{assumption}
Note that \eqref{w_eq} allows, as in \cite{tan2021failure}, the cost function \eqref{cost_equation} to be written as
\begin{align}\label{J-in-terms-w}
J_{\tau}(x)=\int_0^\infty w_\tau(x,r)dF(r).
\end{align}
Throughout the paper we use use the convention that
\begin{align*}
e^{-r \tau}g(X_{\tau}) = 0 \enskip \mbox{on} \enskip \{\tau=\infty\}.
\end{align*}
The augmented filtration generated by $X$ is denoted by $\left({\cal F}^X_t\right)_{t \geq 0}$. For the differential operator associated to the SDE \eqref{the-diffusion} we use the notation
\begin{align*}
\mathbf{A} =\mu(x)\frac{d }{d x}+\frac{1}{2}\sigma^2(x)\frac{d^2 }{d x^2}.
\end{align*}
In the following we define the notions of pure and mixed stopping times and of the (Nash) equilibrium; see the references in Section \ref{sec:prevlit}---in particular \cite{christensen2018finding,christensen2020time}---as well as Remark \ref{rem:motivation_defs} (below) for interpretations.

\begin{definition}[Pure and  mixed stopping times] \label{def:mix_strat}  Let $D \subset (\alpha,\beta)$ be an open set and let
\begin{align*}
\tau^D :=  \inf\{t\geq 0: X_t \notin D\}
\end{align*}
which is said to be a pure (strategy) stopping time. Let  
\begin{align*}
\tau^{\Psi,\mathcal{X}}:=\inf\{t\geq 0:\Psi_t\geq U\}
\end{align*}
where $U\sim exp(1)$ is an exponentially distributed random variable that is independent of all other random sources and
\begin{align}\label{Psi-def}
\Psi_t:=\sum_{i=1}^{n}\psi_i l_t^{x_i}+\int_0^t \psi(X_s)ds,
\end{align}
where $n$ is a fixed number, 
$x_i\in\mathcal{X}:=\{x_1,\dots,x_n\} \subset D$, 
$\psi_i\in (0,\infty)$ are fixed constants,
$\psi:D \rightarrow [0,\infty)$ is a right continuous with left real limits (RCLL) function with finitely many discontinuities (on the complement $D^c$ we define the function $\psi(\cdot)$ to be identically equal to zero), 
and $l^{x_i}=\left(l_t^{x_i}\right)_{t\geq 0}$ is the local time of $X$ at $x_i$, 
\begin{align*}
l_t^{x_i}:=|X_t-{x_i}|-|X_0-{x_i}|-\int_0^t \mbox{sgn}(X_s-x)dX_s.
\end{align*}
Lastly, let
\begin{align*}
\tau^{\Psi,\mathcal{X},D} :=  \tau^{\Psi,\mathcal{X}}\wedge \tau^D
\end{align*}
which is said to be a local time pushed mixed (strategy) stopping time.  If the function $x \mapsto w_{\tau^{\Psi,\mathcal{X},D}}(x,r)$, cf. \eqref{w_eq}, is continuous for any fixed $r$ (in the support of $F$), then $\tau^{\Psi,\mathcal{X},D}$ is said to be admissible and we write $\tau^{\Psi,\mathcal{X},D} \in \mathcal N$ (which denotes the set of admissible stopping times).
\end{definition}

\begin{remark}\label{rem:motivation_defs}
In \eqref{Psi-def} the function $\psi(\cdot)$  corresponds to a state-dependent stopping intensity of Lebesgue density type, whereas the constants $\psi_i$ correspond to state dependent singular increases in the accumulated stopping intensity induced by local times. We also remark that the intensity funcion 
$\psi(x)$ is allowed to explode as $x \in D$ is sent to $\partial D$. The function $\lambda(\cdot)$ and the constant $\lambda$ in Definition \ref{def:mixed_threshold_stopping_time} (below) have analogous interpretations. 
\end{remark}

When there is no risk of confusion we will sometimes ease notation by writing 
$\tau^{\Psi}$ instead of $\tau^{\Psi,\mathcal{X}}$ and
$\tau^{\Psi,D}$ instead of $\tau^{\Psi,\mathcal{X},D}$. For any two stopping times $\tau^{(1)}$ and $\tau^{(2)}$ we let
\begin{align*} 
\tau^{(1)} \diamond \tau^{(2)}(h):=
I_{\{\tau^{(2)} \leq \tau_h\}}\tau^{(2)} + I_{\{\tau^{(2)} > \tau_h\}}(\tau^{(1)}   \circ \theta_{\tau_h}+\tau_h).
\end{align*}
Now we formalize the idea that a stopping time $\hat\tau  \in\mathcal{N}$ represents an equilibrium if deviations from it do not provide an advantage (to first order) in any state.

\begin{definition} [Equilibrium]\label{def:equ_stop_time}
A stopping time $\hat\tau  \in\mathcal{N}$ is said to be a (weak) equilibrium stopping time if
\begin{align}
\limsup_{h\searrow 0}\frac{J_{\hat\tau}(x)-J_{\hat\tau\diamond \tau^{\Psi,\mathcal{X},D}(h)}(x)}{\E_x(\tau_h)}&\leq 0\label{eqdef2}
\end{align}
for each $\tau^{\Psi,\mathcal{X},D}\in\mathcal{N}$ and $x\in (\alpha,\beta)$. 
\end{definition}
A purpose of the present paper is to find equilibrium stopping times of threshold type which we define as follows. 

\begin{definition} [Mixed and pure threshold stopping times] \label{def:mixed_threshold_stopping_time}
Let 
\begin{align*}
\tau^{\Lambda,\ubar{x},\bar{x}}:=\tau^\Lambda \wedge \inf\{t\geq 0:X_t\geq \bar{x}\}
\end{align*}
where
\begin{align*}
\tau^\Lambda:=\inf\{t\geq 0:\Lambda_t\geq U\}
\end{align*}
and
\begin{align*}
\Lambda_t:=\lambda l_t^{\ubar{x}}+\int_0^t\lambda(X_t)dt,
\end{align*}
where $\lambda>0$ is a constant, 
$\alpha \leq \ubar{x}<\bar{x}<\beta$ and $\lambda:(\alpha,\bar{x})\rightarrow [0,\infty)$ is a RCLL function (with real left limits) and finitely many discontinuities, satisfying 
\begin{align*}
\begin{split}
&\lambda(x)=0 \enskip \mbox{ for $x  \in (\alpha,\ubar{x})$},\\
&\lambda(x)\geq 0 \enskip \mbox{ for $x  \in [\ubar{x},\bar{x})$}. 
\end{split}
\end{align*}
(On $[\bar{x},\infty)$ we define $\lambda(\cdot)$ to be identically equal to zero). The stopping time $\tau^{\Lambda,\ubar{x},\bar{x}}$ is said to be a mixed threshold stopping time. For $\ubar{x}=\bar{x}$, we let $\tau^{\Lambda,\ubar{x},\bar{x}}:=\inf\{t\geq 0:X_t\geq \ubar{x}\}$ and call this a pure threshold stopping time. \end{definition}


\subsection{Initial observations}
The analysis in the subsequent sections relies on the results in this section. In this section we write $\tau^\Psi$ instead of $\tau^{\Psi,\cal X}$ for ease of exposition.  

\begin{proposition}  \label{dist-for-Lambda:prop}
The distribution of $\tau^\Psi$ conditioned on observing the whole path of $X$ is given by 
\begin{align*}
\mathbb{P}_x\left( \tau^\Psi  \leq t \enskip \vline \enskip {\cal F} ^X_\infty  \right) 
=  1 - e^{-\Psi_\tau} - \int_t^\tau e^{-\Psi_s} d\Psi_s,
\end{align*}
where $\tau:= \inf\{t\geq 0: \Psi_t=\infty\}$ 
(we use the convention $\inf \emptyset = \infty$ and that the integral from $s$ to $t$ is zero if  $\Psi_t=\Psi_s=\infty$). 
Moreover, 
$$\mathbb{P}_x\left( \tau^\Psi = \infty \enskip \vline \enskip {\cal F}^X_\infty \right) = e^{-\Psi_\tau}.$$
\end{proposition}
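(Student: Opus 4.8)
The plan is to condition on the path $\mathcal F^X_\infty$ and exploit the fact that, given the path, the random variable $U\sim\exp(1)$ is the only remaining source of randomness, with $\Psi$ a fixed (path-dependent) nondecreasing RCLL function of $t$. First I would observe that, conditionally on $\mathcal F^X_\infty$, by definition $\tau^\Psi=\inf\{t\ge 0:\Psi_t\ge U\}$, so that for fixed $t$,
\begin{align*}
\{\tau^\Psi\le t\}=\{\Psi_t\ge U\}\quad\text{when }\Psi\text{ is continuous at its first passage,}
\end{align*}
but since $\Psi$ has jumps (at times $X$ hits some $x_i$, the local time term is continuous, so in fact the only jumps of $\Psi$ come from jumps of $\psi(X_\cdot)$ integrated in $ds$, which contribute nothing instantaneously — so $\Psi$ is actually continuous in $t$; I would note this simplification). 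Hence $\{\tau^\Psi\le t\}=\{U\le\Psi_t\}$ up to the subtlety at $t=\tau$ where $\Psi$ may reach $\infty$. Therefore $\mathbb P_x(\tau^\Psi\le t\mid\mathcal F^X_\infty)=\mathbb P(U\le\Psi_t\mid\mathcal F^X_\infty)=1-e^{-\Psi_t}$ for $t<\tau$, using independence of $U$ from $\mathcal F^X_\infty$.

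The remaining work is to rewrite $1-e^{-\Psi_t}$ in the stated form $1-e^{-\Psi_\tau}-\int_t^\tau e^{-\Psi_s}\,d\Psi_s$. This is just the fundamental theorem of calculus / integration by parts for the function $s\mapsto e^{-\Psi_s}$: since $\Psi$ is continuous and nondecreasing on $[t,\tau)$,
\begin{align*}
\int_t^\tau e^{-\Psi_s}\,d\Psi_s = \int_t^\tau \left(-\,d\,e^{-\Psi_s}\right)= e^{-\Psi_t}-\lim_{s\uparrow\tau}e^{-\Psi_s}=e^{-\Psi_t}-e^{-\Psi_\tau},
\end{align*}
where $e^{-\Psi_\tau}:=\lim_{s\uparrow\tau}e^{-\Psi_s}$ (equal to $0$ when $\Psi_\tau=\infty$, consistent with the stated convention that the integral is $0$ when $\Psi_t=\Psi_s=\infty$). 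Rearranging gives $1-e^{-\Psi_t}=1-e^{-\Psi_\tau}-\int_t^\tau e^{-\Psi_s}\,d\Psi_s$, which is the claim. For the "moreover" statement, $\{\tau^\Psi=\infty\}=\{U>\Psi_s\ \text{for all }s\}=\{U>\sup_s\Psi_s\}=\{U>\Psi_\tau\}$ (with $\Psi_\tau$ the supremum, possibly $\infty$), so $\mathbb P_x(\tau^\Psi=\infty\mid\mathcal F^X_\infty)=e^{-\Psi_\tau}$, again by independence, with the convention that this is $0$ when $\Psi_\tau=\infty$.

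The main obstacle — really the only delicate point — is the careful bookkeeping at the "explosion time" $\tau=\inf\{t:\Psi_t=\infty\}$: one must check measurability of $\tau$ with respect to $\mathcal F^X_\infty$, justify that $\Psi$ is continuous (so that the Stieltjes integral $\int e^{-\Psi_s}d\Psi_s$ behaves as an ordinary change of variables and no jump corrections appear), handle the case $\Psi_t=\infty$ for $t\ge\tau$ where both sides collapse to $1$ under the stated convention, and confirm that the local-time terms $\psi_i l^{x_i}_\cdot$ and the integral term are $\mathcal F^X_\infty$-measurable and nondecreasing so that $\Psi$ is a bona fide nondecreasing continuous process pathwise. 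Once these routine but necessary checks are in place, the identity is immediate from the one-line computation above.
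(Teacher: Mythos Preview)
Your proposal is correct and follows essentially the same route as the paper: both use independence of $U$ from $\mathcal F^X_\infty$ to get $\mathbb P_x(\tau^\Psi>t\mid\mathcal F^X_\infty)=e^{-\Psi_t}$, then rewrite this via the change-of-variables identity $e^{-\Psi_t}=e^{-\Psi_s}-\int_s^t e^{-\Psi_u}\,d\Psi_u$ (the paper phrases this as ``It\^o's formula for semimartingales'' applied to the increasing process $\Psi$, while you invoke the fundamental theorem of calculus after noting that $\Psi$ is continuous). Your additional remarks on continuity of $\Psi$ and bookkeeping at $\tau$ are more explicit than the paper's treatment but not a different argument.
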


\begin{proof}

Using first that $U \sim exp(1)$ and second that $\Psi$ is an increasing process and It\^{o}'s formula for semimartingales we find 
\begin{align*}
\mathbb{P}_x\left( \tau^\Psi > t \enskip \vline \enskip {\cal F} ^X_\infty \right) 
& =  \mathbb{P}_x\left(  \Psi_t < U \enskip \vline \enskip {\cal F} ^X_\infty \right)\\
& = e^{-\Psi_t}\\
& = e^{-\Psi_s} - \int_s^t e^{-\Psi_u} d\Psi_u,
\end{align*}
for any $s\geq0$ and the first result follows.   
To see that the second result holds observe that 
\begin{align*}
e^{-\Psi_\tau} & =\lim_{t\rightarrow \infty}\mathbb{P}_x\left( \tau^\Psi > t \enskip \vline \enskip {\cal F} ^X_\infty \right)\\
& = \mathbb{P}_x\left( \tau^\Psi = \infty \enskip \vline \enskip {\cal F} ^X_\infty\right).
\end{align*}
\end{proof}

\begin{proposition} \label{centralProposition} For any fixed constant $r\geq0$ and $x \in D$, it holds that
\begin{align}
\E_x\left(\int_0^{\tau^{\Psi,D}}e^{-rt} g(X_t)d\Psi_t  \right) 
=
\E_x\left( e^{-r \tau^{\Psi}}g(X_{\tau^{\Psi}}) I_{\{\tau^\Psi \leq \tau^D\}}  \right).\label{eq1:centralProposition}
\end{align}
\end{proposition}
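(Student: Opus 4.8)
The plan is to condition on $\mathcal F^X_\infty$ and reduce both sides to path-wise identities involving the increasing process $\Psi$, using Proposition~\ref{dist-for-Lambda:prop}. Fix $r\geq 0$ and $x\in D$. Write $\tau^D=\inf\{t\geq 0:X_t\notin D\}$ and recall $\tau=\inf\{t:\Psi_t=\infty\}$; since $\psi\geq 0$ vanishes off $D$ and the local-time points $x_i$ lie in $D$, the process $\Psi$ is constant after $\tau^D$, so $\tau^\Psi\leq\tau^D$ exactly captures the event that the independent exponential clock $U$ is exceeded before exiting $D$, and on $\{\tau^\Psi>\tau^D\}$ we have $\tau^{\Psi,D}=\tau^D$.

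First I would handle the right-hand side. Conditioning on $\mathcal F^X_\infty$, the quantity $e^{-r\tau^\Psi}g(X_{\tau^\Psi})I_{\{\tau^\Psi\leq\tau^D\}}$ depends on $U$ only through $\tau^\Psi$, and $\{\tau^\Psi\leq t\}=\{\Psi_t\geq U\}$. By Proposition~\ref{dist-for-Lambda:prop}, conditionally on the path the law of $\tau^\Psi$ on $[0,\tau^D]$ has ``density'' $e^{-\Psi_s}\,d\Psi_s$ with respect to the integrator $\Psi$ (plus possibly an atom at $\tau$, which is irrelevant here since $\tau\geq\tau^D$ is not counted, and if $\tau<\infty$ with $\tau\le\tau^D$ the convention still matches). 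Hence
\begin{align*}
\E_x\!\left(e^{-r\tau^\Psi}g(X_{\tau^\Psi})I_{\{\tau^\Psi\leq\tau^D\}}\,\Big|\,\mathcal F^X_\infty\right)
=\int_0^{\tau^D} e^{-rt}g(X_t)\,e^{-\Psi_t}\,d\Psi_t.
\end{align*}
For the left-hand side, since $\Psi$ is $\mathcal F^X_\infty$-measurable and $U$ is independent, the only randomness in $\int_0^{\tau^{\Psi,D}}e^{-rt}g(X_t)\,d\Psi_t$ given the path is the upper limit $\tau^{\Psi,D}=\tau^\Psi\wedge\tau^D$. I would use Fubini/Tonelli (everything is nonnegative) to write this as $\int_0^{\tau^D} e^{-rt}g(X_t)\,\mathbb P_x(\tau^\Psi> t\mid\mathcal F^X_\infty)\,d\Psi_t$, and then invoke Proposition~\ref{dist-for-Lambda:prop} again, namely $\mathbb P_x(\tau^\Psi>t\mid\mathcal F^X_\infty)=e^{-\Psi_t}$, to get the same expression $\int_0^{\tau^D}e^{-rt}g(X_t)e^{-\Psi_t}\,d\Psi_t$. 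Taking $\E_x$ of both conditional identities (tower property) yields \eqref{eq1:centralProposition}.

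The main obstacle I anticipate is making the interchange of integration rigorous when the Stieltjes integrator $\Psi$ has jumps (from the local-time terms: $l^{x_i}$ is continuous, so actually $\Psi$ is continuous, but $\psi$ being only RCLL means $\Psi$ is still continuous in $t$ since it integrates $\psi$ in $ds$ — so in fact $\Psi$ has continuous paths, which simplifies matters) and, more delicately, handling the behaviour at $\tau$ and on $\{\tau^D=\infty\}$ together with the convention $e^{-r\tau}g(X_\tau)=0$ on $\{\tau=\infty\}$. One must check that the contribution of the event $\{\tau^\Psi=\infty\}$ (mass $e^{-\Psi_{\tau\wedge\tau^D}}$ conditionally) correctly drops out of both sides, and that if $\Psi_{\tau^D}=\infty$ the boundary term is absorbed consistently; I would dispatch this by splitting on $\{\tau\le\tau^D\}$ versus $\{\tau>\tau^D\}$ and applying the conventions stated after Proposition~\ref{dist-for-Lambda:prop}. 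The diffusion dynamics, the discount representation, and the strong Markov property play no role here — this is purely a statement about the randomized clock conditioned on the path.
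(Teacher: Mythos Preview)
Your approach is essentially the same as the paper's: condition on $\mathcal F^X_\infty$, invoke Proposition~\ref{dist-for-Lambda:prop} to identify the conditional law of $\tau^\Psi$ as $e^{-\Psi_t}\,d\Psi_t$, and use Tonelli/tower to match both sides to the common expression $\E_x\bigl(\int_0^{\tau^D}e^{-rt}g(X_t)e^{-\Psi_t}\,d\Psi_t\bigr)$. The paper carries this out as a single chain from left to right (inserting the representation $e^{-\Psi_t}=\int_0^\infty I_{\{\Psi_t\le u\}}e^{-u}\,du$ for the independent exponential explicitly), while you treat the two sides separately, but the content is the same.

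One small correction to your motivational paragraph: it is not true in general that $\Psi$ is constant after $\tau^D$, since $X$ may re-enter $D$ after first exit and both the local-time and Lebesgue parts of $\Psi$ can then grow again. Fortunately you never actually use this claim in the computations---the identity $\int_0^{\tau^\Psi\wedge\tau^D}(\cdot)\,d\Psi_t=\int_0^\infty(\cdot)I_{\{t\le\tau^D\}}I_{\{\Psi_t<U\}}\,d\Psi_t$ and its conditional counterpart hold regardless---so the proof survives once you delete that aside. Your observation that $\Psi$ has continuous paths (local times are continuous, and $\int_0^t\psi(X_s)\,ds$ is absolutely continuous) is correct and is what makes the endpoint issues at $t=\tau^\Psi$ harmless.
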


\begin{proof} Using Proposition \ref{dist-for-Lambda:prop} we rewrite the left hand side in \eqref{eq1:centralProposition} as follows (relying on, e.g., Fubini, the tower property and $e^{-\Psi_t} = \int_0^\infty I_{\{\Psi_t \leq u\}} e^{-u}du $) 
%
%
%
%
\begin{align*}
\E_x\left( \int_0^\infty e^{-r t}g(X_t) I_{\{t \leq \tau^D\}} I_{\{\Psi_t<U\}} d\Psi_t \right)
& = \E_x\left(\E_x\left( \int_0^\infty e^{-r t}g(X_t) I_{\{t \leq \tau^D\}} I_{\{\Psi_t<U\}} d\Psi_t \enskip \vline \enskip {\cal F} ^X_\infty \right)  \right)\\
& = \E_x\left(\int_0^\infty e^{-u}\int_0^\infty e^{-r t}g(X_t) I_{\{t \leq \tau^D\}} I_{\{\Psi_t<u\}} d\Psi_t du  \right)\\
& = \E_x\left(\int_0^\infty  e^{-r t}g(X_t) I_{\{t \leq \tau^D\}} \int_0^\infty e^{-u} I_{\{\Psi_t<u\}} du d\Psi_t  \right)\\
& = \E_x\left( \int_0^\infty e^{-r t}g(X_t) I_{\{t\leq \tau^D\}}e^{-\Psi_t} d\Psi_t\right)\\
& = \E_x\left( \int_0^\infty e^{-r t}g(X_t) I_{\{t \leq \tau^D\}}  
d\mathbb{P}_x\left( \tau^\Psi  \leq t \enskip \vline \enskip {\cal F} ^X_\infty  \right) \right)\\
&=\E_x\left(\E_x\left( e^{-r \tau^{\Psi}}g(X_{\tau^{\Psi}}) I_{\{\tau^\Psi \leq \tau^D\}} I_{\{\tau^\Psi <\infty\}} \enskip \vline \enskip {\cal F} ^X_\infty \right) \right).
\end{align*}
The result follows.
\end{proof}

\section{Main theoretical results}\label{sec:main}
\subsection{A verification theorem}\label{sec:ver-res}

The following result presents sufficient conditions for equilibria of the threshold type (either pure or mixed). 

\begin{theorem}[Verification]\label{main_thm}
Let $\tau^{\Lambda,\ubar{x},\bar{x}}\in \cal {N}$ be a mixed or pure threshold stopping time. If the corresponding cost function, cf. \eqref{cost_equation}, satisfies $J_{\tau^{\Lambda,\ubar{x},\bar{x}}}\in \mathcal{C}^2((\alpha,\ubar{x})\cup(\ubar{x},\bar{x})) \cap \mathcal{C}(\alpha,\beta)$ and the conditions 
\begin{align}
J_{\tau^{\Lambda,\ubar{x},\bar{x}}}(x) - g(x)&< 0, \enskip \mbox{ for } x \in (\alpha,\ubar{x})\label{mainthmcond1}  \tag{I}\\
J_{\tau^{\Lambda,\ubar{x},\bar{x}}}(x) - g(x)&=0, \enskip \mbox{ for } x \in [\ubar{x},\bar{x}) \label{mainthmcond2}  \tag{II}\\
f(x)+\mathbf{A}g(x)-g(x)\int_0^\infty rdF(r)  &\geq 0, \enskip \mbox{ for } x\in [\bar{x},\beta)\label{mainthmcond3} \tag{III}\\
J'_{\tau^{\Lambda,\ubar{x},\bar{x}}}(\ubar{x})&= g'(\ubar{x}), \enskip \mbox{(smooth fit)} \label{mainthmcond4} \tag{IV}
\end{align}
are satisfied, then $\tau^{\Lambda,\ubar{x},\bar{x}}$ is an equilibrium stopping time. 
\end{theorem}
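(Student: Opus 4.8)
My plan is to verify the equilibrium inequality \eqref{eqdef2} directly by analyzing the numerator $J_{\hat\tau}(x) - J_{\hat\tau\diamond\tau^{\Psi,\mathcal X,D}(h)}(x)$ as $h\searrow 0$, splitting into the two natural cases according to whether the current state $x$ lies in the "continuation" region $(\alpha,\bar x)$ (where $\hat\tau=\tau^{\Lambda,\ubar x,\bar x}$ may wait) or in the "stopping" region $[\bar x,\beta)$, and within the former whether $x$ is below or above $\ubar x$. The key tool is the representation \eqref{J-in-terms-w} writing $J_\tau(x)=\int_0^\infty w_\tau(x,r)\,dF(r)$, together with Proposition \ref{centralProposition}, which lets me rewrite the ``local-time mass'' contribution $\E_x\big(\int_0^{\tau^{\Psi,D}}e^{-rt}g(X_t)\,d\Psi_t\big)$ as an honest stopping-payoff term. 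I would first establish a dynamic-programming-type identity for $J_{\hat\tau}$ on a small ball $\{|y-x|<h\}$: conditioning on $\mathcal F_{\tau_h}$ and using the strong Markov property plus the $\diamond$-construction, one gets that for $h$ small the deviation $\hat\tau\diamond\tau^{\Psi,\mathcal X,D}(h)$ uses the arbitrary strategy $\tau^{\Psi,\mathcal X,D}$ only up to time $\tau_h$ and then reverts to $\hat\tau$, so the difference in costs localizes to the behaviour of $w_{\cdot}(x,r)$ near $x$.

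The concrete steps, in order: (1) Fix $x\in(\alpha,\ubar x)$. Here $\hat\tau>0$ a.s.\ locally (the threshold rule does not stop and the intensity $\lambda(\cdot)$ vanishes on $(\alpha,\ubar x)$), so $\hat\tau$ waits past $\tau_h$ with probability $1$; the deviation $\tau^{\Psi,\mathcal X,D}$ can only stop earlier. Using \eqref{J-in-terms-w}, Dynkin's formula applied to $e^{-rt}J_{\hat\tau}(X_t)$ — legitimate because $J_{\hat\tau}\in\mathcal C^2$ there — together with Proposition \ref{centralProposition} to handle the local-time part of $\Psi$, reduces the limsup to an integral against $dF(r)$ of a quantity whose sign is controlled by: the running-cost comparison, the boundary comparison $J_{\hat\tau}-g$, and the generator inequality. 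On $(\alpha,\ubar x)$ condition \eqref{mainthmcond1} $J_{\hat\tau}(x)-g(x)<0$ forces the deviation-by-stopping to be strictly worse, so the numerator is nonpositive (indeed negative) and \eqref{eqdef2} holds. (2) Fix $x\in[\ubar x,\bar x)$. Now $\hat\tau$ is stopping "at rate $\lambda(\cdot)$ plus local time at $\ubar x$", and by \eqref{mainthmcond2} $J_{\hat\tau}=g$ on this interval; I would show that replacing the equilibrium intensity by an arbitrary one on a small ball changes the cost only to second order in $\E_x(\tau_h)$ — this is where the smooth fit condition \eqref{mainthmcond4} $J'_{\hat\tau}(\ubar x)=g'(\ubar x)$ is essential, since it guarantees that the first-order (in $h$) contributions coming from excursions across $\ubar x$ cancel, leaving no first-order advantage. (3) Fix $x\in[\bar x,\beta)$. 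Here $\hat\tau=0$ (immediate stop), so $J_{\hat\tau}(x)=g(x)$, and any deviation keeps the process alive for a short time; Dynkin's formula plus \eqref{mainthmcond3}, namely $f+\mathbf A g - g\int_0^\infty r\,dF(r)\ge0$, shows the deviation cannot lower the cost to first order, giving \eqref{eqdef2}.

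The main obstacle I anticipate is case (2): rigorously controlling the effect of a local-time–pushed deviation across the threshold $\ubar x$ and showing that $J'_{\hat\tau}(\ubar x)=g'(\ubar x)$ is exactly the condition that kills the first-order term in $\E_x(\tau_h)$. Technically this means writing $J_{\hat\tau}(X_{\tau_h})$ near $x=\ubar x$ via a Taylor/Itô-Tanaka expansion that mixes the $\mathcal C^2$ pieces on either side of $\ubar x$ with a local-time term at $\ubar x$, and checking that the discontinuity in $J''$ across $\ubar x$ contributes only at order $o(\E_x\tau_h)$ while the potential jump in $J'$ would contribute at order $\E_x(l_{\tau_h}^{\ubar x})$, which is of the same order as $\E_x(\tau_h)$ for $x$ at or near $\ubar x$ — hence the necessity of smooth fit. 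A secondary technical point is justifying the interchange of $\int_0^\infty dF(r)$ with the limit $h\searrow0$ and with the expectations: this is where the standing integrability assumptions on $C(x,r)$ and the finite second moment of $F$ in the Assumption are used, via dominated convergence. Beyond these, everything else is bookkeeping with Proposition \ref{dist-for-Lambda:prop}, Proposition \ref{centralProposition}, and the strong Markov property.
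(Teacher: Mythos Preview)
Your overall strategy matches the paper's: split by the location of $x$ relative to $\ubar x$ and $\bar x$, localize via the strong Markov property and the $\diamond$-construction on $[0,\tau_h]$, and show that conditions \eqref{mainthmcond1}--\eqref{mainthmcond4} control the sign of the first-order contribution. The paper organizes this via the decomposition
\[
J_{\hat\tau}(x)-J_{\hat\tau\diamond\tau^{\Psi,\mathcal X,D}(h)}(x)
=\bigl(J_{\hat\tau}(x)-J_{\hat\tau\circ\theta_{\tau_h}+\tau_h}(x)\bigr)
+\bigl(J_{\hat\tau\circ\theta_{\tau_h}+\tau_h}(x)-J_{\hat\tau\diamond\tau^{\Psi,\mathcal X,D}(h)}(x)\bigr),
\]
which separates the effect of $\hat\tau$'s own randomization from that of the deviation $\Psi$; this is not essential but makes the case analysis cleaner than your direct approach.

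There is one genuine error in your proposal that you should correct before writing out details. You claim that $\E_x(l_{\tau_h}^{\ubar x})$ is ``of the same order as $\E_x(\tau_h)$.'' It is not: one has $\bigl(\E_x(l_{\tau_h}^{x})\bigr)^2/\E_x(\tau_h)\to\sigma^2(x)$, so $\E_x(l_{\tau_h}^{x})$ is of order $\sqrt{\E_x(\tau_h)}$, which \emph{dominates} $\E_x(\tau_h)$ as $h\searrow 0$. This has two consequences you do not mention. First, whenever the current state $x$ coincides with a local-time push point $x_i\in\mathcal X$ of the deviation (or with $\ubar x$ for $\hat\tau$), the quotient in \eqref{eqdef2} need not converge; the paper handles this by first computing the limit against $\sqrt{\E_x(\tau_h)}$ and observing that on $(\alpha,\ubar x)$ condition \eqref{mainthmcond1} forces this limit to be strictly negative, so the quotient against $\E_x(\tau_h)$ tends to $-\infty$ (still $\le 0$). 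Second, at $x=\ubar x$ the leading-order (i.e.\ $\sqrt{\E_x(\tau_h)}$) contribution from local time has coefficient proportional to $g(\ubar x)-J_{\hat\tau}(\ubar x)$, which vanishes by \eqref{mainthmcond2}; only then does one pass to the $\E_x(\tau_h)$ scale, where the smooth fit \eqref{mainthmcond4} enters exactly as you describe, killing the term involving $J'_{\hat\tau}-g'$ in the It\^o--Tanaka/local time--space expansion (the paper does this via Cauchy--Schwarz and It\^o isometry). With this scaling corrected, your plan goes through.
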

\begin{remark}  Theorem \ref{main_thm} can be generalized to regard local time pushed mixed equilibrium stopping times (Definition \ref{def:mix_strat}) of more complicated structure than those of mixed and pure threshold stopping times $\tau^{\Lambda,\ubar{x},\bar{x}}$ (Definition \ref{def:mixed_threshold_stopping_time}) using the techniques of the present paper. We have chosen the current formulation for ease of exposition.
\end{remark}
 
The remainder of this subsection will be devoted to proving Theorem \ref{main_thm}. The proof approach relies largely on rewriting the expression in the left hand side of the equilibrium condition \eqref{eqdef2}, when considering $\hat \tau = \tau^{\Lambda,\ubar{x},\bar{x}}$, according to  
\begin{align}
\label{split-up-NE-cond}
\begin{split}
&\frac{J_{\tau^{\Lambda,\ubar{x},\bar{x}}}(x)-J_{\tau^{\Lambda,\ubar{x},\bar{x}}\diamond \tau^{\Psi,\mathcal{X},D}(h)}(x)}{\E_x(\tau_h)}\\
&\enskip =\frac{J_{\tau^{\Lambda,\ubar{x},\bar{x}}}(x)-J_{\tau^{\Lambda,\ubar{x},\bar{x}}\circ \theta_{\tau_h}+\tau_h}(x)}{\E_x(\tau_h)}+\frac{J_{\tau^{\Lambda,\ubar{x},\bar{x}}\circ \theta_{\tau_h}+\tau_h}(x)-J_{\tau^{\Lambda,\ubar{x},\bar{x}}\diamond \tau^{\Psi,\mathcal{X},D}(h)}(x)}{\E_x(\tau_h)}
\end{split}
\end{align}
and then studying what happens to these terms separately when sending $h\searrow 0$ for different values of $x$, in the case $x \in D$. Relying on these results we show that conditions \eqref{mainthmcond1}--\eqref{mainthmcond4} are sufficient for $\tau^{\Lambda,\ubar{x},\bar{x}}$ to be an equilibrium at the end of this subsection. The case $x\notin D$ turns out to be trivial.

When considering stopping strategies with an immediate local time push 
we will see that the left hand side of the equilibrium condition \eqref{eqdef2} may diverge. 
In order to investigate the nature of this convergence (cf. Proposition \ref{prop_CD_local} below) we need the following result, where the denominator has been replaced with the larger term $\sqrt{\E_x(\tau_h)}$ in order to ensure convergence.

\begin{lemma} \label{main_lemma_local}  For any $\tau^{\Lambda,\ubar{x},\bar{x}},\tau^{\Psi,\mathcal{X},D} \in \mathcal{N}$ and $x = x_i\in \mathcal{X}$, it holds that
\begin{align*}
\lim_{h\searrow 0}&\frac{J_{\tau^{\Lambda,\ubar{x},\bar{x}}\circ \theta_{\tau_h}+\tau_h}(x)-J_{\tau^{\Lambda,\ubar{x},\bar{x}} \diamond \tau^{\Psi,\mathcal{X},D}(h)}(x)}{\sqrt{\E_x(\tau_h)}}
=\psi_i\sigma(x)(J_{\tau^{\Lambda,\ubar{x},\bar{x}}}(x)-g(x)).
\end{align*}
Moreover, for any $\tau^{\Lambda,\ubar{x},\bar{x}},\tau^{\Psi,\mathcal{X},D} \in \mathcal{N}$ and $x\in (\alpha,\bar{x})\cap D$,
\begin{align*}
\lim_{h\searrow 0}\frac{J_{\tau^{\Lambda,\ubar{x},\bar{x}}}(x)-J_{\tau^{\Lambda,\ubar{x},\bar{x}}\diamond \tau^{\Psi,\mathcal{X},D}(h)}(x)}{\sqrt{\E_x(\tau_h)}} = \sigma(x)\left(I_{\{x=\ubar{x}\}}\lambda-\sum_{i=1}^nI_{\{x=x_i\}}\psi_i\right)\left(g(x)-J_{\tau^{\Lambda,\ubar{x},\bar{x}}}(x)\right).
\end{align*}
\end{lemma}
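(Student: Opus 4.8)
The plan is to reduce everything, via \eqref{J-in-terms-w}, to the auxiliary functions $w_\tau(x,r)$ of \eqref{w_eq} for fixed $r$, and to integrate against $dF(r)$ only at the end. Write $\hat\tau=\tau^{\Lambda,\ubar{x},\bar{x}}$ and take $h$ so small that $(x-h,x+h)$ lies in the relevant open set — inside $D$, below $\bar{x}$, and containing no $x_j\in\mathcal{X}$ with $x_j\neq x$ — so that before $\tau_h$ the process avoids $\partial D$, the point $\bar{x}$, and the other push points; in particular $\tau^{\Psi,\mathcal{X},D}\wedge\tau_h=\tau^{\Psi,\mathcal{X}}\wedge\tau_h$ and $\tau^{\Lambda,\ubar{x},\bar{x}}\wedge\tau_h=\tau^\Lambda\wedge\tau_h$. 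The engine is a one-step ``restart'' identity, obtained from the strong Markov property (using the shift $\theta$ from the Assumption) and the memorylessness of the exponential clock $U$ — so that, conditioned on surviving past $\tau_h$, the residual intensity threshold is again standard exponential: with $\mathrm{cost}_r(\rho):=\int_0^\rho e^{-rs}f(X_s)\,ds+e^{-r\rho}g(X_\rho)$ one has
\begin{align*}
w_{\hat\tau\circ\theta_{\tau_h}+\tau_h}(x,r)=\E_x\Big(\ts\int_0^{\tau_h}e^{-rs}f(X_s)\,ds+e^{-r\tau_h}w_{\hat\tau}(X_{\tau_h},r)\Big),
\end{align*}
while $w_{\hat\tau}(x,r)$ and $w_{\hat\tau\diamond\tau^{\Psi,\mathcal{X},D}(h)}(x,r)$ each split into a contribution from $\{\,\cdot\le\tau_h\}$, where stopping happens at $\tau^\Lambda$ resp. $\tau^\Psi$ with value $\mathrm{cost}_r(\tau^\Lambda)$ resp. $\mathrm{cost}_r(\tau^\Psi)$, plus a contribution from $\{\,\cdot>\tau_h\}$ equal to the same running-cost-plus-$e^{-r\tau_h}w_{\hat\tau}(X_{\tau_h},r)$ displayed above.

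For the first statement I would subtract the splitting of $w_{\hat\tau\diamond\tau^{\Psi,\mathcal{X},D}(h)}$ from that of $w_{\hat\tau\circ\theta_{\tau_h}+\tau_h}$, cancelling the common running-cost term on $\{\tau^\Psi>\tau_h\}$, to obtain
\begin{align*}
w_{\hat\tau\circ\theta_{\tau_h}+\tau_h}(x,r)-w_{\hat\tau\diamond\tau^{\Psi,\mathcal{X},D}(h)}(x,r)=\E_x\Big(I_{\{\tau^\Psi\le\tau_h\}}\big(\ts\int_{\tau^\Psi}^{\tau_h}e^{-rs}f(X_s)\,ds+e^{-r\tau_h}w_{\hat\tau}(X_{\tau_h},r)-e^{-r\tau^\Psi}g(X_{\tau^\Psi})\big)\Big).
\end{align*}
On $\{\tau^\Psi\le\tau_h\}$ one has $\tau^\Psi,\tau_h\to0$ and $\sup_{s\le\tau_h}|X_s-x|\le h$, so by continuity of $w_{\hat\tau}(\cdot,r)$ (admissibility) and of $g$ the bracket equals $w_{\hat\tau}(x,r)-g(x)$ up to an error that is, on that event, at most $C_r\tau_h+B^r_h$ with $B^r_h\to0$; hence the right-hand side is $(w_{\hat\tau}(x,r)-g(x))\,\mathbb{P}_x(\tau^\Psi\le\tau_h)+o(\sqrt{\E_x(\tau_h)})$ once $\mathbb{P}_x(\tau^\Psi\le\tau_h)/\sqrt{\E_x(\tau_h)}$ is known to be bounded. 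By Proposition \ref{dist-for-Lambda:prop}, $\mathbb{P}_x(\tau^\Psi\le\tau_h)=\E_x(1-e^{-\Psi_{\tau_h}})$; using $1-e^{-z}=z+O(z^2)$ and $\E_x(\Psi_{\tau_h}^2)=O(h^2)$ this is $\E_x(\Psi_{\tau_h})+O(h^2)$, and by the choice of $h$ only the local-time term at $x=x_i$ survives, so it equals $\psi_i\E_x(l^{x_i}_{\tau_h})+O(h^2)$. From $l^{x_i}_t=|X_t-x_i|-|X_0-x_i|-\int_0^t\mathrm{sgn}(X_s-x_i)\,dX_s$ one gets $\E_x(l^{x_i}_{\tau_h})=h+O(\E_x(\tau_h))$, and the standard small-ball estimates $\E_x(\tau_h)=h^2/\sigma^2(x)+o(h^2)$, $\E_x(\tau_h^2)=O(h^4)$ for the non-degenerate diffusion \eqref{the-diffusion} give $\mathbb{P}_x(\tau^\Psi\le\tau_h)/\sqrt{\E_x(\tau_h)}\to\sigma(x)\psi_i$. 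Dividing by $\sqrt{\E_x(\tau_h)}$, sending $h\searrow0$, and integrating over $dF(r)$ yields $\psi_i\sigma(x)(J_{\hat\tau}(x)-g(x))$.

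For the second statement I would start from the decomposition \eqref{split-up-NE-cond} (with $\sqrt{\E_x(\tau_h)}$ as denominator). Its second summand is precisely the quantity of the first statement; the same computation — which used only $(x-h,x+h)\subset(\alpha,\bar{x})\cap D$ — shows its limit is $\sigma(x)\big(\sum_iI_{\{x=x_i\}}\psi_i\big)(J_{\hat\tau}(x)-g(x))$ (equal to $0$ for $x\notin\mathcal{X}$, since then $\Psi_{\tau_h}=\int_0^{\tau_h}\psi(X_s)\,ds=O(\E_x(\tau_h))$). For the first summand, subtracting the restart identity from the splitting of $w_{\hat\tau}$ and cancelling the running-cost term on $\{\tau^\Lambda>\tau_h\}$ leaves
\begin{align*}
w_{\hat\tau}(x,r)-w_{\hat\tau\circ\theta_{\tau_h}+\tau_h}(x,r)=\E_x\Big(I_{\{\tau^\Lambda\le\tau_h\}}\big(e^{-r\tau^\Lambda}g(X_{\tau^\Lambda})-e^{-r\tau_h}w_{\hat\tau}(X_{\tau_h},r)-\ts\int_{\tau^\Lambda}^{\tau_h}e^{-rs}f(X_s)\,ds\big)\Big),
\end{align*}
which by the same reasoning equals $(g(x)-w_{\hat\tau}(x,r))\,\mathbb{P}_x(\tau^\Lambda\le\tau_h)+o(\sqrt{\E_x(\tau_h)})$; since $\Lambda$ carries only the local-time term at $\ubar{x}$ with coefficient $\lambda$, here $\mathbb{P}_x(\tau^\Lambda\le\tau_h)/\sqrt{\E_x(\tau_h)}\to\sigma(x)\lambda I_{\{x=\ubar{x}\}}$. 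Adding the two limits, integrating over $dF(r)$, and using $\int_0^\infty w_{\hat\tau}(x,r)\,dF(r)=J_{\hat\tau}(x)$ gives $\sigma(x)\big(I_{\{x=\ubar{x}\}}\lambda-\sum_iI_{\{x=x_i\}}\psi_i\big)(g(x)-J_{\hat\tau}(x))$, as claimed.

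The step I expect to be the main obstacle is the interchange $\lim_{h\searrow0}\int_0^\infty(\cdot)\,dF(r)=\int_0^\infty\lim_{h\searrow0}(\cdot)\,dF(r)$ together with making the errors $C_r,B^r_h$ above uniform enough in $r$. This is exactly where the Assumption is used: for admissible $\hat\tau$ one has $0\le w_{\hat\tau}(y,r)\le C(y,r)$ with $y\mapsto\int_0^\infty C(y,r)\,dF(r)$ (and the analogue with $rC(y,r)$) locally bounded and $F$ of finite second moment, while $g$ is bounded and $f$ is locally bounded; combined with the already-proved bound $\mathbb{P}_x(\tau^\Psi\le\tau_h)\le\mathrm{const}\cdot\sqrt{\E_x(\tau_h)}$ this yields, for small $h$, a $dF$-integrable bound on $|w_{\hat\tau\circ\theta_{\tau_h}+\tau_h}(x,\cdot)-w_{\hat\tau\diamond\tau^{\Psi,\mathcal{X},D}(h)}(x,\cdot)|/\sqrt{\E_x(\tau_h)}$ which is convergent in $L^1(dF)$, so that a generalized (Pratt-type) dominated convergence theorem applies; the $\Lambda$-term is handled identically. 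The remaining inputs are routine: the restart identities from the strong Markov property, the shift $\theta$ and the memorylessness of $U$; and the small-ball moment estimates for $\tau_h$ from solving $\mathbf{A}u=-1$ and $\mathbf{A}v=-2u$ on $(x-h,x+h)$ with zero boundary data, using continuity of $\mu,\sigma$ and $\sigma>0$.
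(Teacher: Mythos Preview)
Your argument is correct, but the route differs from the paper's in one key step. After reaching the common starting point
\[
\E_x\Big(I_{\{\tau^\Psi\le\tau_h\}}\big(e^{-r\tau_h}w_{\hat\tau}(X_{\tau_h},r)+\ts\int_{\tau^\Psi}^{\tau_h}e^{-rs}f(X_s)\,ds-e^{-r\tau^\Psi}g(X_{\tau^\Psi})\big)\Big),
\]
the paper does not factor the bracket as ``leading constant $+$ remainder''. Instead it conditions on $\mathcal{F}^X_\infty$ (in the spirit of Proposition~\ref{centralProposition}) to rewrite this as $\E_x\big(\int_0^{\tau^\Psi\wedge\tau_h}[\cdots]\,d\Psi_t\big)$, then splits $d\Psi_t=\psi(X_t)\,dt+\psi_i\,dl_t^x$ into a Lebesgue part $(A)$ and a local-time part $(B)$, and handles each term directly via the prepared moment asymptotics of Lemmas~\ref{lemma-tau-h-oct} and~\ref{prop_limit_local_time_time_tau-h} (e.g.\ $\E_x(\tau_h l^x_{\tau_h})/\sqrt{\E_x(\tau_h)}\to 0$ and $\E_x(l^x_{\tau^\Psi\wedge\tau_h})/\sqrt{\E_x(\tau_h)}\to\sigma(x)$). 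Your approach instead reduces everything to the single quantity $\mathbb{P}_x(\tau^\Psi\le\tau_h)/\sqrt{\E_x(\tau_h)}$ and computes it via a second-order Taylor expansion of $\E_x(1-e^{-\Psi_{\tau_h}})$; this is more direct and arguably more transparent. A second difference is that the paper keeps the $\int_0^\infty\cdots\,dF(r)$ integral from the outset, so all bounds are expressed directly in terms of $J_{\hat\tau}$ and $\int rw_{\hat\tau}(y,r)\,dF(r)$ and no limit--integral interchange is ever needed; you work with fixed $r$ and integrate at the end, which is why you (correctly) identify the Pratt/DCT step as the main technical point and invoke the local boundedness and moment assumptions on $F$ to justify it. For the second statement both proofs use the same splitting \eqref{split-up-NE-cond} together with the identity $\hat\tau\diamond\hat\tau(h)=\hat\tau$ (your ``memorylessness of $U$'' is precisely \eqref{sec_resul_lambda}).
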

\begin{proof} The set $D$ is open by definition. Hence, for every $x\in D$, there exists a $\bar{h}$ such that $[x-\bar{h},x+\bar{h}]\subset D$. Hence, we find for $0<h\leq\bar{h}$ that $\tau_h< \tau^D$ and
\begin{align*}
\tau^{\Lambda,\ubar{x},\bar{x}} \diamond \tau^{\Psi,\mathcal{X},D}(h) 
& = I_{\{\tau^{\Psi,\mathcal{X},D} \leq \tau_h\}}\tau^{\Psi,\mathcal{X},D} + I_{\{\tau^{\Psi,\mathcal{X},D} > \tau_h\}}({\tau^{\Lambda,\ubar{x},\bar{x}}}\circ \theta_{\tau_h}+\tau_h)\\
& = I_{\{\tau^{\Psi,\mathcal{X}} \leq \tau_h\}}\tau^{\Psi,\mathcal{X}} + I_{\{\tau^{\Psi,\mathcal{X}} > \tau_h\}}({\tau^{\Lambda,\ubar{x},\bar{x}}}\circ \theta_{\tau_h}+\tau_h).
\end{align*} 
For the rest of the proof we write $\tau^\Psi$ instead of $\tau^{\Psi,\mathcal{X}}$. Consider $x=x_i$ for some $i$. Using the previous observation, Lemma \ref{mp_lemma} (in the appendix), the strong Markov property, conditioning on the whole path of $X$, and arguments similar to those of the proof of Proposition \ref{centralProposition}, we obtain
\begin{align*}
&J_{\tau^{\Lambda,\ubar{x},\bar{x}}\circ \theta_{\tau_h}+\tau_h}(x)-J_{\tau^{\Lambda,\ubar{x},\bar{x}} \diamond \tau^{\Psi,\mathcal{X},D}(h)}(x)\\
%
%
&=\int_0^\infty\E_x\left(I_{\{\tau^\Psi\leq \tau_h\}}\left(e^{-r\tau_h}w_{\tau^{\Lambda,\ubar{x},\bar{x}}}(X_{\tau_h},r)+\int_{\tau^\Psi}^{\tau_h} e^{-rs}f(X_s)ds-e^{-r\tau^\Psi}g(X_{\tau^\Psi})
\right)\right)dF(r)\\
\enskip 
&=\int_0^\infty\E_x\left(\int_0^{\tau_h}\left(e^{-r\tau_h}w_{\tau^{\Lambda,\ubar{x},\bar{x}}}(X_{\tau_h},r)+\int_{t}^{\tau_h} e^{-rs}f(X_s)ds-e^{-rt}g(X_t)
\right)d\mathbb{P}_x\left( \tau^\Psi  \leq t \enskip \vline \enskip {\cal F} ^X_\infty  \right)\right)dF(r)\\
\enskip &=\int_0^\infty\E_x\left(\int_0^{\tau^\Psi\wedge\tau_h}\left(e^{-r\tau_h}w_{\tau^{\Lambda,\ubar{x},\bar{x}}}(X_{\tau_h},r)+\int_{t}^{\tau_h} e^{-rs}f(X_s)ds-e^{-rt}g(X_t)
\right)d\Psi_t\right)dF(r)\\
&\enskip= 
\int_0^\infty\E_x\left(\int_0^{\tau^\Psi\wedge\tau_h}\psi(X_t)\left(e^{-r\tau_h}w_{\tau^{\Lambda,\ubar{x},\bar{x}}}(X_{\tau_h},r)+\int_{t}^{\tau_h} e^{-rs}f(X_s)ds-e^{-rt}g(X_t)\right)dt\right)dF(r)\\
&\quad+\int_0^\infty\E_x\left(\int_0^{\tau^\Psi\wedge\tau_h}\psi_i\left(e^{-r\tau_h}w_{\tau^{\Lambda,\ubar{x},\bar{x}}}(X_{\tau_h},r)+\int_{t}^{\tau_h} e^{-rs}f(X_s)ds-e^{-rt}g(X_t)
\right)dl_t^x\right)dF(r)\\
&\enskip=(A)+(B),
\end{align*}
where $\psi_i$ is the local push factor corresponding to $x$. 
Note that 
\begin{align*}
(A) 
& \leq \int_0^\infty\E_x\left(\int_0^{\tau^\Psi\wedge\tau_h}\psi(X_t)\left(w_{\tau^{\Lambda,\ubar{x},\bar{x}}}(X_{\tau_h},r)+\int_0^{\tau_h} f(X_s)ds  + g(X_t)\right)dt\right)dF(r).
\end{align*}
Hence, using also that the integrands are bounded (on the stochastic interval $[0,\tau_h]$), Lemma \ref{lemma-tau-h-oct} and \eqref{J-in-terms-w}, we obtain 
\begin{align*}
\frac{|(A)|}{\sqrt{\E_x(\tau_h)}}\leq \sup_{y_1,y_2,y_3,y_4\in [x-h,x+h]}\psi(y_1)\left(\left(J_{\tau^{\Lambda,\ubar{x},\bar{x}}}(y_2)+g(y_3)\right)
\frac{\E_x(\tau_h)}{\sqrt{\E_x(\tau_h)}}+f(y_4)\frac{\E_x(\tau_h^2)}{\sqrt{\E_x(\tau_h)}}\right)\rightarrow 0,
\end{align*}
as $h \searrow 0$.
%
%
%
In $(B)$, we see using similar arguments and Lemma \ref{prop_limit_local_time_time_tau-h} that the middle term satisfies
\begin{align*}
\lim_{h\searrow 0}\frac{\left| \int_0^\infty \E_x\left( \int_0^{\tau^\Psi\wedge\tau_h}\left(\psi_i\int_{t}^{\tau_h} e^{-rs}f(X_s)ds\right)dl_t^x\right) dF(r) \right|}{\sqrt{\E_x(\tau_h)}}
& \leq \lim_{h\searrow 0}\sup_{y\in[x-h,x+h]}\frac{\psi_if(y)\E_x\left(\tau_hl_{\tau_h}^x\right)}{\sqrt{\E_x(\tau_h)}}\\
& =0.
\end{align*}
For the first term in (B), we write  
\begin{align*}
& \int_0^\infty  \E_x\left(\int_0^{\tau^\Psi\wedge\tau_h}\psi_ie^{-r\tau_h}w_{\tau^{\Lambda,\ubar{x},\bar{x}}}(X_{\tau_h},r)dl_t^x\right)dF(r)\\
&\enskip= \int_0^\infty \E_x\left(\int_0^{\tau^\Psi\wedge\tau_h}\psi_i(e^{-r\tau_h}-1)w_{\tau^{\Lambda,\ubar{x},\bar{x}}}(X_{\tau_h},r)dl_t^x\right) dF(r)
+\int_0^\infty \E_x\left(\int_0^{\tau^\Psi\wedge\tau_h}\psi_iw_{\tau^{\Lambda,\ubar{x},\bar{x}}}(X_{\tau_h},r)dl_t^x\right)dF(r)\\
&\enskip= \int_0^\infty \E_x\left(\int_0^{\tau^\Psi\wedge\tau_h}\psi_i(e^{-r\tau_h}-1)w_{\tau^{\Lambda,\ubar{x},\bar{x}}}(X_{\tau_h},r)dl_t^x\right) dF(r)
+ \E_x\left(\int_0^{\tau^\Psi\wedge\tau_h}\psi_iJ_{\tau^{\Lambda,\ubar{x},\bar{x}}}(X_{\tau_h})dl_t^x\right)\\
&\enskip=(B1)+(B2).
\end{align*}
We now use $|e^{-y}-1|\leq y$ for $y\geq 0$, to obtain 
\begin{align*}
 \lim_{h\searrow 0}\frac{|(B1)|}{\sqrt{\E_x(\tau_h)}}\leq \lim_{h\searrow 0}\sup_{y\in[x-h,x+h]}
\psi_i \int_0^\infty r w_{\tau^{\Lambda,\ubar{x},\bar{x}}}(y,r)dF(r)\frac{\E_x\left(\tau_hl_{\tau_h}^x\right)}{\sqrt{\E_x(\tau_h)}}=0,
 \end{align*}
by Lemma \ref{prop_limit_local_time_time_tau-h}. For $(B2)$ we obtain, again using Lemma \ref{prop_limit_local_time_time_tau-h}, 
\begin{align*}
&\left|\frac{(B2)}{\sqrt{\E_x(\tau_h)}}-\psi_i\sigma(x)J_{\tau^{\Lambda,\ubar{x},\bar{x}}}(x)\right|\\
&\enskip = 
\left|\psi_i\frac{\E_x\left(\int_0^{\tau^\Psi\wedge\tau_h} 
\left(J_{\tau^{\Lambda,\ubar{x},\bar{x}}}(X_{\tau_h})	- \frac{\sqrt{\E_x(\tau_h)}\sigma(x)J_{\tau^{\Lambda,\ubar{x},\bar{x}}}(x)}{\E_x\left(l_{\tau^\Psi\wedge\tau_h}^x\right)}\right) dl_t^x\right)}{\sqrt{\E_x(\tau_h)}}\right|
\\&\enskip\leq
\psi_i \sup_{y\in[x-h,x+h]}\left|J_{\tau^{\Lambda,\ubar{x},\bar{x}}}(y)-J_{\tau^{\Lambda,\ubar{x},\bar{x}}}(x)\frac{\sigma(x) \sqrt{\E_x(\tau_h)}  }{\E_x\left(l_{\tau^\Psi\wedge\tau_h}^x\right)}\right|\left(\frac{\E_x(l_{\tau^\Psi\wedge\tau_h}^x)}{\sqrt{\E_x(\tau_h)}}\right)\rightarrow 0,
\end{align*}
as $h\searrow 0$. Hence, 
\begin{align*}
\lim_{h\searrow 0}\frac{(B2)}{\sqrt{\E_x(\tau_h)}}
=\psi_i\sigma(x)J_{\tau^{\Lambda,\ubar{x},\bar{x}}}(x). 
\end{align*}
Analogously, the limit of the last term in (B) satisfies 
\begin{align*}
\lim_{h\searrow 0}\frac{-\int_0^\infty \E_x\left(\int_0^{\tau^\Psi\wedge\tau_h}\psi_ie^{-rt}g(X_t)
dl_t^x\right)dF(r)}{\sqrt{\E_x(\tau_h)}}=-\psi_i\sigma(x)g(x).
\end{align*}
Adding of the limits 
yields, when recalling \eqref{J-in-terms-w}, the first result.
The second result can be proved by splitting up the expression in the limit according to \eqref{split-up-NE-cond} and then applying the first result separately to the split up terms, which relies on the observation that
\begin{align} \label{sec_resul_lambda}
\tau^{\Lambda,\ubar{x},\bar{x}} \diamond \tau^{\Lambda,\ubar{x},\bar{x}}(h) 
=\tau^{\Lambda,\ubar{x},\bar{x}}.
\end{align} 
\end{proof}

\begin{lemma} \label{main_lemma}  For any $\tau^{\Lambda,\ubar{x},\bar{x}},\tau^{\Psi,\mathcal{X},D} \in \mathcal{N}$ and $x \in D \backslash  \mathcal{X}$, it holds that
\begin{align*}
\lim_{h\searrow 0} \frac{J_{\tau^{\Lambda,\ubar{x},\bar{x}}\circ \theta_{\tau_h}+\tau_h}(x)-J_{\tau^{\Lambda,\ubar{x},\bar{x}} \diamond \tau^{\Psi,\mathcal{X},D}(h)}(x)}{\E_x(\tau_h)}
=\frac{1}{2}(\psi(x-)+\psi(x))(J_{\tau^{\Lambda,\ubar{x},\bar{x}}}(x)-g(x)).
\end{align*}
 Moreover, for $x\in (\alpha,\bar{x}) \backslash \{\ubar{x}\}$, 
\begin{align*}
\lim_{h\searrow 0}\frac{J_{\tau^{\Lambda,\ubar{x},\bar{x}}\circ \theta_{\tau_h}+\tau_h}(x)-J_{\tau^{\Lambda,\ubar{x},\bar{x}}}(x)}{\E_x(\tau_h)}=\frac{1}{2}(\lambda(x-)+\lambda(x))\left(J_{\tau^{\Lambda,\ubar{x},\bar{x}}}(x)-g(x)\right).
\end{align*}
\end{lemma}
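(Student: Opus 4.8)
\emph{Proof plan.}
The plan is to mirror the proof of Lemma~\ref{main_lemma_local}, but to divide by $\E_x(\tau_h)$ rather than $\sqrt{\E_x(\tau_h)}$ and to exploit that, since $x\notin\mathcal X$, the singular local-time contribution is absent for small $h$, so that a genuine finite limit survives. Fix $x\in D\setminus\mathcal X$ and choose $\bar h>0$ with $[x-\bar h,x+\bar h]\subset D$ and $x_i\notin[x-\bar h,x+\bar h]$ for every $i$. For $0<h\le\bar h$ we then have $\tau_h<\tau^D$ and $l^{x_i}_{\tau_h}=0$ for all $i$, hence $\tau^{\Lambda,\ubar{x},\bar{x}}\diamond\tau^{\Psi,\mathcal X,D}(h)=\tau^{\Lambda,\ubar{x},\bar{x}}\diamond\tau^{\Psi,\mathcal X}(h)$, and the computation carried out in the proof of Lemma~\ref{main_lemma_local} (strong Markov property, conditioning on ${\cal F}^X_\infty$, Proposition~\ref{dist-for-Lambda:prop}) gives that the numerator equals
$$\int_0^\infty\E_x\left(\int_0^{\tau^\Psi\wedge\tau_h}\psi(X_t)\left(e^{-r\tau_h}w_{\tau^{\Lambda,\ubar{x},\bar{x}}}(X_{\tau_h},r)+\int_t^{\tau_h}e^{-rs}f(X_s)ds-e^{-rt}g(X_t)\right)dt\right)dF(r),$$
the $dl_t^x$-term occurring there being now identically zero because $\Psi_t=\int_0^t\psi(X_s)ds$ on $[0,\tau_h]$.

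Next I would discard the lower-order pieces. Using $|e^{-r\tau_h}-1|\le r\tau_h$, $0\le e^{-rt}\le1$, the local boundedness of $\psi$ (finitely many discontinuities), $f$ and $g$, and the bound $w_{\tau^{\Lambda,\ubar{x},\bar{x}}}(\cdot,r)\le C(\cdot,r)$ on $[x-h,x+h]$, together with the integrability of $\int_0^\infty C(y,r)dF(r)+\int_0^\infty rC(y,r)dF(r)$ and Lemma~\ref{lemma-tau-h-oct} (which in particular yields $\E_x(\tau_h^2)/\E_x(\tau_h)\to0$), the contributions of $(e^{-r\tau_h}-1)w_{\tau^{\Lambda,\ubar{x},\bar{x}}}(X_{\tau_h},r)$, of $\int_t^{\tau_h}e^{-rs}f(X_s)ds$, and of $(e^{-rt}-1)g(X_t)$ are each $o(\E_x(\tau_h))$. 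Since $\int_0^\infty dF(r)=1$ and $\int_0^\infty w_{\tau^{\Lambda,\ubar{x},\bar{x}}}(X_{\tau_h},r)dF(r)=J_{\tau^{\Lambda,\ubar{x},\bar{x}}}(X_{\tau_h})$ by \eqref{J-in-terms-w}, the numerator reduces to $\E_x\big(\int_0^{\tau^\Psi\wedge\tau_h}\psi(X_t)(J_{\tau^{\Lambda,\ubar{x},\bar{x}}}(X_{\tau_h})-g(X_t))dt\big)+o(\E_x(\tau_h))$; replacing $\tau^\Psi\wedge\tau_h$ by $\tau_h$ costs $o(\E_x(\tau_h))$ as well, since $\mathbb{P}_x(\tau^\Psi\le\tau_h\mid{\cal F}^X_\infty)\le\Psi_{\tau_h}\le(\sup_{[x-h,x+h]}\psi)\,\tau_h$ by Proposition~\ref{dist-for-Lambda:prop} and $\E_x(\tau_h^2)=o(\E_x(\tau_h))$.

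It then remains to evaluate $\lim_{h\searrow0}\E_x(\tau_h)^{-1}\E_x\big(\int_0^{\tau_h}\psi(X_t)(J_{\tau^{\Lambda,\ubar{x},\bar{x}}}(X_{\tau_h})-g(X_t))dt\big)$. Here I would split the time integral over $\{X_t\ge x\}$ and $\{X_t<x\}$: on $[x,x+h]$ one has $\sup|\psi-\psi(x)|\to0$ and on $[x-h,x)$ one has $\sup|\psi-\psi(x-)|\to0$ as $h\searrow0$ (RCLL with finitely many discontinuities), while $J_{\tau^{\Lambda,\ubar{x},\bar{x}}}(X_{\tau_h})\to J_{\tau^{\Lambda,\ubar{x},\bar{x}}}(x)$ and $g(X_t)\to g(x)$ uniformly by continuity. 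Hence the quantity above equals $(J_{\tau^{\Lambda,\ubar{x},\bar{x}}}(x)-g(x))\big(\psi(x)\E_x(\int_0^{\tau_h}I_{\{X_t\ge x\}}dt)+\psi(x-)\E_x(\int_0^{\tau_h}I_{\{X_t<x\}}dt)\big)+o(\E_x(\tau_h))$, and the first assertion follows from the occupation-time asymptotics $\E_x(\int_0^{\tau_h}I_{\{X_t\ge x\}}dt)/\E_x(\tau_h)\to\tfrac12$ and likewise for $\{X_t<x\}$, which hold because, in natural scale, $X$ is near $x$ a time change of Brownian motion and $\mu,\sigma$ are continuous. The second assertion is then immediate: apply the first with $\tau^{\Psi,\mathcal X,D}:=\tau^{\Lambda,\ubar{x},\bar{x}}$ (that is, $\psi:=\lambda$, $\mathcal X:=\{\ubar{x}\}$, $D:=(\alpha,\bar{x})$, which is legitimate since $x\in(\alpha,\bar{x})\setminus\{\ubar{x}\}$) and invoke \eqref{sec_resul_lambda}, $\tau^{\Lambda,\ubar{x},\bar{x}}\diamond\tau^{\Lambda,\ubar{x},\bar{x}}(h)=\tau^{\Lambda,\ubar{x},\bar{x}}$, so that the two numerators coincide.

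The main obstacle is the symmetric occupation-time limit $\E_x(\int_0^{\tau_h}I_{\{X_t\ge x\}}dt)/\E_x(\tau_h)\to\tfrac12$ --- equivalently, that to leading order as $h\searrow0$ the diffusion spends equal expected time above and below its starting point before leaving $[x-h,x+h]$ --- which must be established (a short computation via the scale function, close in spirit to the appendix estimates on $\E_x(\tau_h)$ and $\E_x(\tau_h^2)$). Everything else is the bookkeeping of error terms already encountered in Lemma~\ref{main_lemma_local} together with dominated convergence in $dF(r)$; note that it is precisely this $\tfrac12$--$\tfrac12$ split, combined with the one-sided limits $\psi(x\pm)$, that produces the average $\tfrac12(\psi(x-)+\psi(x))$ appearing in the statement.
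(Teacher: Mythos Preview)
Your proposal is correct and follows essentially the same route as the paper's proof: the same decomposition of the numerator via the strong Markov property and Proposition~\ref{dist-for-Lambda:prop}, the same elimination of lower-order terms using $|1-e^{-y}|\le y$ and $\E_x(\tau_h^2)/\E_x(\tau_h)\to0$, and the same reduction of the second assertion to the first via \eqref{sec_resul_lambda} with $D=(\alpha,\bar x)$. The only cosmetic difference is that the paper keeps the density form $\psi(X_t)e^{-\int_0^t\psi(X_s)ds}dt$ integrated to $\tau_h$ and shows the exponential factor is asymptotically $1$, whereas you carry the upper limit $\tau^\Psi\wedge\tau_h$ and replace it by $\tau_h$; these are equivalent bookkeeping choices handling the same error. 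Where the paper simply invokes ``basic properties of diffusions'' to obtain $\lim_{h\searrow0}\E_x(\tau_h)^{-1}\E_x\big(\int_0^{\tau_h}\psi(X_t)g(X_t)dt\big)=\tfrac12(\psi(x-)+\psi(x))g(x)$, you are more explicit in isolating the symmetric occupation-time asymptotics $\E_x\big(\int_0^{\tau_h}I_{\{X_t\ge x\}}dt\big)/\E_x(\tau_h)\to\tfrac12$ as the crucial ingredient; this is indeed the content hidden behind the paper's one-line appeal, and your sketch of how to obtain it (local time change to Brownian motion, continuity of $\mu,\sigma$) is the standard way to do so.
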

The proof of Lemma \ref{main_lemma} is relegated to the appendix since it relies on arguments similar to those in the proof of Lemma \ref{main_lemma_local}.

We are now ready to present four propositions which will allow us to verify that the conditions \eqref{mainthmcond1}--\eqref{mainthmcond4} are sufficient for the equilibrium condition \eqref{eqdef2} to hold. 
\begin{proposition}\label{prop_CD_local}
For any $\tau^{\Lambda,\ubar{x},\bar{x}},\tau^{\Psi,\mathcal{X},D} \in \mathcal{N}$ and $x\in (\alpha,\ubar{x})\cap \mathcal{X}$ satisfying 
\begin{align*}
J_{\tau^{\Lambda,\ubar{x},\bar{x}}}(x)<g(x),
\end{align*}
 it holds that
\begin{align*}
 \lim_{h\searrow 0}\frac{J_{\tau^{\Lambda,\ubar{x},\bar{x}}}(x)-J_{\tau^{\Lambda,\ubar{x},\bar{x}}\diamond \tau^{\Psi,\mathcal{X},D}(h)}(x)}{\E_x(\tau_h)}=-\infty.
\end{align*}
\end{proposition}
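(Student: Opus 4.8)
The plan is to obtain the statement almost directly from the second part of Lemma~\ref{main_lemma_local}, combined with the elementary fact that $\E_x(\tau_h)\to 0$ as $h\searrow 0$. The point is that an immediate local time push at a point carrying a push factor $\psi_i>0$ makes the relevant difference quotient vanish only at the slower rate $\sqrt{\E_x(\tau_h)}$ rather than $\E_x(\tau_h)$, so after dividing by $\E_x(\tau_h)$ one is left with a strictly negative constant multiplied by a factor blowing up to $+\infty$.

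Concretely, I would first check that the hypotheses of the second part of Lemma~\ref{main_lemma_local} are met: since $x\in(\alpha,\ubar{x})\cap\mathcal{X}$ we have $x=x_i$ for exactly one index $i$, with associated push factor $\psi_i\in(0,\infty)$; moreover $x<\ubar{x}$ gives $I_{\{x=\ubar{x}\}}=0$, and $\mathcal{X}\subset D$ together with $(\alpha,\ubar{x})\subset(\alpha,\bar{x})$ gives $x\in(\alpha,\bar{x})\cap D$. Applying that lemma therefore yields
\[
\lim_{h\searrow 0}\frac{J_{\tau^{\Lambda,\ubar{x},\bar{x}}}(x)-J_{\tau^{\Lambda,\ubar{x},\bar{x}}\diamond \tau^{\Psi,\mathcal{X},D}(h)}(x)}{\sqrt{\E_x(\tau_h)}}=-\psi_i\sigma(x)\bigl(g(x)-J_{\tau^{\Lambda,\ubar{x},\bar{x}}}(x)\bigr)=:c.
\]
By the standing assumption $J_{\tau^{\Lambda,\ubar{x},\bar{x}}}(x)<g(x)$ and the positivity $\psi_i>0$, $\sigma(x)>0$, the number $c$ is a \emph{strictly} negative real constant. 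Next, using that $\E_x(\tau_h)\to 0$ as $h\searrow 0$ (the standard small-interval exit-time estimate from the appendix, cf. Lemma~\ref{lemma-tau-h-oct}), we have $1/\sqrt{\E_x(\tau_h)}\to+\infty$, and writing
\[
\frac{J_{\tau^{\Lambda,\ubar{x},\bar{x}}}(x)-J_{\tau^{\Lambda,\ubar{x},\bar{x}}\diamond \tau^{\Psi,\mathcal{X},D}(h)}(x)}{\E_x(\tau_h)}=\frac{J_{\tau^{\Lambda,\ubar{x},\bar{x}}}(x)-J_{\tau^{\Lambda,\ubar{x},\bar{x}}\diamond \tau^{\Psi,\mathcal{X},D}(h)}(x)}{\sqrt{\E_x(\tau_h)}}\cdot\frac{1}{\sqrt{\E_x(\tau_h)}}
\]
exhibits the left-hand side as a product of a sequence converging to $c<0$ and a sequence diverging to $+\infty$, whence the limit is $-\infty$, as claimed.

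There is essentially no obstacle here: all the analytic work—identifying the $\sqrt{\E_x(\tau_h)}$ convergence rate produced by the singular local-time contribution—has already been done in Lemma~\ref{main_lemma_local}. The only two points deserving a word of care are that the limiting constant $c$ is \emph{strictly} negative (which is exactly where the hypothesis $J_{\tau^{\Lambda,\ubar{x},\bar{x}}}(x)<g(x)$ and the strict positivity of $\psi_i$ and $\sigma(x)$ are used), and that $\E_x(\tau_h)\to 0$, which is the routine exit-time estimate relied upon throughout the appendix.
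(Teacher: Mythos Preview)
Your proof is correct and follows essentially the same route as the paper: invoke the second part of Lemma~\ref{main_lemma_local} to obtain a strictly negative limit with denominator $\sqrt{\E_x(\tau_h)}$, then conclude by noting that division by the smaller quantity $\E_x(\tau_h)$ forces the limit to $-\infty$. The paper's version is simply more terse, writing the limiting constant as $-\sigma(x)\bigl(\sum_{i=1}^n I_{\{x=x_i\}}\psi_i\bigr)\bigl(g(x)-J_{\tau^{\Lambda,\ubar{x},\bar{x}}}(x)\bigr)<0$ and leaving the final factorization implicit.
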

\begin{proof}
By Lemma \ref{main_lemma_local}, we have that
\begin{align*}
 \lim_{h\searrow 0}\frac{J_{\tau^{\Lambda,\ubar{x},\bar{x}}}(x)-J_{\tau^{\Lambda,\ubar{x},\bar{x}}\diamond \tau^{\Psi,\mathcal{X},D}(h)}(x)}{\sqrt{\E_x(\tau_h)}}  
& = -\sigma(x)\left(\sum_{i=1}^nI_{\{x=x_i\}}\psi_i\right)\left(g(x)-J_{\tau^{\Lambda,\ubar{x},\bar{x}}}(x)\right)\\
& < 0.
\end{align*}
The result follows.
\end{proof}

\begin{proposition}\label{prop_CD} 
For any $\tau^{\Lambda,\ubar{x},\bar{x}},\tau^{\Psi,\mathcal{X},D} \in \mathcal{N}$ and 
$x \in (\alpha,\bar{x}) \cap D \backslash (\{\ubar{x}\}\cup \mathcal{X})$, it holds that
\begin{align*}
\lim_{h\searrow 0}\frac{J_{\tau^{\Lambda,\ubar{x},\bar{x}}}(x)-J_{\tau^{\Lambda,\ubar{x},\bar{x}}\diamond \tau^{\Psi,\mathcal{X},D}(h)}(x)}{\E_x(\tau_h)} = \frac{1}{2}(\lambda(x-)+\lambda(x)-\psi(x-)-\psi(
x))\left(g(x)-J_{\tau^{\Lambda,\ubar{x},\bar{x}}}(x)\right).
\end{align*}
\end{proposition}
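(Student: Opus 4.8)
The whole statement is a bookkeeping consequence of Lemma~\ref{main_lemma} together with the decomposition \eqref{split-up-NE-cond}. First I would record that the hypothesis $x\in(\alpha,\bar x)\cap D\setminus(\{\ubar x\}\cup\mathcal X)$ places $x$ simultaneously in the domain of \emph{both} assertions of Lemma~\ref{main_lemma}: on the one hand $x\in D\setminus\mathcal X$, so the first limit in Lemma~\ref{main_lemma} applies; on the other hand $x\in(\alpha,\bar x)\setminus\{\ubar x\}$, so the second limit in Lemma~\ref{main_lemma} applies.

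Next, apply \eqref{split-up-NE-cond} with $\hat\tau=\tau^{\Lambda,\ubar x,\bar x}$ to write
\begin{align*}
\frac{J_{\tau^{\Lambda,\ubar{x},\bar{x}}}(x)-J_{\tau^{\Lambda,\ubar{x},\bar{x}}\diamond \tau^{\Psi,\mathcal{X},D}(h)}(x)}{\E_x(\tau_h)}
=&-\frac{J_{\tau^{\Lambda,\ubar{x},\bar{x}}\circ \theta_{\tau_h}+\tau_h}(x)-J_{\tau^{\Lambda,\ubar{x},\bar{x}}}(x)}{\E_x(\tau_h)}\\
&+\frac{J_{\tau^{\Lambda,\ubar{x},\bar{x}}\circ \theta_{\tau_h}+\tau_h}(x)-J_{\tau^{\Lambda,\ubar{x},\bar{x}}\diamond \tau^{\Psi,\mathcal{X},D}(h)}(x)}{\E_x(\tau_h)}.
\end{align*}
By the second part of Lemma~\ref{main_lemma}, the first term on the right converges to $-\tfrac12(\lambda(x-)+\lambda(x))\big(J_{\tau^{\Lambda,\ubar x,\bar x}}(x)-g(x)\big)=\tfrac12(\lambda(x-)+\lambda(x))\big(g(x)-J_{\tau^{\Lambda,\ubar x,\bar x}}(x)\big)$ as $h\searrow0$. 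By the first part of Lemma~\ref{main_lemma}, the second term converges to $\tfrac12(\psi(x-)+\psi(x))\big(J_{\tau^{\Lambda,\ubar x,\bar x}}(x)-g(x)\big)=-\tfrac12(\psi(x-)+\psi(x))\big(g(x)-J_{\tau^{\Lambda,\ubar x,\bar x}}(x)\big)$.

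Since both limits exist (finitely), the limit of the sum exists and equals the sum of the limits; factoring out $g(x)-J_{\tau^{\Lambda,\ubar x,\bar x}}(x)$ gives
\[
\tfrac12\big(\lambda(x-)+\lambda(x)-\psi(x-)-\psi(x)\big)\big(g(x)-J_{\tau^{\Lambda,\ubar x,\bar x}}(x)\big),
\]
which is the claimed identity. There is no real obstacle here: all the analytic work (the local-time and $\tau_h$ asymptotics, boundedness of the integrands, the estimates via Lemmas~\ref{lemma-tau-h-oct}, \ref{prop_limit_local_time_time_tau-h}, and \ref{mp_lemma}) is already carried out in Lemma~\ref{main_lemma_local} and Lemma~\ref{main_lemma}; the only point requiring a moment's care is verifying that the stated range of $x$ meets the hypotheses of both halves of Lemma~\ref{main_lemma} simultaneously, and that the two one-sided contributions are combined with the correct signs coming from the decomposition \eqref{split-up-NE-cond}.
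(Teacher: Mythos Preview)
Your proof is correct and follows exactly the approach the paper takes: split the quotient via \eqref{split-up-NE-cond} and apply the two parts of Lemma~\ref{main_lemma}. The paper's own proof is a single sentence to this effect, and your write-up merely spells out the sign bookkeeping and the observation that the hypothesis on $x$ lands in the domain of both halves of Lemma~\ref{main_lemma}.
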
  
\begin{proof}
This can be shown by splitting up the limit according to \eqref{split-up-NE-cond} and using Lemma \ref{main_lemma}.
\end{proof}

\begin{proposition}\label{prop_CcD} For any $\tau^{\Lambda,\ubar{x},\bar{x}},\tau^{\Psi,\mathcal{X},D} \in \mathcal{N}$ and $x\in (\bar{x},\beta)\cap D$, 
\begin{align*}
\lim_{h\searrow 0}\frac{J_{\tau^{\Lambda,\ubar{x},\bar{x}}}(x)-J_{\tau^{\Lambda,\ubar{x},\bar{x}}\diamond \tau^{\Psi,\mathcal{X},D}(h)}(x)}{\E_x(\tau_h)}
= -f(x) - \mathbf{A}g(x) +g(x)\int_0^\infty rdF(r).
\end{align*}
\end{proposition}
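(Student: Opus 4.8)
\medskip
\noindent The plan is to reduce the claim to a single (discounted) Dynkin-type identity that exploits the fact that above the threshold the strategy $\tau^{\Lambda,\ubar{x},\bar{x}}$ stops at once. First I would record two elementary facts for $x\in(\bar{x},\beta)\cap D$. Since $\inf\{t\geq0:X_t\geq\bar{x}\}=0$ under $\mathbb{P}_x$, we have $\tau^{\Lambda,\ubar{x},\bar{x}}=0$ a.s., and as $h(0)=\int_0^\infty dF(r)=1$ this gives $J_{\tau^{\Lambda,\ubar{x},\bar{x}}}(x)=g(x)$. Moreover, choosing $\bar{h}>0$ with $[x-\bar{h},x+\bar{h}]\subset D\cap(\bar{x},\beta)$, we get for $0<h\leq\bar{h}$ that $\tau_h<\tau^D$ and $X_{\tau_h}\in(\bar{x},\beta)$, hence $\tau^{\Lambda,\ubar{x},\bar{x}}\circ\theta_{\tau_h}=0$; unwinding the definition of $\diamond$ then yields
\begin{align*}
\tau^{\Lambda,\ubar{x},\bar{x}}\diamond\tau^{\Psi,\mathcal{X},D}(h)=I_{\{\tau^{\Psi,\mathcal{X}}\leq\tau_h\}}\tau^{\Psi,\mathcal{X}}+I_{\{\tau^{\Psi,\mathcal{X}}>\tau_h\}}\tau_h=\tau^{\Psi,\mathcal{X}}\wedge\tau_h=:\rho_h.
\end{align*}
So it remains to compute $\lim_{h\searrow0}\big(g(x)-J_{\rho_h}(x)\big)/\E_x(\tau_h)$.

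By \eqref{J-in-terms-w} I would work with $w_{\rho_h}(x,r)$ for fixed $r$. Conditioning on ${\cal F}^X_\infty$, using Proposition \ref{dist-for-Lambda:prop} (note $\Psi_{\tau_h}<\infty$) and arguing as in the proof of Proposition \ref{centralProposition}, I would rewrite
\begin{align*}
w_{\rho_h}(x,r)=\E_x\!\Big(\int_0^{\tau_h}e^{-rs}e^{-\Psi_s}f(X_s)\,ds\Big)+\E_x\!\Big(\int_0^{\tau_h}e^{-rs}e^{-\Psi_s}g(X_s)\,d\Psi_s\Big)+\E_x\!\Big(e^{-r\tau_h}e^{-\Psi_{\tau_h}}g(X_{\tau_h})\Big).
\end{align*}
Since $\Psi$ is continuous and of finite variation, applying the product rule to $e^{-rt}e^{-\Psi_t}g(X_t)$ on $[0,\tau_h]$ — and here It\^o's formula for $g(X_t)$ carries no local-time term because $g\in\mathcal{C}^2$ — and taking expectations (the martingale part integrates to zero because $g'\sigma$ is bounded on $[x-\bar{h},x+\bar{h}]$ and $\E_x(\tau_h)<\infty$) yields
\begin{align*}
\E_x\!\Big(e^{-r\tau_h}e^{-\Psi_{\tau_h}}g(X_{\tau_h})\Big)=g(x)+\E_x\!\Big(\int_0^{\tau_h}e^{-rt}e^{-\Psi_t}(\mathbf{A}g-rg)(X_t)\,dt\Big)-\E_x\!\Big(\int_0^{\tau_h}e^{-rt}e^{-\Psi_t}g(X_t)\,d\Psi_t\Big).
\end{align*}
The two $d\Psi$-integrals cancel, leaving the clean identity
\begin{align*}
g(x)-w_{\rho_h}(x,r)=-\,\E_x\!\Big(\int_0^{\tau_h}e^{-rs}e^{-\Psi_s}\big(f+\mathbf{A}g-rg\big)(X_s)\,ds\Big).
\end{align*}

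Dividing by $\E_x(\tau_h)$ and letting $h\searrow0$, I would estimate the right-hand side using $|X_s-x|\leq h$ and $0\leq 1-e^{-rs}e^{-\Psi_s}\leq rs+\Psi_s$ for $s\leq\tau_h$, together with continuity of $f,g,g',g''$ (hence of $f+\mathbf{A}g-rg$) and the short-time moment estimates $\E_x(\tau_h^2)/\E_x(\tau_h)\to0$ and $\E_x(\tau_h l_{\tau_h}^{x})/\E_x(\tau_h)\to0$ (Lemmas \ref{lemma-tau-h-oct} and \ref{prop_limit_local_time_time_tau-h}, applied as in the proof of Lemma \ref{main_lemma_local}), to obtain $\big(g(x)-w_{\rho_h}(x,r)\big)/\E_x(\tau_h)\to-(f(x)+\mathbf{A}g(x)-rg(x))$. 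Finally, the uniform bound $|g(x)-w_{\rho_h}(x,r)|/\E_x(\tau_h)\leq \sup_{[x-\bar{h},x+\bar{h}]}(|f|+|\mathbf{A}g|)+r\sup_{(\alpha,\beta)}g$, which is $F$-integrable since $F$ has a finite first moment and $f,g,g',g''$ are bounded near $x$, lets me pass the limit through $\int_0^\infty(\cdot)\,dF(r)$ by dominated convergence; combining this with \eqref{J-in-terms-w} and $\int_0^\infty dF(r)=1$ gives
\begin{align*}
\lim_{h\searrow0}\frac{g(x)-J_{\rho_h}(x)}{\E_x(\tau_h)}=-\int_0^\infty\big(f(x)+\mathbf{A}g(x)-rg(x)\big)\,dF(r)=-f(x)-\mathbf{A}g(x)+g(x)\int_0^\infty r\,dF(r),
\end{align*}
which, by the first paragraph, is the assertion. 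I expect the main obstacle to be this last limit passage in $h$: one must show that the normalized short-time average of $(f+\mathbf{A}g-rg)(X_s)$ converges to its value at $x$ while controlling the weight $e^{-\Psi_s}$, which is precisely where the local-time moment estimate and a split of the time integral into small and large times enter, just as in the proof of Lemma \ref{main_lemma_local}; everything else is routine.
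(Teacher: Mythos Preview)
Your argument is correct. The reduction to $\rho_h=\tau^{\Psi,\mathcal{X}}\wedge\tau_h$ and the observation $J_{\tau^{\Lambda,\ubar{x},\bar{x}}}(x)=g(x)$ match the paper exactly, but from there you take a somewhat different technical route. The paper keeps the random time $\rho_h$ and applies It\^o to $e^{-rs}g(X_s)$ on $[0,\rho_h]$, treating the $f$-integral and the $g$-term separately; the passage to the limit then uses $\E_x(\tau^{\Psi}\wedge\tau_h)/\E_x(\tau_h)\to1$ from Lemma~\ref{prop_limit_local_time_time_tau-h}. You instead first condition on $\mathcal{F}^X_\infty$ (in the spirit of Propositions~\ref{dist-for-Lambda:prop}--\ref{centralProposition}) to convert everything to deterministic-horizon integrals on $[0,\tau_h]$ with the weight $e^{-\Psi_s}$, and then apply the product rule to $e^{-rt}e^{-\Psi_t}g(X_t)$; the $d\Psi$-terms cancel and you obtain the single identity
\[
g(x)-w_{\rho_h}(x,r)=-\,\E_x\!\Big(\int_0^{\tau_h}e^{-rs}e^{-\Psi_s}\big(f+\mathbf{A}g-rg\big)(X_s)\,ds\Big),
\]
which makes both the pointwise limit and the $r$-uniform domination transparent. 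What your route buys is a clean one-line formula and an explicit $F$-integrable majorant for dominated convergence; what the paper's route buys is slightly less bookkeeping (no conditioning on the path, no product rule with the BV factor $e^{-\Psi_t}$), at the cost of treating two terms separately and invoking the extra limit $\E_x(\tau^{\Psi}\wedge\tau_h)/\E_x(\tau_h)\to1$. Both approaches rest on the same short-time estimates from Lemmas~\ref{lemma-tau-h-oct} and~\ref{prop_limit_local_time_time_tau-h}.
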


\begin{proof}
Since $D$ and $(\bar{x},\beta)$ are open it follows that there exists a constant $\bar{h}>0$ such that, for $0<h\leq \bar{h}$, 
\begin{align*}
\tau^{\Lambda,\ubar{x},\bar{x}} \diamond \tau^{\Psi,\mathcal{X},D}(h) 
& = I_{\{\tau^{\Psi,\mathcal{X},D} \leq \tau_h\}}\tau^{\Psi,\mathcal{X},D} + I_{\{\tau^{\Psi,\mathcal{X},D} > \tau_h\}}({\tau^{\Lambda,\ubar{x},\bar{x}}}\circ \theta_{\tau_h}+\tau_h)\\
& = I_{\{\tau^{\Psi,\mathcal{X}} \leq \tau_h\}}\tau^{\Psi,\mathcal{X}} + I_{\{\tau^{\Psi,\mathcal{X}} > \tau_h\}}({\tau^{\Lambda,\ubar{x},\bar{x}}}\circ \theta_{\tau_h}+\tau_h)\\
&= \tau^{\Psi,\mathcal{X}} \wedge \tau_h.
\end{align*}
Since in this case $J_{\tau^{\Lambda,\ubar{x},\bar{x}}}(x)=g(x)$ it directly follows that
\begin{align*}
J_{\tau^{\Lambda,\ubar{x},\bar{x}}}&(x)-J_{\tau^{\Lambda,\ubar{x},\bar{x}}\diamond \tau^{\Psi,\mathcal{X}}(h)}(x)\\
&=g(x)-\E_x(h( \tau^{\Psi,\mathcal{X}} \wedge \tau_h)g(X_{\tau^{\Psi,\mathcal{X}}\wedge \tau_h}))-\E_x\left(\int_{0}^{ \tau^{\Psi,\mathcal{X}} \wedge \tau_h} h(s)f(X_s)ds\right).
\end{align*}
Using Lemma \ref{prop_limit_local_time_time_tau-h} and arguments similar to those in the proof of Lemma \ref{main_lemma} we find for the last term above that 
\begin{align*}
\lim_{h\searrow 0}\frac{-\E_x\left(\int_{0}^{ \tau^{\Psi,\mathcal{X}} \wedge \tau_h} h(s)f(X_s)ds\right)}{\E_x(\tau_h)}
%
%
=-f(x).
\end{align*}
For the remaining parts we use It\^{o}'s formula and arguments similar to those above to obtain
\begin{align*}
\frac{g(x)-\E_x(h( \tau^{\Psi,\mathcal{X}} \wedge \tau_h)g(X_{\tau^{\Psi,\mathcal{X}}\wedge \tau_h}))}{\E_x(\tau_h)}
&=\int_0^\infty\frac{-\E_x\left(\int_{0}^{ \tau^{\Psi,\mathcal{X}} \wedge \tau_h}e^{-rs}(\mathbf{A}-r)g(X_s)ds\right)}{\E_x(\tau_h)}dF(r)\\
&\rightarrow g(x)\int_0^\infty rdF(r)-\mathbf{A}g(x),
\end{align*}
as $h\searrow 0$. Adding the two limits completes the proof.
\end{proof}


\begin{proposition}\label{prop_lower_border} Consider an arbitrary $\tau^{\Lambda,\ubar{x},\bar{x}}\in \mathcal{N}$ and suppose that $J_{\tau^{\Lambda,\ubar{x},\bar{x}}}$ satisfies the differentiability condition in Theorem \ref{main_thm}, condition \eqref{mainthmcond2}  and the smooth fit condition \eqref{mainthmcond4}. Consider an arbitrary $\tau^{\Psi,\mathcal{X},D} \in \mathcal{N}$. Then, for $x \in [\ubar{x},\bar{x}]\cap D$, it holds that
\begin{align*}
\lim_{h\searrow 0}\frac{J_{\tau^{\Lambda,\ubar{x},\bar{x}}\circ \theta_{\tau_h}+\tau_h}(x)-J_{\tau^{\Lambda,\ubar{x},\bar{x}}\diamond \tau^{\Psi,\mathcal{X},D}(h)}(x)}{\E_x(\tau_h)} =0.
\end{align*}
Furthermore, for $x\in [\ubar{x},\bar{x})\cap D$ (note that this set is empty for pure strategies), it holds that
\begin{align*}
\lim_{h\searrow 0}\frac{J_{\tau^{\Lambda,\ubar{x},\bar{x}}}(x)-J_{\tau^{\Lambda,\ubar{x},\bar{x}}\diamond \tau^{\Psi,\mathcal{X},D}(h)}(x)}{\E_x(\tau_h)} =0.
\end{align*} 
\end{proposition}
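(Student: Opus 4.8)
The plan is to reduce the second statement to the first and then prove the first, splitting $[\ubar{x},\bar{x}]\cap D$ into the points that do and do not lie in $\mathcal{X}$. The crucial inputs will be the identity $J_{\tau^{\Lambda,\ubar{x},\bar{x}}}=g$ on all of $[\ubar{x},\bar{x}]$ --- which holds on $[\ubar{x},\bar{x})$ by \eqref{mainthmcond2}, and at $\bar{x}$ because $\tau^{\Lambda,\ubar{x},\bar{x}}=0$ when $X_0=\bar{x}$ --- together with the smooth fit condition \eqref{mainthmcond4}.

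For the reduction: given $x\in[\ubar{x},\bar{x})\cap D$, split the numerator of the second statement according to \eqref{split-up-NE-cond} with $\hat\tau=\tau^{\Lambda,\ubar{x},\bar{x}}$. The second resulting term is exactly the quantity in the first statement and hence tends to $0$. For the first term, \eqref{sec_resul_lambda} gives $J_{\tau^{\Lambda,\ubar{x},\bar{x}}}(x)=J_{\tau^{\Lambda,\ubar{x},\bar{x}}\diamond\tau^{\Lambda,\ubar{x},\bar{x}}(h)}(x)$, so that term is minus the quantity in the first statement with the generic deviation $\tau^{\Psi,\mathcal{X},D}$ replaced by $\tau^{\Lambda,\ubar{x},\bar{x}}$ itself; since the latter is an admissible local time pushed mixed stopping time (take $D=(\alpha,\bar{x})$ and $\mathcal{X}=\{\ubar{x}\}$ in Definition \ref{def:mix_strat}), the first statement applies once more and this term also tends to $0$.

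It remains to prove the first statement. For $x\in([\ubar{x},\bar{x}]\cap D)\setminus\mathcal{X}$ this is immediate from the first part of Lemma \ref{main_lemma}, whose limit $\tfrac12(\psi(x-)+\psi(x))(J_{\tau^{\Lambda,\ubar{x},\bar{x}}}(x)-g(x))$ vanishes. For $x=x_i\in[\ubar{x},\bar{x}]\cap D\cap\mathcal{X}$ I would rerun the computation from the proof of Lemma \ref{main_lemma_local}, which writes the numerator as $(A)+(B)$ (Lebesgue-density part plus local-time part). The bound obtained there for $(A)$ in fact yields $|(A)|/\E_x(\tau_h)\to0$ once one uses $\E_x(\tau_h^2)=o(\E_x(\tau_h))$ and the continuity of $J_{\tau^{\Lambda,\ubar{x},\bar{x}}}$ and $g$ at $x$; and the subterms of $(B)$ that were controlled via $\E_x(\tau_h l_{\tau_h}^{x})$ in that proof are $o(\E_x(\tau_h))$ because $\E_x(\tau_h l_{\tau_h}^{x})=o(\E_x(\tau_h))$ by Lemma \ref{prop_limit_local_time_time_tau-h}. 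Since $l^{x}$ increases only at $x$ and $J_{\tau^{\Lambda,\ubar{x},\bar{x}}}(x)=g(x)$, the surviving part of $(B)$ equals, up to an $o(\E_x(\tau_h))$ error,
\begin{align*}
\psi_i\,\E_x\!\left[\big(J_{\tau^{\Lambda,\ubar{x},\bar{x}}}(X_{\tau_h})-J_{\tau^{\Lambda,\ubar{x},\bar{x}}}(x)\big)\,l_{\tau^{\Psi}\wedge\tau_h}^{x}\right].
\end{align*}
Now $J_{\tau^{\Lambda,\ubar{x},\bar{x}}}$ is differentiable at $x$ with derivative $g'(x)$ --- for $x\in(\ubar{x},\bar{x}]$ because $J_{\tau^{\Lambda,\ubar{x},\bar{x}}}=g$ in a neighbourhood of $x$, and for $x=\ubar{x}$ by the smooth fit \eqref{mainthmcond4} combined with \eqref{mainthmcond2} --- so a first-order Taylor expansion reduces the display to $\psi_i g'(x)\,\E_x[(X_{\tau_h}-x)\,l_{\tau^{\Psi}\wedge\tau_h}^{x}]$ plus an error of size at most $\psi_i\varepsilon(h)\,h\,\E_x(l_{\tau_h}^{x})$ with $\varepsilon(h)\to0$; the latter is $o(\E_x(\tau_h))$ since $h\,\E_x(l_{\tau_h}^{x})=O(\E_x(\tau_h))$ by Lemma \ref{prop_limit_local_time_time_tau-h}.

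Everything thus reduces to showing $\E_x[(X_{\tau_h}-x)\,l_{\tau^{\Psi}\wedge\tau_h}^{x}]=o(\E_x(\tau_h))$, which I expect to be the main obstacle: it expresses that the local time accumulated at $x$ before $X$ leaves $(x-h,x+h)$ is, to leading order, uncorrelated with the side on which $X$ exits. I would split it as $\E_x[(X_{\tau_h}-x)l_{\tau_h}^{x}]-\E_x[(X_{\tau_h}-x)(l_{\tau_h}^{x}-l_{\tau^{\Psi}\wedge\tau_h}^{x})]$. For the first term, Tanaka's formula rewrites $(X_{\tau_h}-x)l_{\tau_h}^{x}$ so that its expectation is, up to lower-order contributions, a multiple of $\E_x[\int_0^{\tau_h}\mathrm{sgn}(X_s-x)\sigma^2(X_s)\,ds]$, which is $o(\E_x(\tau_h))$ by the occupation times formula together with $\E_x(\tau_h)$ being of order $h^2$ and the $\tau_h$- and $l^{x}$-moment estimates of Lemma \ref{prop_limit_local_time_time_tau-h} and Lemma \ref{lemma-tau-h-oct}. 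The second term vanishes off $\{\tau^{\Psi}<\tau_h\}$, whose probability is, by Proposition \ref{dist-for-Lambda:prop}, at most $\E_x(\Psi_{\tau_h})=O(\E_x(l_{\tau_h}^{x}))+O(\E_x(\tau_h))\to0$, so by Cauchy--Schwarz it is bounded by $h\sqrt{\E_x((l_{\tau_h}^{x})^2)}\,\sqrt{\mathbb{P}_x(\tau^{\Psi}<\tau_h)}=o(\E_x(\tau_h))$. Assembling these estimates for $x\in\mathcal{X}$, together with the $x\notin\mathcal{X}$ case and the reduction, establishes both statements.
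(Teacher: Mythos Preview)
Your overall strategy is sound, and your reduction of the second statement to the first via \eqref{split-up-NE-cond} and \eqref{sec_resul_lambda} matches the paper. Your proof of the first statement, however, takes a genuinely different route.

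The paper does \emph{not} split on whether $x\in\mathcal{X}$. Instead it writes the numerator uniformly as $(A)+(B)$ with $(B)=\E_x\bigl[I_{\{\tau^\Psi\le\tau_h\}}\bigl(J_{\tau^{\Lambda,\ubar{x},\bar{x}}}(X_{\tau_h})-g(X_{\tau^\Psi})\bigr)\bigr]$, and applies the local time--space (It\^o--Tanaka) formula to $J_{\tau^{\Lambda,\ubar{x},\bar{x}}}(X_{\tau_h})-g(x)$ and $g(X_{\tau^\Psi})-g(x)$. The stochastic-integral contribution is then bounded via Cauchy--Schwarz by a constant times $|J'_{\tau^{\Lambda,\ubar{x},\bar{x}}}(x)-g'(x)|$, which vanishes for every $x\in[\ubar{x},\bar{x}]$ by \eqref{mainthmcond2} and \eqref{mainthmcond4}; the remaining drift-type terms are controlled by $\E_x(\tau_h\Psi_{\tau_h})/\E_x(\tau_h)\to0$. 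This treats the local-time and Lebesgue-density parts of $\Psi$ simultaneously and invokes smooth fit in a single, transparent place.

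Your approach---case-split, then for $x\in\mathcal{X}$ Taylor-expand $J(X_{\tau_h})-J(x)$ and reduce to the ``decorrelation'' estimate $\E_x\bigl[(X_{\tau_h}-x)\,l^x_{\tau^\Psi\wedge\tau_h}\bigr]=o(\E_x(\tau_h))$---also works, but two points need tightening. First, for the Lebesgue piece $(A)$ you cannot simply cite the crude bound from the proof of Lemma~\ref{main_lemma_local}: that bound does not exploit $J(x)=g(x)$. You must rerun the analysis of Lemma~\ref{main_lemma} on this piece, which indeed gives the limit $\tfrac12(\psi(x-)+\psi(x))(J(x)-g(x))=0$. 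Second, your Tanaka route lands on $\E_x\bigl[\int_0^{\tau_h}\mathrm{sgn}(X_s-x)\sigma^2(X_s)\,ds\bigr]$, and it is not clear from the ingredients you list that this is $o(\E_x(\tau_h))$ for a general diffusion. A cleaner argument: by the It\^o product rule and $(X_s-x)\,dl^x_s=0$,
\[
(X_{\tau_h}-x)\,l^x_{\tau_h}=\int_0^{\tau_h}l^x_s\,dX_s,
\]
hence $\E_x\bigl[(X_{\tau_h}-x)l^x_{\tau_h}\bigr]=\E_x\bigl[\int_0^{\tau_h}l^x_s\,\mu(X_s)\,ds\bigr]\le C\,\E_x(\tau_h l^x_{\tau_h})=o(\E_x(\tau_h))$ by Lemma~\ref{prop_limit_local_time_time_tau-h}; your Cauchy--Schwarz bound then disposes of the $l^x_{\tau_h}-l^x_{\tau^\Psi\wedge\tau_h}$ correction. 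With these fixes your argument goes through; the paper's It\^o--Tanaka approach is shorter and avoids the decorrelation detour.
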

\begin{proof}

By splitting up the term in the second limit using \eqref{split-up-NE-cond} and then using \eqref{sec_resul_lambda}, we find that the second result follows directly from the first result. Let us prove the first result. 
%
%
%
%
Similarly to the proof of Lemma \ref{main_lemma_local} we find that
\begin{align*}
&\frac{J_{\tau^{\Lambda,\ubar{x},\bar{x}}\circ \theta_{\tau_h}+\tau_h}(x)-J_{\tau^{\Lambda,\ubar{x},\bar{x}}\diamond \tau^{\Psi}(h)}(x)}{\E_x(\tau_h)}=\\ 
&\frac{\int_0^\infty\E_{x}\left(I_{\{\tau^{\Psi} \leq \tau_h\}}\left((e^{-r\tau_h}-1)w_{\tau^{\Lambda,\ubar{x},\bar{x}}}(X_{\tau_h},r)+\int_{\tau^{\Psi}}^{\tau_h} e^{-rs}f(X_s)ds +(1-e^{-r\tau^{\Psi}})g(X_{\tau^{\Psi}})\right)\right)dF(r)}{\E_{x}\left(\tau_h\right)}\\
&\enskip +\frac{\E_{x}\left(I_{\{\tau^{\Psi} \leq \tau_h\}}\left(J_{\tau^{\Lambda,\ubar{x},\bar{x}}}(X_{\tau_h})-g(X_{\tau^{\Psi}})\right)\right)}{\E_{x}\left(\tau_h\right)}\\
%
%
&=(A)+(B).  
\end{align*}
Analogously to the proof of Lemma \ref{main_lemma_local}, we condition on the whole path of $X$ and use $|e^{-y}-1|\leq y$, for $y\geq 0$, to obtain 
\begin{align*}
\lim_{h\searrow 0}|(A)|&\leq \lim_{h\searrow 0}\int_0^\infty\E_x\left(\int_0^{\tau^\Psi\wedge\tau_h}\left(r\tau_hw_{\tau^{\Lambda,\ubar{x},\bar{x}}}(X_{\tau_h},r)+\int_{t}^{\tau_h}f(X_s)ds+rtg(X_t)
\right)d\Psi_t\right)dF(r)\\
&\leq \lim_{h\searrow 0} \int_0^\infty\left(\sup_{y_1,y_2,y_3\in[x-h,x+h]}(rw_{\tau^{\Lambda,\ubar{x},\bar{x}}}(y_1,r)+f(y_2) +rg(y_3))\frac{\E_x\left(\tau_h\Psi_{\tau_h}\right)}{\E_x(\tau_h)}\right)dF(r)=0,
\end{align*}
where the limit can be found by writing $\Psi_{\tau_h }= \psi_iI_{\{x=x_i\}} l^x_{\tau_h} + \int_0^{\tau_h}\psi(X_s)ds$ and then using Lemma \ref{lemma-tau-h-oct} and Lemma \ref{prop_limit_local_time_time_tau-h}.

Using the local time-space formula, see, e.g., \cite[p. 75]{Peskir}, Proposition \ref{centralProposition} and that $J_{\tau^{\Lambda,\ubar{x},\bar{x}}}(x)=g(x)$ for $x\in [\ubar{x},\bar{x}]$  we obtain, for any sufficiently small $h$, 
\begin{align*}
(B)
&=\frac{\E_{x}\left(I_{\{\tau^{\Psi} \leq \tau_h\}}\left(J_{\tau^{\Lambda,\ubar{x},\bar{x}}}(X_{\tau_h})-g(x)+g(x)-g(X_{\tau^{\Psi}})\right)\right)}{\E_{x}\left(\tau_h\right)}\\
%
%
%
&=\frac{\E_{x}\left(\int_0^{\tau^{\Psi}\wedge \tau_h}\left(\int_0^{\tau_h}\mathbf{A}J_{\tau^{\Lambda,\ubar{x},\bar{x}}}(X_s)I_{\{X_s\neq \ubar{x}\}}ds-\int_0^t\mathbf{A}g(X_s)ds\right)d\Psi_t\right)}{\E_x\left(\tau_h\right)}\\
&\enskip+\frac{\E_{x}\left(\int_0^{\tau^{\Psi}\wedge \tau_h} \int_0^{\tau_h}\sigma(X_s)(J'_{\tau^{\Lambda,\ubar{x},\bar{x}}}(X_s)-g'(X_s))dW_s d\Psi_t+I_{\{\tau^\Psi\leq \tau_h\}}\int_{\tau^{\Psi}}^{\tau_h}\sigma(X_s)g'(X_s)dW_s\right)}{\E_x\left(\tau_h\right)}.
\end{align*}
%
%
By Cauchy-Schwartz, It\^{o} isometry and smooth fit \eqref{mainthmcond4}, we obtain for the first part of the last term above that
\begin{align*}
&\left|\frac{\E_{x}\left(
\int_0^{\tau^{\Psi}\wedge \tau_h}
 \int_0^{\tau_h}\sigma(X_s)(J'_{\tau^{\Lambda,\ubar{x},\bar{x}}}(X_s)-g'(X_s))dW_s d\Psi_t\right)}{\E_x\left(\tau_h\right)}\right|\\&\enskip=
\left|\frac{\E_x\left(\Psi_{\tau^\Psi\wedge \tau_h}
 \int_0^{\tau_h}\sigma(X_s)(J'_{\tau^{\Lambda,\ubar{x},\bar{x}}}(X_s)-g'(X_s))dW_s \right)}{\E_x\left(\tau_h\right)}\right|\\&\enskip\leq \sqrt{\frac{\E_{x}\left(
\Psi_{\tau_h}^2\right)}
{\E_x\left(\tau_h\right)}\frac{\E_{x}\left(
\int_0^{\tau_h}\sigma^2(X_s)(J'_{\tau^{\Lambda,\ubar{x},\bar{x}}}(X_s)-g'(X_s))^2ds\right)}{\E_x\left(\tau_h\right)}}\rightarrow 2 c\sigma(x) |J'_{\tau^{\Lambda,\ubar{x},\bar{x}}}(x)-g'(x)|=0,
\end{align*}
as $h\searrow 0$, where $c$ is a constant (to see this use the same argument used when finding the previous limit). For the second part of the term, we obtain by conditioning on $\mathcal{F}_{\tau^\Psi}$ and the Markov property that
\begin{align*}
\E_{x}\left(
I_{\{\tau^\Psi\leq \tau_h\}}\int_{\tau^\Psi}^{\tau_h}\sigma(X_s)g'(X_s)dW_s\right)
&=\E_{x}\left(
I_{\{\tau^{\Psi} \leq \tau_h\}}\E_x\left(\int_{\tau^\Psi}^{\tau_h}\sigma(X_s)g'(X_s)dW_s\bigg|\mathcal{F}_{\tau^\Psi}\right)\right)\\
&=\E_{x}\left(
I_{\{\tau^{\Psi} \leq \tau_h\}}\E_{X_{\tau^\Psi}}\left(\int_{0}^{\tau_h}\sigma(X_s)g'(X_s)dW_s\right)\right)\\&=0.
\end{align*} 
Finally, we obtain for (B) 
\begin{align*}
\lim_{h\searrow 0}|(B)|\leq\lim_{h\searrow 0} \sup_{y_1\in[x-h,x+h]\setminus\{\ubar{x}\}}(|\mathbf{A} J_{\tau^{\Lambda,\ubar{x},\bar{x}}}(y_1)|+|\mathbf{A}g(y_2)|)\frac{\E_x\left(\tau_h \Psi_{\tau_h}\right)}{\E_x\left(\tau_h\right)}=0,
\end{align*}
where the limit is found similarly to the previous limits; using also that for $x<\ubar{x}$ it holds that
$\mathbf{A}  J_{\tau^{\Lambda,\ubar{x},\bar{x}}}(x) 
= \int_0^\infty\mathbf{A} w_{\tau^{\Lambda,\ubar{x},\bar{x}}}(x,r) dF(r)
= -f(x)+\int_0^\infty r w_{\tau^{\Lambda,\ubar{x},\bar{x}}}(x,r)dF(r)$, where  the first equality immediately holds (here $\mathbf{A}$ is interpreted as the characteristic operator) and the second last equality holds by standard martingale arguments (specifically in this case \eqref{prop_help_diff}, below, holds), and for $x>\ubar{x}$ it holds that $\mathbf{A} J_{\tau^{\Lambda,\ubar{x},\bar{x}}}(x) = \mathbf{A} g(x)$ by \eqref{mainthmcond2}. 
\end{proof} 
\begin{proposition}\label{prop_pCD}
Consider an arbitrary $\tau^{\Lambda,\ubar{x},\bar{x}}\in \mathcal{N}$ and suppose all conditions of Theorem \ref{main_thm} are satisfied. Consider an arbitrary $\tau^{\Psi,\mathcal{X},D} \in \mathcal{N}$. Then, if $\bar{x}\in D$, it holds that
\begin{align}\label{prop_pCD_eq}
\limsup_{h\searrow 0}\frac{J_{\tau^{\Lambda,\ubar{x},\bar{x}}}(\bar{x})-J_{\tau^{\Lambda,\ubar{x},\bar{x}}\diamond \tau^{\Psi,\mathcal{X},D}(h)}(\bar{x})}{\E_x(\tau_h)} 
\leq 0.
\end{align}
\end{proposition}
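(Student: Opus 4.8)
### Proof plan for Proposition \ref{prop_pCD}

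The plan is to analyze the behavior at the single point $x = \bar{x}$, which is delicate because $\bar{x}$ is simultaneously the right endpoint of the continuation/pushing region and the left endpoint of the stopping region $[\bar{x},\beta)$; the two one-sided limits obtained from Propositions \ref{prop_CD} and \ref{prop_CcD} are in general different. First I would note that since $\bar{x} \in D$ and $D$ is open, for all sufficiently small $h>0$ we have $\tau_h < \tau^D$, so that $\tau^{\Lambda,\ubar{x},\bar{x}} \diamond \tau^{\Psi,\mathcal{X},D}(h)$ reduces (as in the earlier proofs) to the expression involving only $\tau^{\Psi,\mathcal{X}}$ and $\tau_h$. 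Next I would split the numerator along the lines of \eqref{split-up-NE-cond}, writing it as the sum of the ``continuation part'' $J_{\tau^{\Lambda,\ubar{x},\bar{x}}}(\bar{x}) - J_{\tau^{\Lambda,\ubar{x},\bar{x}}\circ\theta_{\tau_h}+\tau_h}(\bar{x})$ and the ``deviation part'' $J_{\tau^{\Lambda,\ubar{x},\bar{x}}\circ\theta_{\tau_h}+\tau_h}(\bar{x}) - J_{\tau^{\Lambda,\ubar{x},\bar{x}}\diamond\tau^{\Psi,\mathcal{X},D}(h)}(\bar{x})$.

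For the deviation part, I would adapt the argument of Lemma \ref{main_lemma}: conditioning on $\mathcal{F}^X_\infty$ and using Proposition \ref{dist-for-Lambda:prop}, the leading contribution is governed by the pushing intensity $\psi$ weighted by the difference $J_{\tau^{\Lambda,\ubar{x},\bar{x}}}(\bar{x}) - g(\bar{x})$, which vanishes by condition \eqref{mainthmcond2} since $\bar{x}\in[\ubar{x},\bar{x}]$ implies $J_{\tau^{\Lambda,\ubar{x},\bar{x}}}(\bar{x})=g(\bar{x})$ (continuity of $J$ extends this to the boundary point). The subtlety is that there may also be a local-time push if $\bar{x}\in\mathcal{X}$; but again the relevant factor is $J_{\tau^{\Lambda,\ubar{x},\bar{x}}}(\bar{x})-g(\bar{x})=0$, and one has to check, using Lemma \ref{prop_limit_local_time_time_tau-h} and the $\mathcal{C}^2$/continuity hypotheses on $J$ and $g$, that the lower-order terms involving $\E_x(\tau_h l^x_{\tau_h})$, $\E_x(\tau_h^2)$, etc., divided by $\E_x(\tau_h)$, vanish as $h\searrow 0$. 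Thus the deviation part tends to $0$.

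For the continuation part, I would use It\^o's formula together with the local time-space formula (as in Proposition \ref{prop_lower_border} and Proposition \ref{prop_CcD}). Writing $J_{\tau^{\Lambda,\ubar{x},\bar{x}}}(\bar{x}) - J_{\tau^{\Lambda,\ubar{x},\bar{x}}\circ\theta_{\tau_h}+\tau_h}(\bar{x})$ via \eqref{J-in-terms-w} and applying It\^o to $e^{-rs}w_{\tau^{\Lambda,\ubar{x},\bar{x}}}(X_s,r)$ on $[0,\tau_h]$, the martingale terms drop out (for the part where $J'=g'$ one needs smooth fit \eqref{mainthmcond4} at $\ubar{x}$; near $\bar{x}$ the function $g$ is $\mathcal{C}^2$ so no boundary issue arises there), leaving a Riemann-integral term whose normalized limit is the one-sided value of $-\mathbf{A}J_{\tau^{\Lambda,\ubar{x},\bar{x}}}$ read off from the correct side of $\bar{x}$. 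Because $X$ started at $\bar{x}$ spends, in the limit $h\searrow 0$, ``half its time'' on each side, the limit of the continuation part is $\tfrac12$ times the sum of the left value and the right value of $-\mathbf{A}J_{\tau^{\Lambda,\ubar{x},\bar{x}}} + g\int_0^\infty r\,dF(r) - f$ type expressions; concretely, combining the calculations behind Proposition \ref{prop_CD} (with $\psi\equiv 0$ since we only keep the continuation term) and Proposition \ref{prop_CcD} this limit equals
\begin{align*}
\tfrac12\Big(\lambda(\bar{x}-)\big(g(\bar{x})-J_{\tau^{\Lambda,\ubar{x},\bar{x}}}(\bar{x})\big)\Big) + \tfrac12\Big(-f(\bar{x})-\mathbf{A}g(\bar{x})+g(\bar{x})\textstyle\int_0^\infty r\,dF(r)\Big).
\end{align*}
The first summand vanishes by \eqref{mainthmcond2}, and the second is $\leq 0$ by condition \eqref{mainthmcond3} evaluated at $\bar{x}\in[\bar{x},\beta)$. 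Adding the deviation part (which is $0$) gives that the $\limsup$ in \eqref{prop_pCD_eq} is $\leq 0$.

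The main obstacle will be handling the continuation part rigorously at the junction point $\bar{x}$: one must justify that the $\mathcal{C}^2((\alpha,\ubar{x})\cup(\ubar{x},\bar{x}))\cap\mathcal{C}(\alpha,\beta)$ regularity of $J_{\tau^{\Lambda,\ubar{x},\bar{x}}}$ is enough to apply the local time-space formula across $\ubar{x}$ (absorbing the local-time-at-$\ubar{x}$ term via smooth fit \eqref{mainthmcond4}), and that the process, started at $\bar{x}$, produces exactly the symmetric average of the two one-sided drift limits as $h\searrow 0$ — this is precisely where the diffusion coefficient $\sigma(\bar{x})$ and the scaling $\E_{\bar{x}}(\tau_h)\sim \sigma^2(\bar{x})h^2$-type estimates from Lemma \ref{lemma-tau-h-oct} and Lemma \ref{prop_limit_local_time_time_tau-h} enter, and where one should be careful that the one-sided value of $\mathbf{A}J_{\tau^{\Lambda,\ubar{x},\bar{x}}}$ as $x\uparrow\bar{x}$ is well-defined and finite.
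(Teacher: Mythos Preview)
Your high-level decomposition (split via \eqref{split-up-NE-cond}; handle the deviation part via the first statement of Proposition~\ref{prop_lower_border}) matches the paper exactly, and your treatment of the deviation part is fine.

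The continuation part is where your approach diverges from the paper and runs into a genuine gap. The paper does \emph{not} attempt a unified ``half-on-each-side'' computation at $\bar{x}$; it distinguishes the mixed case $\ubar{x}<\bar{x}$ from the pure case $\ubar{x}=\bar{x}$. In the mixed case the key observation---which you miss---is that condition~\eqref{mainthmcond2} gives $J_{\tau^{\Lambda,\ubar{x},\bar{x}}}=g$ on $[\ubar{x},\bar{x})$, while trivially $J_{\tau^{\Lambda,\ubar{x},\bar{x}}}=g$ on $[\bar{x},\beta)$; hence $J_{\tau^{\Lambda,\ubar{x},\bar{x}}}=g$ on a full two-sided neighborhood of $\bar{x}$. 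This lets one replace $J_{\tau^{\Lambda,\ubar{x},\bar{x}}}(X_{\tau_h})$ by $g(X_{\tau_h})$ and apply ordinary It\^o to the $\mathcal{C}^2$ function $g$, producing the \emph{full} limit $-f(\bar{x})-\mathbf{A}g(\bar{x})+g(\bar{x})\int_0^\infty r\,dF(r)$, not half of it. No $\lambda(\bar{x}-)$ ever appears. In the pure case the paper instead uses the ODE $(\mathbf{A}-r)w+f=0$ valid on $(\alpha,\bar{x})$ together with the local time-space formula at $\bar{x}$ (the local-time boundary term is killed by smooth fit~\eqref{mainthmcond4}), and concludes only a $\limsup\le 0$ bound using~\eqref{mainthmcond3} and $f\ge 0$ on the indicators $I_{\{X_s>\bar{x}\}}$, not an exact limit.

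Your proposed formula $\tfrac12\lambda(\bar{x}-)(g(\bar{x})-J_{\tau^{\Lambda,\ubar{x},\bar{x}}}(\bar{x}))+\tfrac12(\cdots)$ is problematic for two reasons. First, Remark~\ref{rem:motivation_defs} explicitly allows $\lambda(x)\to\infty$ as $x\nearrow\bar{x}$ (and this is exactly what happens in the real-options example of Section~\ref{sec:real-option-problem}); then your first summand is an indeterminate $\infty\cdot 0$ and cannot simply be declared zero. Second, your plan to ``apply It\^o to $e^{-rs}w_{\tau^{\Lambda,\ubar{x},\bar{x}}}(X_s,r)$'' requires $\mathcal{C}^2$-regularity of each $w(\cdot,r)$ near $\bar{x}$, which is not among the hypotheses of Theorem~\ref{main_thm} (only $J$ is assumed $\mathcal{C}^2$ on $(\alpha,\ubar{x})\cup(\ubar{x},\bar{x})$). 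The paper's neighborhood trick in the mixed case, and the ODE plus limsup argument in the pure case, are precisely designed to avoid both issues; your unified approach would need substantial additional work to handle the possible blow-up of $\lambda$ and the missing regularity of $w$.
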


\begin{proof}
As usual we split up the expression in the limit in two terms according to \eqref{split-up-NE-cond}. The second term converges to zero by the first result in Proposition \ref{prop_lower_border}. In the proof we rely on the observation that $J_{\tau^{\Lambda,\ubar{x},\bar{x}}}(\bar{x})= g(\bar{x})$. We handle the first term in two cases. First, we consider a mixed threshold strategy (i.e., $\ubar{x}<\bar{x}$). Then we find for the first term that
\begin{align*}
J_{\tau^{\Lambda,\ubar{x},\bar{x}}\circ \theta_{\tau_h}+\tau_h}(\bar{x})
&=\int_0^\infty\E_{\bar{x}}\left(\int_0^{\tau_h}e^{-rs}f(X_s)ds+e^{-r\tau_h}w_{\tau^{\Lambda,\ubar{x},\bar{x}}}(X_{\tau_h},r)\right)dF(r)\\
&=\int_0^\infty\E_{\bar{x}}\left(\int_0^{\tau_h}e^{-rs}f(X_s)ds+(e^{-r\tau_h}-1)w_{\tau^{\Lambda,\ubar{x},\bar{x}}}(X_{\tau_h},r)\right)dF(r)\\
&\quad+\E_{\bar{x}}\left(J_{\tau^{\Lambda,\ubar{x},\bar{x}}}(X_{\tau_h})\right).
\end{align*}
By \eqref{mainthmcond2} there exists a $\bar{h}$, such that $J_{\tau^{\Lambda,\ubar{x},\bar{x}}}(x)= g(x)$ for $x\in[\bar{x}-\bar{h},\bar{x}+\bar{h}]$ and using It\^{o}'s formula we thus obtain
for $h\leq \bar{h}$,
\begin{align*}
g(\bar{x})-J_{\tau^{\Lambda,\ubar{x},\bar{x}}\circ \theta_{\tau_h}+\tau_h}(\bar{x})
&= \int_0^\infty\E_{\bar{x}}\left(-\int_0^{\tau_h}\left(e^{-rs}f(X_s)+\mathbf{A}g(X_s)\right)ds+(1-e^{-r\tau_h})w_{\tau^{\Lambda,\ubar{x},\bar{x}}}(X_{\tau_h},r)\right)dF(r). 
\end{align*}
We finish the proof for the mixed case by showing that the expression above converges to the negative of the term given in \eqref{mainthmcond3}. For the first integral part we obtain 
\begin{align*}
  -\frac{\int_0^\infty\E_{\bar{x}}\left(\int_0^{\tau_h}\left(e^{-rs}f(X_s)+\mathbf{A}g(X_s)\right)ds\right)dF(r)}{\E_{\bar{x}}(\tau_h)}\rightarrow -f(\bar{x})-\mathbf{A}g(\bar{x}),
 \end{align*}
as  $h\searrow 0$.
%
%
For the second part we obtain using $|1-e^{-y}|\leq y$ for $y\geq 0$ and basic calculations that
\begin{align*}
  & \left| \frac{\int_0^\infty\E_{\bar{x}}\left((1-e^{-r\tau_h})w_{\tau^{\Lambda,\ubar{x},\bar{x}}}(X_{\tau_h},r)\right)dF(r)}{\E_{\bar{x}}(\tau_h)}-g(\bar{x})\int_0^\infty rdF(r)\right|\\&\enskip= \left| \frac{\int_0^\infty\E_{\bar{x}}\left((1-e^{-r\tau_h})(w_{\tau^{\Lambda,\ubar{x},\bar{x}}}(X_{\tau_h},r)-g(\bar{x}))+rg(\bar{x})\int_0^{\tau_h}(e^{-rs}-1)ds\right)dF(r)}{\E_{\bar{x}}(\tau_h)}\right|\\
  &\enskip\leq
 \int_0^\infty\left(\sup_{y\in[\bar{x}-h,\bar{x}+h]}r|w_{\tau^{\Lambda,\ubar{x},\bar{x}}}(y,r)-g(\bar{x})|+rg(\bar{x})\left|\frac{\E_{\bar{x}}\left(\int_0^{\tau_h}(e^{-rs}-1)ds\right)}{\E_{\bar{x}}(\tau_h)}\right|\right)dF(r)\rightarrow 0
\end{align*}
as $h\searrow 0$, where the limit is found using the usual arguments as well as $w_{\tau^{\Lambda,\ubar{x},\bar{x}}}(\bar{x},r)=g(\bar{x})$ (which holds trivially).

We now consider a pure threshold strategy (i.e., $\ubar{x}=\bar{x})$. In this case we note that by the standard line of arguments it holds for $x<\bar{x}$ that $x \mapsto w_{\tau^{\Lambda,\ubar{x},\bar{x}}}(x,r)$ satisfies the differential equation 
\begin{align}\label{prop_help_diff}
(\mathbf{A}-r)w_{\tau^{\Lambda,\ubar{x},\bar{x}}}(x,r)+f(x)=0.
\end{align}
and $w_{\tau^{\Lambda,\ubar{x},\bar{x}}}(x,r)=g(x)$, for $x\geq \bar{x}$. Using these observations and the local time-space formula 
we obtain similarly to the above that 
\begin{align*}
g(\bar{x})-J_{\tau^{\Lambda,\ubar{x},\bar{x}}\circ \theta_{\tau_h}+\tau_h}(\bar{x})
&= \int_0^\infty\E_{\bar{x}}\left(-\int_0^{\tau_h}\left(e^{-rs}f(X_s)+e^{-rs}(\mathbf{A}-r)w_{\tau^{\Lambda,\ubar{x},\bar{x}}}(X_s,r)I_{\{X_s\neq \bar{x}\}}\right)ds\right)dF(r)\\
&\enskip-\int_0^\infty\E_{\bar{x}}\left(\frac{1}{2}\int_0^{\tau_h}e^{-rs}(g'(\bar{x}+)-w'_{\tau^{\Lambda,\ubar{x},\bar{x}}}(\bar{x}-,r))dl^{\bar{x}}_t\right)dF(r).
\end{align*} 
Using smooth fit \eqref{mainthmcond4} we obtain for the second term, with some calculations (using, e.g., $\int_0^\infty w'_{\tau^{\Lambda,\ubar{x},\bar{x}}}(\bar{x}-,r)dF(r)= g'(\bar{x}+)$), that 
\begin{align*}
&\left|\frac{\int_0^\infty\E_{\bar{x}}\left(\frac{1}{2}\int_0^{\tau_h}e^{-rs}(g'(\bar{x}+)-w'_{\tau^{\Lambda,\ubar{x},\bar{x}}}(\bar{x}-,r))dl^{\bar{x}}_t\right)dF(r)}{E_{\bar{x}}(\tau_h)}\right|\\
&\enskip=\left|\frac{\int_0^\infty\E_{\bar{x}}\left(\frac{1}{2}\int_0^{\tau_h}(1-e^{-rs})(g'(\bar{x}+)-w'_{\tau^{\Lambda,\ubar{x},\bar{x}}}(\bar{x}-,r))dl^{\bar{x}}_t\right)dF(r)}{E_{\bar{x}}(\tau_h)}\right|\\&\enskip \leq\left|\int_0^\infty r(g'(\bar{x}+)-w'_{\tau^{\Lambda,\ubar{x},\bar{x}}}(\bar{x}-,r))dF(r)\right|\frac{E_{\bar{x}}(\tau_hl_{\bar{x}}^{\tau_h})}{E_{\bar{x}}(\tau_h)}\rightarrow 0
\end{align*}
as $h\searrow 0$. For the first term we note that using, e.g, \eqref{prop_help_diff}, we obtain (for $h<\bar{h}$)
 \begin{align*}
&\int_0^\infty\E_{\bar{x}}\left(-\int_0^{\tau_h}\left(e^{-rs}f(X_s)+e^{-rs}(\mathbf{A}-r)w_{\tau^{\Lambda,\ubar{x},\bar{x}}}(X_s,r)I_{\{X_s\neq \bar{x}\}}\right)ds\right)dF(r)\\
&\enskip=-\int_0^\infty\E_{\bar{x}}\left(\int_0^{\tau_h}e^{-rs}\left(f(X_s)I_{\{X_s\geq \bar{x}\}}+(\mathbf{A}-r)w_{\tau^{\Lambda,\ubar{x},\bar{x}}}(X_s,r)I_{\{X_s> \bar{x}\}}\right)ds\right)dF(r)\\
&\enskip= \int_0^\infty\E_{\bar{x}}\left(\int_0^{\tau_h}(1-e^{-rs})\left(f(X_s)I_{\{X_s\geq \bar{x}\}}+(\mathbf{A}-r)g(X_s)I_{\{X_s> \bar{x}\}}\right)ds\right)dF(r)\\
&\enskip\enskip- \E_{\bar{x}}\left(\int_0^{\tau_h}f(X_s)I_{\{X_s\geq \bar{x}\}}+\left(\mathbf{A}g(X_s)-g(X_s)\int_0^\infty r dF(r)\right)I_{\{X_s> \bar{x}\}}ds\right)\\
&\enskip= (A)+(B).
\end{align*}
For $(A)$, we find, using the usual arguments, that 
\begin{align*}
\frac{(A)}{\E_{\bar{x}}(\tau_h)}&\leq\frac{ \int_0^\infty\E_{\bar{x}}\left(\int_0^{\tau_h}rs
|f(X_s)I_{\{X_s\geq \bar{x}\}}+(\mathbf{A}-r)g(X_s)I_{\{X_s> \bar{x}\}}|
ds\right)dF(r)}{\E_{\bar{x}}(\tau_h)}\\
&\leq 
\sup_{y_1,y_2,y_3\in [\bar{x},\bar{x}+h]}
\left(
\int_0^\infty rdF(r)f(y_1)+
|\mathbf{A}g(y_2)|
+\int_0^\infty r^2dF(r)g(y_3)
\right)
\frac{\E_{\bar{x}}(\tau_h^2)}{\E_{\bar{x}}(\tau_h)}\rightarrow 0
\end{align*}
as $h\searrow 0$. For $(B)$, we obtain, using $f(x)\geq 0$ and \eqref{mainthmcond3}, 
\begin{align*}
\E_{\bar{x}}\left(\int_0^{\tau_h}f(X_s)I_{\{X_s\geq \bar{x}\}}+\left(\mathbf{A}g(X_s)-g(X_s)\int_0^\infty r dF(r)\right)I_{\{X_s> \bar{x}\}}ds\right)\geq 0
\end{align*}
for all $h<\bar{h}$. This gives us
\begin{align*}
\limsup_{h\searrow 0}\frac{-\E_{\bar{x}}\left(\int_0^{\tau_h}e^{-rs}\left(f(X_s)I_{\{X_s\geq \bar{x}\}}+\left(\mathbf{A}g(X_s)-g(X_s)\int_0^\infty rdF(r)\right)I_{\{X_s> \bar{x}\}}\right)ds\right)}{\E_{\bar{x}}(\tau_h)}\leq 0.
\end{align*}
The result follows.
\end{proof}

\begin{proof} (of Theorem \ref{main_thm}) 
We will show that the equilibrium condition \eqref{eqdef2} is satisfied for each $\tau^{\Psi,\mathcal{X},D}\in \mathcal{N}$ and each $x \in (\alpha,\beta)$ by considering different cases for $x$. For $x\in D$: 
\begin{itemize} 
\item  If $x \in (\alpha,\ubar{x})$, then the left side of \eqref{eqdef2} is, by Proposition \ref{prop_CD_local} and Proposition \ref{prop_CD}, smaller than zero by condition \eqref{mainthmcond1} (where we also used that $\lambda(x)=0$ for $x \in (\alpha,\ubar{x})$). Hence, \eqref{eqdef2} holds. 

\item  If $x \in [\ubar{x},\bar{x})$, then the left side of \eqref{eqdef2} is using Proposition \ref{prop_lower_border} equal to 0 by \eqref{mainthmcond2} and \eqref{mainthmcond4}. 
Hence, \eqref{eqdef2} holds. 

\item If $x \in [\bar{x},\beta)$, then Proposition \ref{prop_CcD} (together  with \eqref{mainthmcond3}) and Proposition \ref{prop_pCD} implies that  \eqref{eqdef2} holds. 

\end{itemize}
For $x\notin D$, the numerator in the left side of \eqref{eqdef2} is $J_{\tau^{\Lambda,\ubar{x},\bar{x}}}(x) - g(x)$ and hence \eqref{mainthmcond1} and \eqref{mainthmcond2}  implies that  \eqref{eqdef2} holds for $x\in (\alpha,\bar{x})$. In the case that $x\geq \bar{x}$ the numerator is zero. 
\end{proof}

\subsection{Further results for mixed threshold strategies}\label{sec:mixed-th-strat}
In this section we derive further results for mixed threshold strategies $\tau^{\Lambda,\ubar{x},\bar{x}}$, under certain differentiability assumptions. In particular, Theorem \ref{theorem_ansatz}, see also Remark \ref{ansatz_remark}, establishes a differential equation for
the intensity functions $\lambda(\cdot)$ and the cost functions $x\mapsto w_{\tau^{\Lambda,\ubar{x},\bar{x}}}(x,r)$, which is satisfied by $J_{\tau^{\Lambda,\ubar{x},\bar{x}}}(\cdot)$; while Theorem \ref{local_ansatz} finds that $x\mapsto w_{\tau^{\Lambda,\ubar{x},\bar{x}}}(x,r)$ generally has a jump in its derivative at the local time push point $\ubar{x}$ and gives an expression for this jump. We use these results to find an equilibrium candidate for problem \eqref{cost-func-real-op} in Section \ref{sec:real-option-problem}.

\begin{theorem}\label{theorem_ansatz}
Let $\tau^{\Lambda,\ubar{x},\bar{x}} \in \cal N$ be a mixed threshold stopping time with a stopping intensity function $\lambda(\cdot)$ that is continuous on $(\ubar{x},\bar{x})$. Suppose each function $x \mapsto w_{\tau^{\Lambda,\ubar{x},\bar{x}}}(x,r)$ is twice continuously differentiable in $x$ on $(\ubar{x},\bar{x})$. Then, for $x\in (\ubar{x},\bar{x})$, it holds that
\begin{align}\label{ansatz_eq}
f(x)+\mathbf{A}J_{\tau^{\Lambda,\ubar{x},\bar{x}}}(x)-\int_0^\infty rw_{\tau^{\Lambda,\ubar{x},\bar{x}}}(x,r)dF(r)=\lambda(x)\left(J_{\tau^{\Lambda,\ubar{x},\bar{x}}}(x)-g(x)\right).
\end{align}
\end{theorem}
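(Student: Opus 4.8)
The plan is to express $w_{\tau^{\Lambda,\ubar{x},\bar{x}}}(x,r)$ via a Feynman--Kac type decomposition for the mixed stopping time, differentiate suitably, and then integrate against $dF(r)$. More precisely, for fixed $r$ and $x \in (\ubar{x},\bar{x})$, I would start from the definition \eqref{w_eq} applied to $\tau = \tau^{\Lambda,\ubar{x},\bar{x}}$. On the interval $(\ubar{x},\bar{x})$ the local time term in $\Lambda_t$ does not contribute (since $\ubar{x}$ is not visited before exiting a small neighbourhood), so the stopping is governed purely by the Lebesgue-density intensity $\lambda(X_t)$ together with the boundary behaviour. Conditioning on the path of $X$ as in Proposition \ref{dist-for-Lambda:prop}, and using Proposition \ref{centralProposition} to convert the ``stop-by-intensity'' contribution $\E_x(e^{-r\tau^\Lambda} g(X_{\tau^\Lambda}) I_{\{\tau^\Lambda \le \tau^D\}})$ into $\E_x(\int_0^{\tau} e^{-rt} g(X_t)\, \lambda(X_t)\, dt)$, one obtains, for a small exit time $\tau_h$ from $[x-h,x+h] \subset (\ubar{x},\bar{x})$, an identity of the form
\begin{align*}
w_{\tau^{\Lambda,\ubar{x},\bar{x}}}(x,r) = \E_x\!\left( \int_0^{\tau_h} e^{-rt}\big(f(X_t) + \lambda(X_t) g(X_t)\big)\,dt + e^{-r\tau_h} w_{\tau^{\Lambda,\ubar{x},\bar{x}}}(X_{\tau_h},r) - \int_0^{\tau_h} e^{-rt}\lambda(X_t)\, w_{\tau^{\Lambda,\ubar{x},\bar{x}}}(X_t,r)\, dt \right),
\end{align*}
i.e. a dynamic programming relation reflecting that on $[0,\tau_h]$ the process either gets killed at rate $\lambda(X_t)$ (collecting $g$) or survives to $\tau_h$ (collecting the continuation value), while running cost $f$ and discounting at rate $r$ accrue throughout.

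Next I would apply It\^o's formula (Dynkin) to $e^{-rt} w_{\tau^{\Lambda,\ubar{x},\bar{x}}}(X_t,r)$ up to $\tau_h$, using the assumed $\mathcal C^2$ regularity of $x \mapsto w_{\tau^{\Lambda,\ubar{x},\bar{x}}}(x,r)$ on $(\ubar{x},\bar{x})$, to rewrite $\E_x(e^{-r\tau_h} w(X_{\tau_h},r)) - w(x,r)$ as $\E_x(\int_0^{\tau_h} e^{-rt}(\mathbf A - r) w(X_t,r)\,dt)$. Substituting into the DPP identity gives
\begin{align*}
\E_x\!\left( \int_0^{\tau_h} e^{-rt}\Big( f(X_t) + (\mathbf A - r) w_{\tau^{\Lambda,\ubar{x},\bar{x}}}(X_t,r) - \lambda(X_t)\big(w_{\tau^{\Lambda,\ubar{x},\bar{x}}}(X_t,r) - g(X_t)\big) \Big)\,dt \right) = 0.
\end{align*}
Dividing by $\E_x(\tau_h)$ and letting $h \searrow 0$, using continuity of $f$, $\lambda$, $w$, $\mathbf A w$ at $x$ together with the standard facts $\E_x(\tau_h)/\E_x(\tau_h) = 1$ and $\E_x(\int_0^{\tau_h}(e^{-rt}-1)(\cdots)dt)/\E_x(\tau_h) \to 0$ (as used repeatedly in the propositions above, e.g. Proposition \ref{prop_CcD}), yields the pointwise identity
\begin{align*}
f(x) + \mathbf A w_{\tau^{\Lambda,\ubar{x},\bar{x}}}(x,r) - r\, w_{\tau^{\Lambda,\ubar{x},\bar{x}}}(x,r) = \lambda(x)\big(w_{\tau^{\Lambda,\ubar{x},\bar{x}}}(x,r) - g(x)\big).
\end{align*}
Finally, integrating this relation against $dF(r)$ over $[0,\infty)$, using $\int_0^\infty w_{\tau^{\Lambda,\ubar{x},\bar{x}}}(x,r)\,dF(r) = J_{\tau^{\Lambda,\ubar{x},\bar{x}}}(x)$ from \eqref{J-in-terms-w}, the linearity of $\mathbf A$, and the fact that $\int_0^\infty g(x)\,dF(r) = g(x)$ and $\int_0^\infty \lambda(x)\,dF(r) = \lambda(x)$ (since $F$ is a distribution function), delivers exactly \eqref{ansatz_eq}. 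Interchanging $\mathbf A$ with $\int_0^\infty (\cdot)\,dF(r)$ is justified by the local boundedness/integrability assumptions on $C(x,r)$ and $rC(x,r)$ in the Assumption, which also guarantee the finiteness of all the expectations appearing.

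The main obstacle I anticipate is making the dynamic programming identity rigorous, i.e. carefully justifying the conditioning-on-the-path argument that replaces the randomized killing by an additive $\lambda(X_t)\,dt$ discount-like term — this is precisely the content of Propositions \ref{dist-for-Lambda:prop} and \ref{centralProposition}, so the work is in assembling them correctly on the stochastic interval $[0,\tau_h]$ and checking that the local-time part of $\Lambda$ genuinely drops out (because $[x-h,x+h]$ can be chosen inside $(\ubar{x},\bar{x})$, so $l^{\ubar{x}}$ is constant up to $\tau_h$). A secondary technical point is the interchange of the $dF(r)$-integral with both $\mathbf A$ and with the limit $h \searrow 0$; both are controlled by the integrability hypotheses in the Assumption, but should be stated explicitly. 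Everything else is the same ``divide by $\E_x(\tau_h)$ and send $h \searrow 0$'' machinery already developed for Lemma \ref{main_lemma_local} and the propositions in Section \ref{sec:ver-res}.
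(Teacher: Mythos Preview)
Your approach is correct and takes a genuinely different route from the paper's. The paper does not derive a DPP/Feynman--Kac identity for $w_{\tau^{\Lambda,\ubar{x},\bar{x}}}(\cdot,r)$ at all; instead it computes the single limit
\[
\lim_{h\searrow 0}\frac{J_{\tau^{\Lambda,\ubar{x},\bar{x}}\circ \theta_{\tau_h}+\tau_h}(x)-J_{\tau^{\Lambda,\ubar{x},\bar{x}}}(x)}{\E_x(\tau_h)}
\]
in two different ways --- once via the second statement of Lemma~\ref{main_lemma}, which gives $\lambda(x)(J_{\tau^{\Lambda,\ubar{x},\bar{x}}}(x)-g(x))$, and once via Lemma~\ref{mp_lemma} combined with It\^o on each $w_{\tau^{\Lambda,\ubar{x},\bar{x}}}(\cdot,r)$, which gives $f(x)+\mathbf{A}J_{\tau^{\Lambda,\ubar{x},\bar{x}}}(x)-\int_0^\infty r\,w_{\tau^{\Lambda,\ubar{x},\bar{x}}}(x,r)\,dF(r)$ --- and equates them. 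So the paper works directly at the $J$-level and reuses machinery already built for the verification theorem, whereas you work $r$-by-$r$ and obtain the stronger intermediate identity $f(x)+(\mathbf{A}-r)w_{\tau^{\Lambda,\ubar{x},\bar{x}}}(x,r)=\lambda(x)(w_{\tau^{\Lambda,\ubar{x},\bar{x}}}(x,r)-g(x))$; this is in fact exactly what the paper later needs in Section~\ref{sec:candidate} but only justifies heuristically (``assuming for a moment that $F(r)$ is concentrated on $\{r_i\}$''), so your route has the advantage of making that step rigorous.

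One point to tighten: the displayed DPP identity you write is not exact --- the correct decomposition carries the survival factor $e^{-\int_0^t\lambda(X_s)ds}$ on each term (equivalently, Proposition~\ref{centralProposition} gives an integral up to $\tau^{\Lambda}\wedge\tau_h$, not up to $\tau_h$). Your version agrees with the correct one only to first order in $\tau_h$, which is all you need after dividing by $\E_x(\tau_h)$ and sending $h\searrow0$, but you should either write the exact identity and apply It\^o to $e^{-rt-\int_0^t\lambda(X_s)ds}w_{\tau^{\Lambda,\ubar{x},\bar{x}}}(X_t,r)$, or state explicitly that the discrepancy is $o(\E_x(\tau_h))$.
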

\begin{proof}
By Lemma \ref{main_lemma}  we find that 
\begin{align*}
\lim_{h\searrow 0}\frac{J_{\tau^{\Lambda,\ubar{x},\bar{x}}\circ \theta_{\tau_h}+\tau_h}(x)-J_{\tau^{\Lambda,\ubar{x},\bar{x}}}(x)}{\E_x(\tau_h)}=\lambda(x)\left(J_{\tau^{\Lambda,\ubar{x},\bar{x}}}(x)-g(x)\right). 
\end{align*}
On the other hand we obtain by Lemma \ref{mp_lemma} and Itô's formula that
\begin{align*}
& J_{\tau^{\Lambda,\ubar{x},\bar{x}}\circ \theta_{\tau_h}+\tau_h}(x)\\
&\enskip =\int_0^\infty\E_x\left(  \int_0^{\tau_h}e^{-rs}f(X_s)ds+e^{-r\tau_h}w_{\tau^{\Lambda,\ubar{x},\bar{x}}}(X_{\tau_h},r)\right)dF(r)\\
&\enskip =\int_0^\infty\E_x\left(  \int_0^{\tau_h}e^{-rs}f(X_s)ds+w_{\tau^{\Lambda,\ubar{x},\bar{x}}}(x,r)+\int_0^{\tau_h}e^{-rt}(\mathbf{A}-r)w_{\tau^{\Lambda,\ubar{x},\bar{x}}}(X_t,r)dt\right)dF(r)\\
&\enskip =J_{\tau^{\Lambda,\ubar{x},\bar{x}}}(x)+\int_0^\infty\E_x\left(  \int_0^{\tau_h}e^{-rs}f(X_s)ds+\int_0^{\tau_h}e^{-rt}(\mathbf{A}-r)w_{\tau^{\Lambda,\ubar{x},\bar{x}}}(X_t,r)dt\right)dF(r).
\end{align*}
Hence, using the usual arguments from the proofs of the results in the previous section we find 
\begin{align*}
\lim_{h\searrow 0}\frac{J_{\tau^{\Lambda,\ubar{x},\bar{x}}\circ \theta_{\tau_h}+\tau_h}(x)-J_{\tau^{\Lambda,\ubar{x},\bar{x}}}(x)}{\E_x(\tau_h)}=f(x)+\mathbf{A}J_{\tau^{\Lambda,\ubar{x},\bar{x}}}(x)-\int_0^\infty rw_{\tau^{\Lambda,\ubar{x},\bar{x}}}(x,r)dF(r).
\end{align*}
\end{proof}

\begin{remark}\label{ansatz_remark} 
Suppose $\tau^{\Lambda,\ubar{x},\bar{x}}$ is a mixed threshold equilibrium stopping strategy such that 
the conditions of Theorem \ref {theorem_ansatz} as well as \eqref{mainthmcond1}--\eqref{mainthmcond4} are satified. Then, the corresponding functions $x \mapsto w_{\tau^{\Lambda,\ubar{x},\bar{x}}}(x,r)$ satisfy \eqref{ansatz_eq}, which in this case simplifies to
\begin{align*}
f(x)+\mathbf{A}g(x)-\int_0^\infty rw_{\tau^{\Lambda,\ubar{x},\bar{x}}}(x,r)dF(r)=0,
\end{align*}
for  $x \in (\ubar{x},\bar{x})$. This observation can be used to determine the functions $x \mapsto w_{\tau^{\Lambda,\ubar{x},\bar{x}}}(x,r)$ in the randomization region $(\ubar{x},\bar{x})$ when searching for an equilibrium; see the beginning of Section \ref{sec:candidate} for an example. 
\end{remark}

\begin{theorem}\label{local_ansatz}
Let $\tau^{\Lambda,\ubar{x},\bar{x}} \in \cal N$ be a mixed threshold stopping time and suppose, for a fixed $r>0$, that the function $x \mapsto w_{\tau^{\Lambda,\ubar{x},\bar{x}}}(x,r)$ is a member of 
$\mathcal{C}^2((\ubar{x}-h,\ubar{x})\cup (\ubar{x},\ubar{x}+h)) \cap \mathcal{C}(\alpha,\beta)$ for some $h>0$ and that its left and right derivatives exist. Then, 
\begin{align*}
2\lambda(w_{\tau^{\Lambda,\ubar{x},\bar{x}}}(\ubar{x},r)-g(\ubar{x}))=
w'_{\tau^{\Lambda,\ubar{x},\bar{x}}}(\ubar{x}+,r) - w'_{\tau^{\Lambda,\ubar{x},\bar{x}}}(\ubar{x}-,r).
\end{align*}
\end{theorem}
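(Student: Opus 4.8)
The plan is to turn the local time push at $\ubar x$ into a local time term inside an It\^o/local time--space decomposition of a martingale, and to read the claimed jump condition directly off the coefficient of that term. Throughout write $\tau:=\tau^{\Lambda,\ubar x,\bar x}$, fix $r>0$, put $\tau^{\bar x}:=\inf\{t\ge0:X_t\ge\bar x\}$ and $\Delta:=w'_{\tau}(\ubar x+,r)-w'_{\tau}(\ubar x-,r)$, and shrink $h>0$ so that $[\ubar x-h,\ubar x+h]\subset(\alpha,\bar x)$. Conditioning on $\mathcal F^X_\infty$ exactly as in the proof of Proposition~\ref{centralProposition}, and using $\mathbb P_x(\tau^\Lambda>s\mid\mathcal F^X_\infty)=e^{-\Lambda_s}$ from Proposition~\ref{dist-for-Lambda:prop}, gives the ``killed'' representation
\[
w_{\tau}(x,r)=\E_x\!\left(\int_0^{\tau^{\bar x}}e^{-rs-\Lambda_s}\big(f(X_s)\,ds+g(X_s)\,d\Lambda_s\big)+e^{-r\tau^{\bar x}-\Lambda_{\tau^{\bar x}}}g(X_{\tau^{\bar x}})\right),
\]
which in turn shows that
\[
M_t:=e^{-rt-\Lambda_t}w_{\tau}(X_t,r)+\int_0^t e^{-rs-\Lambda_s}\big(f(X_s)\,ds+g(X_s)\,d\Lambda_s\big)
\]
is, by the Markov property of $X$, an $\mathcal F^X$-martingale on $[0,\tau^{\bar x}]$ (the integrability needed is supplied by our standing assumptions together with the boundedness of $g$).

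I would then apply It\^o's product rule together with the local time--space formula (see, e.g., \cite[p.~75]{Peskir}) to $M$ on $[0,\rho_h]$, where $\rho_h:=\inf\{t\ge0:X_t\notin(\ubar x-h,\ubar x+h)\}\le\tau^{\bar x}$; on this interval $w_{\tau}(\cdot,r)$ is $\mathcal C^2$ away from $\ubar x$ and continuous, $e^{-rt-\Lambda_t}$ is continuous of finite variation, and $d\Lambda_t=\lambda\,dl^{\ubar x}_t+\lambda(X_t)\,dt$. Because $\{t:X_t=\ubar x\}$ is a.s.\ Lebesgue-null, the indicators $I_{\{X_t\neq\ubar x\}}$ may be inserted freely in the $dt$-integrals, the local time--space formula produces only the term $\tfrac12\Delta\,dl^{\ubar x}_t$, and collecting everything gives
\begin{align*}
dM_t&=e^{-rt-\Lambda_t}\Big((\mathbf A-r)w_{\tau}(X_t,r)+f(X_t)+\lambda(X_t)\big(g(X_t)-w_{\tau}(X_t,r)\big)\Big)I_{\{X_t\neq\ubar x\}}\,dt\\
&\quad+dN_t+e^{-rt-\Lambda_t}\Big(\tfrac12\Delta+\lambda\big(g(\ubar x)-w_{\tau}(\ubar x,r)\big)\Big)dl^{\ubar x}_t,
\end{align*}
where $N_t:=\int_0^t e^{-rs-\Lambda_s}\sigma(X_s)w'_{\tau}(X_s,r)\,dW_s$ is a martingale on $[0,\rho_h]$ (its integrand is bounded there), and where in the last term I used that $dl^{\ubar x}_t$ is carried by $\{X_t=\ubar x\}$.

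Since both $M$ stopped at $\rho_h$ and $N$ are martingales, the remaining finite-variation part of $dM_t$ is a continuous local martingale of finite variation, hence vanishes identically on $[0,\rho_h]$. Its absolutely continuous (in $t$) part and its $dl^{\ubar x}$-part are mutually singular, because $l^{\ubar x}$ a.s.\ increases only on a Lebesgue-null set of times; hence each vanishes separately. Vanishing of the $dt$-part recovers, for the fixed $r$, the differential equations $(\mathbf A-r)w_{\tau}(\cdot,r)+f=0$ on $(\ubar x-h,\ubar x)$ and $(\mathbf A-r)w_{\tau}(\cdot,r)+f=\lambda(\cdot)(w_{\tau}(\cdot,r)-g)$ on $(\ubar x,\ubar x+h)$ (the $r$-wise counterparts of Theorem~\ref{theorem_ansatz}), while vanishing of the $dl^{\ubar x}$-part gives
\[
\Big(\tfrac12\Delta+\lambda\big(g(\ubar x)-w_{\tau}(\ubar x,r)\big)\Big)\int_0^t e^{-rs-\Lambda_s}\,dl^{\ubar x}_s=0,\qquad 0\le t\le\rho_h,\ \ \mathbb P_x\text{-a.s.}
\]
Choosing $X_0=\ubar x$ (which lies in $(\alpha,\beta)$, as otherwise the one-sided derivative $w'_{\tau}(\ubar x-,r)$ is vacuous), the integral $\int_0^t e^{-rs-\Lambda_s}\,dl^{\ubar x}_s$ is strictly positive for every $t>0$, since $\sigma(\ubar x)>0$ forces $l^{\ubar x}_t>0$; as the bracketed quantity is a constant it must therefore equal $0$, i.e.\ $\tfrac12\Delta=\lambda\big(w_{\tau}(\ubar x,r)-g(\ubar x)\big)$, which is exactly the asserted identity.

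The step I expect to be the main obstacle is the sign bookkeeping in the It\^o/local time--space computation: it is crucial that the formula is applied to $w_{\tau}(X_t,r)$ itself (not to a reward-minus-value quantity), that the kink is attributed to $w_{\tau}(\cdot,r)$ rather than to $g$, and that the ``killing'' contribution $(g-w_{\tau})\,d\Lambda$ and the $\tfrac12\Delta\,dl^{\ubar x}$ term enter with the correct signs, so that the coefficient of $dl^{\ubar x}_t$ comes out as $\tfrac12\Delta+\lambda\big(g(\ubar x)-w_{\tau}(\ubar x,r)\big)$ and not with a flipped sign. Secondary, more routine, technical points are confirming that $M$ is a genuine martingale up to $\tau^{\bar x}$ (or localizing so that only the local martingale property is used) and that $X$ started at $\ubar x$ really does accumulate local time there, so the displayed integral is non-degenerate.
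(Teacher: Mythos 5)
Your proof is correct, but it takes a genuinely different route from the paper's. The paper establishes the identity by computing the single limit
\[
\lim_{h\searrow 0}\frac{w_{\tau^{\Lambda,\ubar{x},\bar{x}}\circ \theta_{\tau_h}+\tau_h}(\ubar{x},r)-w_{\tau^{\Lambda,\ubar{x},\bar{x}}}(\ubar{x},r)}{\sqrt{\E_{\ubar{x}}(\tau_h)}}
\]
in two ways: once by applying the local time--space formula inside the Markov-property representation of Lemma~\ref{mp_lemma} (yielding $\tfrac12\Delta\,\sigma(\ubar{x})$), and once by invoking the perturbation-limit Lemma~\ref{main_lemma_local} together with \eqref{sec_resul_lambda} (yielding $\lambda\,\sigma(\ubar{x})\bigl(w_{\tau^{\Lambda,\ubar{x},\bar{x}}}(\ubar{x},r)-g(\ubar{x})\bigr)$), and then equating the two. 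Your argument instead passes from the randomized stopping time to the survival/killed representation of $w_{\tau}(\cdot,r)$, builds the explicit martingale $M$, decomposes it with It\^o's product rule and local time--space, and reads the identity off the fact that the $dl^{\ubar{x}}$-coefficient in the vanishing finite-variation part must be zero (exploiting mutual singularity of $dt$ and $dl^{\ubar{x}}$). Both proofs need the same differentiability hypotheses and the same local time--space ingredient; what differs is how the factor $\lambda\bigl(w_{\tau}(\ubar{x},r)-g(\ubar{x})\bigr)$ is produced. Your approach is more self-contained in that it bypasses the substantial machinery of Lemma~\ref{main_lemma_local}, and as a side product the $dt$-part of the decomposition simultaneously recovers the $r$-wise analogue of the ODE in Theorem~\ref{theorem_ansatz}; the paper's route is more uniform with the rest of Section~\ref{sec:main}, which is organized around these $\tau_h$-perturbation limits. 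The only points you should make sure to spell out are (i) that $\rho_h$ keeps you strictly inside $(\alpha,\bar{x})$ so the only kink seen by the local time--space formula is at $\ubar{x}$, and (ii) that the integrability required for $M$ and $N$ to be genuine martingales on $[0,\rho_h]$ follows from the local boundedness of $f,\lambda(\cdot),w_{\tau}(\cdot,r)$ on $[\ubar{x}-h,\ubar{x}+h]$ and $e^{-rt-\Lambda_t}\le 1$---both of which you have indicated but should state explicitly.
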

\begin{proof}
Using similar arguments as in the previous proofs (as well as Lemma \ref{lemma-tau-h-oct}) and the local time-space formula, we obtain 
\begin{align*}
&\lim_{h\searrow 0}\frac{w_{\tau^{\Lambda,\ubar{x},\bar{x}}\circ \theta_{\tau_h}+\tau_h}({\ubar{x}},r)-w_{\tau^{\Lambda,\ubar{x},\bar{x}}}(\ubar{x},r)}{\sqrt{\E_{\ubar{x}}(\tau_h)}}\\
&\enskip=\lim_{h\searrow 0}\frac{\E_{\ubar{x}}\left(
\int_0^{\tau_h}e^{-rs}f(X_s)ds+e^{-\tau_h}w_{\tau^{\Lambda,\ubar{x},\bar{x}}}(X_{\tau_h},r)
\right)-w_{\tau^{\Lambda,\ubar{x},\bar{x}}}(\ubar{x},r)}{\sqrt{\E_{\ubar{x}}(\tau_h)}}\\
&\enskip=\lim_{h\searrow 0}\frac{\E_{\ubar{x}}\left(
e^{-\tau_h}w_{\tau^{\Lambda,\ubar{x},\bar{x}}}(X_{\tau_h},r)
\right)-w_{\tau^{\Lambda,\ubar{x},\bar{x}}}(\ubar{x},r)}{\sqrt{\E_{\ubar{x}}(\tau_h)}}\\
&\enskip=\lim_{h\searrow 0}\frac{\E_{\ubar{x}}\left(\int_0^{\tau_h}e^{-rs}(\mathbf{A}-r)w_{\tau^{\Lambda,\ubar{x},\bar{x}}}(X_{s},r)I_{\{X_s\neq \ubar{x}\}}ds\right)}{\sqrt{\E_{\ubar{x}}(\tau_h)}}+\lim_{h\searrow 0}\frac{1}{2}\frac{\E_{\ubar{x}}\left(\int_0^{\tau_h}e^{-rs}\left(w'_{\tau^{\Lambda,\ubar{x},\bar{x}}}(\ubar{x}+,r)-w'_{\tau^{\Lambda,\ubar{x},\bar{x}}}(\ubar{x}-,r)\right)dl_{s}^{\ubar{x}}\right)}{\sqrt{\E_{\ubar{x}}(\tau_h)}}\\
&\enskip=
\frac{1}{2} \left(w'_{\tau^{\Lambda,\ubar{x},\bar{x}}}(\ubar{x}+,r)-w'_{\tau^{\Lambda,\ubar{x},\bar{x}}}(\ubar{x}-,r)\right)
\lim_{h\searrow 0}\frac{
\E_{\ubar{x}}\left(l_{\tau_h}^{\ubar{x}}\right)
}{\sqrt{\E_{\ubar{x}}(\tau_h)}}\\
&\enskip=\frac{1}{2}\left(w'_{\tau^{\Lambda,\ubar{x},\bar{x}}}(\ubar{x}+,r)-w'_{\tau^{\Lambda,\ubar{x},\bar{x}}}(\ubar{x}-,r)\right)\sigma(\ubar{x}).
\end{align*}
Furthermore, using Lemma \ref{main_lemma_local} and \eqref{sec_resul_lambda} we obtain 
\begin{align*}
\lim_{h\searrow 0}\frac{w_{\tau^{\Lambda,\ubar{x},\bar{x}}\circ \theta_{\tau_h}+\tau_h}({\ubar{x}},r)-w_{\tau^{\Lambda,\ubar{x},\bar{x}}}(\ubar{x},r)}{\sqrt{\E_{\ubar{x}}(\tau_h)}}=\lambda\sigma({\ubar{x}})\left(w_{\tau^{\Lambda,\ubar{x},\bar{x}}}(\ubar{x},r)-g({\ubar{x}})\right).
\end{align*}
The result follows.
\end{proof}

\section{The real options problem}\label{sec:real-option-problem}
In \cite{tan2021failure} a real options problem is studied using pure stopping strategies; see Section \ref{intro} for a description of this problem and the findings in that paper. In this section we study this problem using the theoretical framework for mixed stopping strategies developed in previous sections. In particular, we consider in this section the cost function \eqref{cost-func-real-op} and the discount function $h$ given by a mixture of two exponential discounting factors according to 
\begin{align}\label{disc-F-realOp}
h(t)=pe^{-r_1t}+(1-p)e^{-r_2t}, \enskip r_2>r_1>0, \enskip p\in (0,1).
\end{align}
This implies that the cost function can be written as
\begin{align*}
J_\tau(x)
& = \int_0^\infty w_{\tau^{\Lambda,\ubar{x},\bar{x}}}(x,r)dF(r)\\
& = pw_\tau(x,r_1) + (1-p)w_\tau(x,r_2)
\end{align*}
where
\begin{align} \label{w-def-realoption}
w_\tau(x,r_i)=\E_x\left(\int_0^\tau e^{-r_is}X_sds+e^{-r_i\tau}K\right).
\end{align}
As in \cite{tan2021failure}, we also suppose for simplicity that $X$ is a GBM without drift, i.e., 
\begin{align*}
dX_t = \sigma X_t dW_t, \enskip X_0 =x.
\end{align*}
In Section \ref{sec:candidate} we derive a candidate mixed threshold equilibrium stopping time $\tau^{\Lambda,\ubar{x},\bar{x}}$ for this problem using, e.g., the results in Section \ref{sec:mixed-th-strat}. 
In Section \ref{sec:verification} we verify that the equilibrium candidate is indeed an equilibrium using Theorem \ref{main_thm}. 
We repeatedly use that the state space of $X$ is   $(\alpha,\beta)=(0,\infty)$.  


\subsection{Derivation of a candidate mixed threshold equilibrium stopping time}\label{sec:candidate}
The content of the present section describes how to identify an equilibrium candidate and is mainly of motivational value; 
%
while rigorous results are presented in Section \ref{sec:verification}.
In particular, we derive conditions that must be satisfied by 
(i) the stopping intensity function $\lambda(\cdot)$, 
(ii) the interval $(\ubar x,\bar x)$ on which randomization occurs, and 
(iii) the local time push $\lambda>0$ occurring at $\ubar x$;   
in order for the associated mixed stopping threshold strategy $\tau^{\Lambda,\ubar{x},\bar{x}}$ to satisfy the conditions \eqref{mainthmcond1}--\eqref{mainthmcond4}. This gives us an equilibrium candidate. 

Suppose $\tau^{\Lambda,\ubar{x},\bar{x}}$ satisfies \eqref{mainthmcond1}--\eqref{mainthmcond4}  and is in particular a mixed threshold equilibrium. Using \eqref{w-def-realoption} and Remark \ref{ansatz_remark}, and supposing we have enough differentiability, we find that for $x\in (\ubar{x},\bar{x})$, it holds that  
\begin{align*}
x&=\int_0^\infty r w_{\tau^{\Lambda,\ubar{x},\bar{x}}}(x,r)dF(r)\\
& =pr_1w_{\tau^{\Lambda,\ubar{x},\bar{x}}}(x,r_1)+(1-p)r_2w_{\tau^{\Lambda,\ubar{x},\bar{x}}}(x,r_2)
\end{align*}
and, using \eqref{mainthmcond2}, that
\begin{align*}
K&=J_{\tau^{\Lambda,\ubar{x},\bar{x}}}(x)\\
 & =pw_{\tau^{\Lambda,\ubar{x},\bar{x}}}(x,r_1)+(1-p)w_{\tau^{\Lambda,\ubar{x},\bar{x}}}(x,r_2).
\end{align*}
These two equations correspond, for $x$ fixed, to a linear equation system in $w_{\tau^{\Lambda,\ubar{x},\bar{x}}}(x,r_1)$ and $w_{\tau^{\Lambda,\ubar{x},\bar{x}}}(x,r_2)$ whose solution is
\begin{align*}
\left(\begin{matrix}
w_{\tau^{\Lambda,\ubar{x},\bar{x}}}(x,r_1)\\
w_{\tau^{\Lambda,\ubar{x},\bar{x}}}(x,r_2)
\end{matrix}\right)=\left(\begin{matrix}
pr_1&(1-p)r_2\\
p&1-p
\end{matrix}\right)^{-1}
\left(\begin{matrix}
x\\
K
\end{matrix}\right)
\end{align*}
which implies that 
\begin{align}\label{w-as-linear}
w_{\tau^{\Lambda,\ubar{x},\bar{x}}}(x,r_i)=a_ix+b_i, \mbox{ for } x\in (\ubar{x},\bar{x}), \enskip i=1,2
\end{align}
where 
\begin{align*}
a_1=\frac{1}{p(r_1-r_2)},&\qquad b_1=\frac{-r_2K}{p(r_1-r_2)},\\
a_2=\frac{1}{(1-p)(r_2-r_1)},&\qquad b_2=\frac{-r_1K}{(1-p)(r_2-r_1)}.
\end{align*}
Moreover, it holds on $x\in (\ubar{x},\bar{x})$ that
\begin{align*}
x+\mathbf{A}w_{\tau^{\Lambda,\ubar{x},\bar{x}}}(x,r_i)-r_iw_{\tau^{\Lambda,\ubar{x},\bar{x}}}(x,r_i)=\lambda(x)\left(w_{\tau^{\Lambda,\ubar{x},\bar{x}}}(x,r_i)-K\right), \enskip \mbox{ for } i=1,2,
\end{align*}
which is seen by assuming for a moment that $F(r)$ is concentrated on $\{r_i\}$ and then applying Theorem \ref{theorem_ansatz} separately for $i=1,2$. 
Hence, using also \eqref{w-as-linear} and $\mathbf{A} = \frac{1}{2}\sigma^2x^2\frac{d^2}{dx^2}$, we find
\begin{align*}
x-r_i(a_ix+b_i)=\lambda(x)(a_ix+b_i-K)
\end{align*}
which gives us the candidate equilibrium stopping intensity function
\begin{align} \label{the-intensity-cand}
\lambda(x)& =\frac{(r_ia_i-1)x+r_ib_i}{K-a_ix-b_i}\nonumber\\
& = \frac{(pr_2 + (1-p)r_1)x-r_1r_2K}{K\left(pr_1 + (1-p)r_2\right)-x}\nonumber\\
& = \frac{(pr_2 + (1-p)r_1)x-r_1r_2K}{K\int_0^\infty rdF(r)-x}.
\end{align}
(Note that $i=1,2$ give the same function $\lambda(\cdot)$).  Our ansatz is now to determine the candidate for $\bar{x}$ as the point where $\lambda(\cdot)$ blows up, i.e., 
\begin{align}
\bar{x} \label{the-intensity-x-upper}
=K\int_0^\infty rdF(r).
\end{align}
We now want to identify $x\mapsto w_{\tau^{\Lambda,\ubar{x},\bar{x}}}(x,r_i),i=1,2$ on $[0,\ubar{x}]$. Since $X$ is a GBM we find, using standard theory, that $x\mapsto w_{\tau^{\Lambda,\ubar{x},\bar{x}}}(x,r_i),i=1,2$ (which are for the present specification of our problem defined in \eqref{w-def-realoption}) satisfy the differential equations
\begin{align*}
x+\frac{1}{2}\sigma^2x^2y_i''(x)-r_iy_i(x)=0
\end{align*}
with boundary conditions
\begin{align*}
y_i(0)&=0,\\
y_i(\ubar{x})&=a_i\ubar{x}+b_i.  
\end{align*}
The first boundary condition follows from basic properties of the GBM while the second boundary condition relies on continuity of $x\mapsto w_{\tau^{\Lambda,\ubar{x},\bar{x}}}(x,r_i)$ and \eqref{w-as-linear}. The solutions are given by
\begin{align*}
y_i(x)=\frac{x}{r_i}+D_ix^{\alpha(r_i)}
\end{align*}
where
\begin{align*}
\alpha(r_i):=\frac{1}{2}\left(1+\sqrt{\frac{8r_i}{\sigma^2}+1}\right),\quad D_i:=\frac{a_i\ubar{x}+b_i-\frac{\ubar{x}}{r_i}}{\ubar{x}^{\alpha(r_i)}}.
\end{align*}
Now, in order for our findings to be able to correspond to a mixed threshold strategy we must have that $\lambda(\ubar{x})\geq0$ which means that the numerator in \eqref{the-intensity-cand} should be nonnegative for $x=\ubar{x}$ and hence we need 
\begin{align*}
\ubar{x} & \geq 
\frac{r_1r_2K}{pr_2+(1-p)r_1}. 
\end{align*}
We now determine a candidate for $\ubar{x}$ using the smooth fit condition \eqref{mainthmcond4}, i.e.,
\begin{align*}
J'_{\tau^{\Lambda,\ubar{x},\bar{x}}}(\ubar{x})=0,
\end{align*}
which is equivalent to 
$pw'_{\tau^{\Lambda,\ubar{x},\bar{x}}}(\ubar{x},r_1) + (1-p)w'_{\tau^{\Lambda,\ubar{x},\bar{x}}}(\ubar{x},r_2)
= py_1'(\ubar{x}) + (1-p)y_2' (\ubar{x}) =0$; from which we obtain the candidate
\begin{align}\label{u_x_eq}
\ubar{x}
& 
=\frac{p\alpha(r_1)b_1+(1-p)\alpha(r_2)b_2}{\int_0^\infty\frac{\alpha(r)-1}{r}dF(r)-\left(p\alpha(r_1)a_1+(1-p)\alpha(r_2)a_2\right)}\nonumber\\
& =\frac{ (\alpha(r_2)r_1-\alpha(r_1)r_2)K       }{(r_1-r_2) \int_0^\infty\frac{\alpha(r)-1}{r}dF(r)-\left(\alpha(r_1)-\alpha(r_2) \right)}.
\end{align}
%
%
%
%
Note that our ansatz can only correspond to a mixed threshold equilibrium  if $0 <\ubar{x}<\bar{x}$ and Lemma \ref{help-lemma-GBM} (below) thus implies that we have a candidate for a mixed equilibrium only if the model parameters are such that \eqref{mixed_eq_cond} holds.   
\begin{lemma} \label{help-lemma-GBM} 
The candidate value for $\ubar{x}$ in \eqref{u_x_eq} satisfies 
$\ubar{x}\in\left[\frac{r_1r_2K}{pr_2+(1-p)r_1},\bar{x}\right)$ if and only if
\begin{align}
\int_0^\infty\alpha(r)dF(r)<\int_0^\infty rdF(r)\int_0^\infty\frac{\alpha(r)-1}{r}dF(r).\label{mixed_eq_cond}
\end{align}
\end{lemma}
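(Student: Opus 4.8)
The plan is to reduce the claim to elementary algebra on the closed forms \eqref{the-intensity-x-upper} and \eqref{u_x_eq}, using only that $\alpha(r)=\tfrac12\bigl(1+\sqrt{1+8r/\sigma^2}\bigr)$ is strictly increasing and strictly concave on $[0,\infty)$ with $\alpha(0)=1$, together with the defining identity $\tfrac{\sigma^2}{2}\,\alpha(r)\bigl(\alpha(r)-1\bigr)=r$ (so that $\frac{\alpha(r)-1}{r}=\frac{2}{\sigma^2\alpha(r)}$ and $\frac{\alpha(r)}{r}=\frac{2}{\sigma^2(\alpha(r)-1)}$ are both decreasing). Write $L:=\frac{r_1r_2K}{pr_2+(1-p)r_1}$ for the left endpoint, and abbreviate the moments $m_1:=\int_0^\infty r\,dF(r)=pr_1+(1-p)r_2$, $m_\alpha:=\int_0^\infty\alpha(r)\,dF(r)$ and $m_{\alpha/r}:=\int_0^\infty\frac{\alpha(r)-1}{r}\,dF(r)$. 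Then $\bar{x}=Km_1$, and with $N_0:=\alpha(r_2)r_1-\alpha(r_1)r_2$ and $D:=(r_1-r_2)m_{\alpha/r}-(\alpha(r_1)-\alpha(r_2))$ we have $\ubar{x}=KN_0/D$; in particular $K$ cancels and the statement concerns only $r_1,r_2,p,\sigma$.

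\textbf{Signs.} Monotonicity of $\alpha$ gives $\alpha(r_1)-\alpha(r_2)<0$, and monotonicity of $r\mapsto\alpha(r)/r$ gives $\alpha(r_2)/r_2<\alpha(r_1)/r_1$, i.e.\ $N_0<0$. The decisive point is $D<0$: by strict concavity of $\alpha$ and $\alpha(0)=1$, both $\frac{\alpha(r_1)-1}{r_1}$ and $\frac{\alpha(r_2)-1}{r_2}$ — the slopes of the chords of $\alpha$ over $[0,r_1]$ and $[0,r_2]$ — strictly exceed the slope $\frac{\alpha(r_2)-\alpha(r_1)}{r_2-r_1}$ of the chord over $[r_1,r_2]$; hence so does their convex combination $m_{\alpha/r}$, whence $D=(\alpha(r_2)-\alpha(r_1))-(r_2-r_1)m_{\alpha/r}<0$. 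Consequently $\ubar{x}=KN_0/D>0$ and the ansatz value \eqref{u_x_eq} is well defined.

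\textbf{The two inequalities.} Since $N_0<0$ and $D<0$, clearing denominators in $\ubar{x}<\bar{x}$ (reversing once when multiplying through by $D<0$) turns it into $N_0>m_1D$. Expanding $m_1=pr_1+(1-p)r_2$, the combination $\alpha(r_2)r_1-\alpha(r_1)r_2+m_1(\alpha(r_1)-\alpha(r_2))$ collapses to $(r_1-r_2)m_\alpha$, whereas $m_1D=(r_1-r_2)m_1m_{\alpha/r}-m_1(\alpha(r_1)-\alpha(r_2))$; so $N_0>m_1D$ reads $(r_1-r_2)m_\alpha>(r_1-r_2)m_1m_{\alpha/r}$, and dividing by $r_1-r_2<0$ gives exactly $m_\alpha<m_1m_{\alpha/r}$, i.e.\ \eqref{mixed_eq_cond}. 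For the lower endpoint, put $n_1:=pr_2+(1-p)r_1>0$, so $L=Kr_1r_2/n_1$; clearing denominators in $\ubar{x}\ge L$ (again using $D<0$, then $n_1>0$) makes it $N_0n_1\le r_1r_2D$, and a routine expansion gives $N_0n_1-r_1r_2D=(r_1-r_2)n_1<0$, so this inequality always holds (strictly), i.e.\ $\ubar{x}>L$ unconditionally. Combining the two, $\ubar{x}\in[L,\bar{x})$ is equivalent to $\ubar{x}<\bar{x}$, hence to \eqref{mixed_eq_cond}.

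\textbf{Where the difficulty lies.} The only step that is not purely mechanical is the sign $D<0$, which rests on concavity of $\alpha$; everything else is bookkeeping, whose main trap is tracking the inequality reversals — one for each negative factor $D$ and $r_1-r_2$, and one for $n_1>0$. The two ``collapse'' identities used above are verified most painlessly by noting that both sides are affine in $p$ and comparing the values at $p=0$ and $p=1$.
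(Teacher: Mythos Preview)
Your proof is correct and follows the same approach the paper intends: the paper's own proof is the single line ``This follows from the definitions of $\ubar{x}$ and $\bar{x}$ and calculations,'' and you have carried out exactly those calculations in full. Your key structural observations --- that $N_0<0$ and $D<0$ (the latter via concavity of $\alpha$), the collapse identity $N_0+m_1(\alpha(r_1)-\alpha(r_2))=(r_1-r_2)m_\alpha$, and the unconditional strict inequality $\ubar{x}>L$ --- are all sound, and the affine-in-$p$ trick for verifying the two identities is a clean way to avoid error-prone expansion.
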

\begin{proof} This follows from the definitions of $\ubar{x}$ and $\bar{x}$ and calculations.  
\end{proof}

Finally, using Theorem \ref{local_ansatz} we determine a candidate for the local time push  according to 
\begin{align} \label{the-candidate-lambda}
\lambda 
& =\frac{w'_{\tau^{\Lambda,\ubar{x},\bar{x}}}(\ubar{x}+,r_i)-w'_{\tau^{\Lambda,\ubar{x},\bar{x}}}(\ubar{x}-,r_i)}{2\left(w_{\tau^{\Lambda,\ubar{x},\bar{x}}}(\ubar{x},r_i)-K\right)}\nonumber\\
& =\frac{a_i-\frac{\alpha(r_i)}{\ubar{x}}\left(a_i\ubar{x}+b_i-\frac{\ubar{x}}{r_i}\right)-\frac{1}{r_i}}{2(a_i\ubar{x}+b_i-K)}.
\end{align}
To see that both $i=1,2$ give the same $\lambda$ use 
\begin{align*}
pw'_{\tau^{\Lambda,\ubar{x},\bar{x}}}(\ubar{x}-,r_1)=-(1-p)w'_{\tau^{\Lambda,\ubar{x},\bar{x}}}(\ubar{x}-,r_2)
\end{align*}
(which holds by the smooth fit condition $J'_{\tau^{\Lambda,\ubar{x},\bar{x}}}(\ubar{x})=0$), 
\begin{align*}
p(w_{\tau^{\Lambda,\ubar{x},\bar{x}}}(\ubar{x},r_1)-K) = - (1-p)(w_{\tau^{\Lambda,\ubar{x},\bar{x}}}(\ubar{x},r_2)-K)
\end{align*}
(which holds by $J_{\tau^{\Lambda,\ubar{x},\bar{x}}}(\ubar{x})=K$), and 
\begin{align*}
pw'_{\tau^{\Lambda,\ubar{x},\bar{x}}}(\ubar{x}+,r_1) = -(1-p)pw'_{\tau^{\Lambda,\ubar{x},\bar{x}}}(\ubar{x}+,r_2).
\end{align*}
In order for $\lambda$ in \eqref{the-candidate-lambda} to correspond to a mixed threshold strategy we need that  $\lambda>0$, and we conclude with the following result. 

\begin{proposition} \label{candidate-is-really-a-strat} Suppose \eqref{mixed_eq_cond} holds. Then the candidate threshold mixed equilibrium $\tau^{\Lambda,\ubar{x},\bar{x}}$ corresponding to 
\eqref{the-intensity-cand}, \eqref{the-intensity-x-upper}, \eqref{u_x_eq} and \eqref{the-candidate-lambda} satisfies 
$0 <\ubar{x}<\bar{x}$, 
$\lambda(\cdot)\geq 0$ on $[\ubar{x},\bar{x})$, 
and $\lambda>0$ (and it is therefore a proper mixed threshold strategy, cf. Definition \ref{def:mixed_threshold_stopping_time}). 
\end{proposition}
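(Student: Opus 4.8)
The plan is to verify the three claimed properties in turn, using Lemma \ref{help-lemma-GBM} for the first two and a direct computation for the last. First I would invoke Lemma \ref{help-lemma-GBM}: since \eqref{mixed_eq_cond} is assumed, that lemma gives immediately $\ubar{x} \in \left[\tfrac{r_1 r_2 K}{p r_2 + (1-p) r_1}, \bar{x}\right)$, so in particular $\ubar{x} < \bar{x}$, and $\ubar{x} > 0$ follows since $\tfrac{r_1 r_2 K}{p r_2 + (1-p) r_1} > 0$ (recall $K>0$, $r_1,r_2>0$, $p\in(0,1)$). This settles $0 < \ubar{x} < \bar{x}$. For the sign of $\lambda(\cdot)$ on $[\ubar{x},\bar{x})$: from the closed form \eqref{the-intensity-cand}, $\lambda(x) = \frac{(p r_2 + (1-p) r_1) x - r_1 r_2 K}{K\int_0^\infty r \, dF(r) - x}$, and on $[\ubar{x},\bar{x})$ the denominator is $\bar{x} - x > 0$ by \eqref{the-intensity-x-upper}, while the numerator is nonnegative precisely when $x \geq \tfrac{r_1 r_2 K}{p r_2 + (1-p) r_1}$, which holds for all $x \geq \ubar{x}$ by the lower bound on $\ubar{x}$ just obtained. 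Hence $\lambda(x) \geq 0$ on $[\ubar{x},\bar{x})$, with equality only at the left endpoint when $\ubar{x}$ hits the lower bound.

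The main work is showing $\lambda > 0$ for the constant in \eqref{the-candidate-lambda}. I would take $i=2$ in the expression $\lambda = \frac{a_2 - \frac{\alpha(r_2)}{\ubar{x}}\left(a_2 \ubar{x} + b_2 - \frac{\ubar{x}}{r_2}\right) - \frac{1}{r_2}}{2(a_2 \ubar{x} + b_2 - K)}$ and analyze numerator and denominator separately. For the denominator, $a_2 \ubar{x} + b_2 - K = w_{\tau^{\Lambda,\ubar{x},\bar{x}}}(\ubar{x},r_2) - K$; one checks from the formulas for $a_2,b_2$ that $a_2 \ubar{x} + b_2 - K$ has a definite sign on the relevant range of $\ubar{x}$ — since $a_2 = \tfrac{1}{(1-p)(r_2-r_1)} > 0$ and $b_2 = \tfrac{-r_1 K}{(1-p)(r_2-r_1)}$, one has $a_2 x + b_2 - K = \tfrac{x - r_1 K - (1-p)(r_2-r_1)K}{(1-p)(r_2-r_1)} = \tfrac{x - (p r_1 + (1-p) r_2) K}{(1-p)(r_2-r_1)}$, which is negative exactly when $x < (p r_1 + (1-p) r_2)K = \bar{x}$; so for $\ubar{x} < \bar{x}$ the denominator is strictly negative. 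It then remains to show the numerator is strictly negative. Rewriting the numerator as $\left(a_2 - \tfrac{1}{r_2}\right) - \tfrac{\alpha(r_2)}{\ubar{x}}\left(a_2\ubar{x} + b_2 - \tfrac{\ubar{x}}{r_2}\right) = \left(a_2 - \tfrac{1}{r_2}\right)(1 - \alpha(r_2)) - \tfrac{\alpha(r_2) b_2}{\ubar{x}}$, and using $\alpha(r_2) > 1$, I would substitute the candidate value of $\ubar{x}$ from \eqref{u_x_eq} and reduce the claimed inequality to an inequality purely in the model parameters; I expect it to follow from \eqref{mixed_eq_cond} together with the elementary monotonicity facts $r \mapsto \alpha(r)$ increasing and $r\mapsto \tfrac{\alpha(r)-1}{r}$ having a controllable sign, so that numerator and denominator are both negative and $\lambda>0$.

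The step I expect to be the main obstacle is the last one: verifying that the numerator of \eqref{the-candidate-lambda} is strictly negative. Unlike the first two properties, this does not reduce directly to Lemma \ref{help-lemma-GBM}; it requires substituting the explicit candidate $\ubar{x}$ and disentangling a somewhat involved rational expression in $r_1, r_2, p, \sigma, K$, and then recognizing that the resulting inequality is implied by \eqref{mixed_eq_cond}. A cleaner route, which I would try first, is to avoid the explicit substitution: instead argue that the numerator equals $\sigma^2 \ubar{x} \cdot \tfrac{1}{2}\left(w''_{\tau^{\Lambda,\ubar{x},\bar{x}}}(\ubar{x}-,r_2)\right)$ type quantity — i.e., relate the numerator to the curvature of $y_2$ at $\ubar{x}$ via the ODE $x + \tfrac{1}{2}\sigma^2 x^2 y_2''(x) - r_2 y_2(x) = 0$ — and combine the smooth-fit relation $p y_1'(\ubar{x}) + (1-p) y_2'(\ubar{x}) = 0$ with the sign of $y_i(\ubar{x}) - K$ to pin down the sign of $\lambda$ weighted across $i=1,2$. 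Either way, the routine-but-delicate parameter bookkeeping in this final step is where the real effort lies; the rest is immediate from the already-established lemma.
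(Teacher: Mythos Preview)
Your proposal is correct and follows essentially the same approach as the paper: the paper's own proof reads in its entirety ``The claims follow from Lemma \ref{help-lemma-GBM} and calculations,'' and you have spelled out precisely those calculations---invoking the lemma for $0<\ubar{x}<\bar{x}$ and $\lambda(\cdot)\geq 0$, and then analyzing the sign of numerator and denominator in \eqref{the-candidate-lambda} for $\lambda>0$. Your outline is in fact more detailed than what the paper records.
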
 
\begin{proof}
The claims follow from Lemma \ref{help-lemma-GBM} and calculations. 
\end{proof}

%

\subsection{Verification, equilibrium existence and a smooth fit principle}\label{sec:verification}
Let us first summarize the candidate mixed threshold equilibrium $\tau^{\Lambda,\ubar{x},\bar{x}}$. The candidate interval $[\ubar{x},\bar{x})$ on which randomized stopping occurs is given by 
\begin{align*} 
\ubar{x}=\frac{ (\alpha(r_2)r_1-\alpha(r_1)r_2)K       }{(r_1-r_2) \int_0^\infty\frac{\alpha(r)-1}{r}dF(r)-\left(\alpha(r_1)-\alpha(r_2) \right)}
\end{align*}
and
\begin{align*}
\bar{x} = K\int_0^\infty rdF(r).
\end{align*}
The candidate stopping intensity on $[\ubar{x},\bar{x})$ is given by 
\begin{align*}
\lambda(x)=\frac{(pr_2 + (1-p)r_1)x-r_1r_2K}{K\int_0^\infty rdF(r)-x}
\end{align*}
and the candidate for local time push at $\ubar{x}$ is given by
\begin{align*}
\lambda=\frac{a_1-\frac{\alpha(r_1)}{\ubar{x}}\left(a_1\ubar{x}+b_1-\frac{\ubar{x}}{r_1}\right)-\frac{1}{r_1}}{2(a_1\ubar{x}+b_1-K)}.
\end{align*}

\begin{theorem}\label{example_eq_theorem}

If \eqref{mixed_eq_cond} holds, then the candidate mixed threshold equilibrium is indeed an equilibrium.

\end{theorem}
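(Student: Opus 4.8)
The plan is to verify that the candidate mixed threshold stopping time $\tau^{\Lambda,\ubar{x},\bar{x}}$ summarized above satisfies all the hypotheses of the verification theorem, Theorem \ref{main_thm}, whence it is an equilibrium. By Proposition \ref{candidate-is-really-a-strat} we already know that under \eqref{mixed_eq_cond} the candidate data satisfy $0<\ubar{x}<\bar{x}$, $\lambda(\cdot)\geq 0$ on $[\ubar{x},\bar{x})$, and $\lambda>0$, so $\tau^{\Lambda,\ubar{x},\bar{x}}$ is a genuine mixed threshold stopping time in the sense of Definition \ref{def:mixed_threshold_stopping_time}. The remaining tasks are: (a) to show $\tau^{\Lambda,\ubar{x},\bar{x}}\in\mathcal N$, i.e.\ that $x\mapsto w_{\tau^{\Lambda,\ubar{x},\bar{x}}}(x,r_i)$ is continuous for $i=1,2$ (and that the integrability built into the Assumption holds, which for GBM with bounded terminal payoff $K$ and linear running cost is routine using $\sup_\tau w_\tau(x,r)\leq C(x,r)$); (b) to establish the regularity $J_{\tau^{\Lambda,\ubar{x},\bar{x}}}\in\mathcal C^2((\alpha,\ubar{x})\cup(\ubar{x},\bar{x}))\cap\mathcal C(\alpha,\beta)$; and (c) to check the four variational conditions \eqref{mainthmcond1}--\eqref{mainthmcond4}.

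For (a) and (b), I would identify $w_{\tau^{\Lambda,\ubar{x},\bar{x}}}(x,r_i)$ explicitly on each of the three regions $(0,\ubar{x})$, $(\ubar{x},\bar{x})$, $[\bar{x},\infty)$ and then glue. On $(0,\ubar{x})$ it is $y_i(x)=\tfrac{x}{r_i}+D_ix^{\alpha(r_i)}$, constructed to solve the ODE with $y_i(0)=0$ and $y_i(\ubar{x})=a_i\ubar{x}+b_i$; on $(\ubar{x},\bar{x})$ it is the affine function $a_ix+b_i$ coming from the linear system in Remark \ref{ansatz_remark}; on $[\bar{x},\infty)$ it equals $K$ (the terminal payoff, since on that region the strategy stops immediately). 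Continuity at $\ubar{x}$ holds by the choice of $D_i$; continuity at $\bar{x}$ requires $a_i\bar{x}+b_i=K$, which I would verify is exactly the statement $\bar{x}=K\int_0^\infty r\,dF(r)$ in \eqref{the-intensity-x-upper} (indeed $\lambda(\bar x)$ blowing up is equivalent to $w_{\tau^{\Lambda,\ubar{x},\bar{x}}}(\bar x,r_i)=K$). The $\mathcal C^2$ property on the two open intervals excluding $\ubar{x}$ is immediate since the functions are smooth there; the derivative generally jumps at $\ubar{x}$, which is consistent with Definition \ref{def:mix_strat} and is precisely the content of Theorem \ref{local_ansatz} — and here the magnitude of that jump was used to \emph{define} $\lambda$ in \eqref{the-candidate-lambda}, so it is automatically consistent.

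For (c): condition \eqref{mainthmcond2}, $J_{\tau^{\Lambda,\ubar{x},\bar{x}}}(x)=g(x)=K$ on $[\ubar{x},\bar{x})$, holds by construction of $w$ on $(\ubar{x},\bar{x})$ (and continuity gives it at $\ubar x$); on $[\bar x,\infty)$ one also has $J=K=g$. Condition \eqref{mainthmcond4}, smooth fit $J'_{\tau^{\Lambda,\ubar{x},\bar{x}}}(\ubar{x})=0$, holds because $\ubar{x}$ in \eqref{u_x_eq} was defined by exactly the equation $py_1'(\ubar{x})+(1-p)y_2'(\ubar{x})=0$. Condition \eqref{mainthmcond3}, $f(x)+\mathbf{A}g(x)-g(x)\int_0^\infty r\,dF(r)\geq 0$ on $[\bar{x},\beta)$, becomes $x-K\int_0^\infty r\,dF(r)\geq 0$ (since $\mathbf{A}K=0$), i.e.\ $x\geq\bar{x}$, which is exactly the region — so it holds with equality at $\bar x$ and strictly beyond. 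The one genuinely substantive point is condition \eqref{mainthmcond1}, the strict inequality $J_{\tau^{\Lambda,\ubar{x},\bar{x}}}(x)<g(x)=K$ for $x\in(0,\ubar{x})$. I would reduce this to showing $p y_1(x)+(1-p)y_2(x)<K$ on $(0,\ubar x)$. Write $\phi(x):=py_1(x)+(1-p)y_2(x)$; then $\phi(0)=0<K$, $\phi(\ubar x)=K$ (by the matching condition and the linear system), and $\phi'(\ubar x)=0$ by smooth fit. A clean route is to use the ODEs: $\phi$ satisfies $\mathbf A\phi(x)+x-\int_0^\infty r w(x,r)dF(r)=0$ type relations which, combined with $\int_0^\infty r w(\ubar x,r)dF(r)=\ubar x$ and sign information on $\phi''$, force $\phi$ to approach $K$ from below and monotonically near $\ubar x$; more robustly, I would argue via the probabilistic representation that $w_{\tau^{\Lambda,\ubar{x},\bar{x}}}(x,r_i)$ is the value of a stopping-type functional that is strictly dominated by the immediate-stop value $g=K$ when one is forced to wait in the continuation region $(0,\ubar x)$ (using $f\geq 0$, $h<1$ strictly, and $g$ bounded, so discounting strictly decreases the payoff). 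The main obstacle is making this strictness argument rigorous and uniform on all of $(0,\ubar x)$ — in particular ensuring $\phi$ does not touch $K$ at an interior point — and here I would lean on convexity/monotonicity of $x\mapsto x^{\alpha(r_i)}$ together with the signs of the constants $D_i$ (which follow from Lemma \ref{help-lemma-GBM} and the placement of $\ubar x$) to pin down the shape of $\phi$ on $(0,\ubar x)$. Once \eqref{mainthmcond1}--\eqref{mainthmcond4} and the regularity are in hand, Theorem \ref{main_thm} delivers the conclusion.
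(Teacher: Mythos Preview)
Your overall strategy---verify the hypotheses of Theorem \ref{main_thm}---is correct, and your treatment of conditions \eqref{mainthmcond2}, \eqref{mainthmcond3}, \eqref{mainthmcond4} matches the paper. But there is a genuine gap in step (a)/(b): you assert, but do not prove, that the expectation-defined function $w_{\tau^{\Lambda,\ubar{x},\bar{x}}}(\cdot,r_i)$ of \eqref{w-def-realoption} coincides with the explicit piecewise function (your $y_i$ on $(0,\ubar{x})$, the affine $a_ix+b_i$ on $(\ubar{x},\bar{x})$, and $K$ beyond). Your justifications---that the affine piece ``comes from the linear system in Remark \ref{ansatz_remark}'' and that the derivative jump at $\ubar{x}$ is ``precisely the content of Theorem \ref{local_ansatz}''---run in the wrong direction. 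Section \ref{sec:candidate} opens with ``Suppose $\tau^{\Lambda,\ubar{x},\bar{x}}$ satisfies \eqref{mainthmcond1}--\eqref{mainthmcond4}'' and derives what $w$ \emph{must} look like; Theorem \ref{local_ansatz} takes the regularity of $w$ as a hypothesis. Using these to establish the form and regularity of $w$ is circular. The paper instead writes down the candidate $h_i$ explicitly, applies the local time-space formula to $e^{-r_it}h_i(X_t)$ on $[0,\tau^{\Lambda,\ubar{x},\bar{x}}]$, uses the boundary-value problem that $h_i$ solves together with the jump relation $h_i'(\ubar{x}+)-h_i'(\ubar{x}-)=2\lambda(h_i(\ubar{x})-K)$, and then invokes Proposition \ref{centralProposition} to convert the resulting $d\Lambda_t$-integral into $\E_x\bigl(e^{-r_i\tau^{\Lambda}}(h_i(X_{\tau^{\Lambda}})-K)\bigr)$; rearranging yields $w_{\tau^{\Lambda,\ubar{x},\bar{x}}}(\cdot,r_i)=h_i$. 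A further point you do not address is that, since $\lambda(\cdot)$ blows up at $\bar{x}$, one must show $\tau^\Lambda\leq\tau^D$ a.s.\ (the paper appeals to \cite{mijatovic2012convergence}); without this neither the identification on $[\ubar{x},\bar{x}]$ nor continuity of $w$ at $\bar{x}$ is secured.

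For condition \eqref{mainthmcond1}, your two suggested routes are shakier than what the paper does. The ``probabilistic representation'' heuristic does not obviously give the right sign: waiting decreases the discounted terminal cost $e^{-r\tau}K$ but \emph{adds} the nonnegative running cost $\int_0^\tau e^{-rs}X_s\,ds$, so there is a competition, not a one-sided inequality. The paper's argument is direct: once $J_{\tau^{\Lambda,\ubar{x},\bar{x}}}=ph_1+(1-p)h_2$ is known explicitly on $(0,\ubar{x})$, one verifies that it is a nonlinear concave function there; together with $J_{\tau^{\Lambda,\ubar{x},\bar{x}}}(\ubar{x})=K$ and $J'_{\tau^{\Lambda,\ubar{x},\bar{x}}}(\ubar{x})=0$ this immediately forces $J_{\tau^{\Lambda,\ubar{x},\bar{x}}}(x)<K$ on $(0,\ubar{x})$. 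Your remark about convexity of $x\mapsto x^{\alpha(r_i)}$ and the signs of $D_i$ is in the right neighborhood, but the concavity of the \emph{combination} is the statement you should isolate and prove.
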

\begin{proof}
We first note that the candidate equilibrium is a mixed threshold strategy by Proposition \ref{candidate-is-really-a-strat}. Section \ref{sec:candidate} indicates that $w_{\tau^{\Lambda,\ubar{x},\bar{x}}}(x,r_i)$ defined according to \eqref{w-def-realoption} should satisfy $w_{\tau^{\Lambda,\ubar{x},\bar{x}}}(x,r_i)=h_i(x)$ given that
\begin{align*}
h_i(x):=\begin{cases}D_ix^{\alpha(r_i)}+\frac{x}{r_i}& x\in[0,\ubar{x}]\\
a_ix+b_i&x\in [\ubar{x},\bar{x}]\\
K&x\in[\bar{x},\infty).
\end{cases} 
	\end{align*}
We will first show that this is indeed so. It is directly seen that this holds for $x=0$ and $x\in [\bar{x},\infty)$. Now note (cf. calculations in Section \ref{sec:candidate}) that $h_i\in \mathcal{C}^2((0,\ubar{x})\cup(\ubar{x},\bar{x}))\cap \mathcal{C}(0,\infty)$ is the solution to the boundary value problem 
\begin{align*}
x + (\mathbf{A}-r_i)h_i(x)&=\lambda(x)(h_i(x)-K), \enskip x \in (0,\ubar{x})\cup(\ubar{x},\bar{x}) \\
h_i(0)&=0\\
h_i(x)&=K,\enskip x \in [\bar{x},\infty)\\
h_i'(\ubar{x}-) & = D_i{\alpha(r_i)}x^{\alpha(r_i)-1}+\frac{1}{r_i}\\
h_i'(\ubar{x}+) & = a_i
\end{align*}
where $\lambda(x):=0$ on $(0,\ubar{x})$. Calculations also give that
\begin{align*}
h_i'(\ubar{x}+) - h_i'(\ubar{x}-) = 2\lambda\left(h_i(\ubar{x})-K\right).
\end{align*}
Using first the local time-space formula and second the findings above we obtain for $x\in (0,\bar{x})$ that
\begin{align} \label{jajajajui1}
& \E_x\left(e^{-r_i\tau^{\Lambda,\ubar{x},\bar{x}}}h_i\left(X_{\tau^{\Lambda,\ubar{x},\bar{x}}}\right)\right)-h_i(x) \nonumber \\
& \enskip =\E_x\left(\int_0^{\tau^{\Lambda,\ubar{x},\bar{x}}}e^{-r_it}(\mathbf{A}-r)h_i(X_t)I_{\{X_t\neq \ubar{x}\}}dt\right) +\frac{1}{2}\E_x\left(\int_0^{\tau^{\Lambda,\ubar{x},\bar{x}}}e^{-r_it}(h_i'(\ubar{x}+)-h_i'(\ubar{x}-))dl_{t}^{\ubar{x}}\right) \nonumber \\
& \enskip =-\E_x\left(\int_0^{\tau^{\Lambda,\ubar{x},\bar{x}}}e^{-r_it}X_tdt\right)+ 
\E_x\left(\int_0^{\tau^{\Lambda,\ubar{x},\bar{x}}}e^{-r_it}(h_i(X_t)-K)(\lambda(X_t)dt+\lambda dl_{t}^{\ubar{x}})\right).
\end{align}
In the rest of the proof we let $D:=(0,\bar{x})$ and we will use the observation  
\begin{align}\label{tau-lambda-before-tau-D}
I_{\{\tau^{\Lambda}\leq \tau^D\}}= 1 \enskip \mbox{a.s.,}
\end{align}
which can be seen using \cite[Theorem 2.6, Equation (2.12)]{mijatovic2012convergence} 
%
%
which implies that 
$\int_0^{\tau^D+h}\lambda(X_s)ds=\infty$, for any $h>0$, which implies that 
$\inf\{t\geq0: \int_0^t\lambda(X_s)ds \geq U\} \leq \tau^D$, which implies \eqref{tau-lambda-before-tau-D}.
Now, it follows from \eqref{tau-lambda-before-tau-D}  that $\tau^{\Lambda,\ubar{x},\bar{x}}:=\tau^{\Lambda} \wedge \tau^D=\tau^{\Lambda}$ a.s. and using also Proposition \ref{centralProposition} (as well as, e.g., Definition \ref{def:mixed_threshold_stopping_time}) we thus find
\begin{align}  \label{jajajajui2}
\E_x\left(\int_0^{\tau^{\Lambda,\ubar{x},\bar{x}}}e^{-r_it}(h_i(X_t)-K)(\lambda(X_t)dt+\lambda dl_{t}^{\ubar{x}})\right)
= 
\E_x\left(e^{-r_i\tau^{\Lambda,\ubar{x},\bar{x}}}(h_i(X_{\tau^{\Lambda,\ubar{x},\bar{x}}})-K)\right). 
\end{align}
By rearranging \eqref{jajajajui1} and \eqref {jajajajui2} and recalling \eqref{w-def-realoption} we conclude that $w_{\tau^{\Lambda,\ubar{x},\bar{x}}}(x,r_i)=h_i(x)$. Using this result together with \eqref{disc-F-realOp}, observations from Section \ref{sec:candidate} and calculations, gives us 
\begin{align*}
J_{\tau^{\Lambda,\ubar{x},\bar{x}}}(x)& =ph_1(x)+(1-p)h_2(x)\\
& =
\begin{cases}
p \left(D_1x^{\alpha(r_1)}+\frac{x}{r_1}\right) + (1-p)\left(D_2x^{\alpha(r_2)}+\frac{x}{r_2} \right)           & x\in[0,\ubar{x}]\\
p(a_1x+b_1) +(1-p)(a_2x+b_2)                                   &x\in [\ubar{x},\bar{x}]\\
K                                          &x\in[\bar{x},\infty)
\end{cases}\\
& =
\begin{cases}
p \left(
\left(a_1\ubar{x}+b_1-\frac{\ubar{x}}{r_1}\right) \left(\frac{x}{\ubar{x}}\right)^{\alpha(r_1)}
+\frac{x}{r_1}\right)
+ (1-p)\left(
\left(a_2\ubar{x}+b_2-\frac{\ubar{x}}{r_2}\right) \left(\frac{x}{\ubar{x}}\right)^{\alpha(r_2)}
+\frac{x}{r_2} \right)           & x\in[0,\ubar{x}]\\
K                                   &x\in [\ubar{x},\bar{x}]\\
K                                          &x\in[\bar{x},\infty).
\end{cases}
\end{align*}
It is now directly seen that condition \eqref{mainthmcond2} (in Theorem \ref{main_thm}) is satisfied whereas \eqref{mainthmcond3} and \eqref{mainthmcond4} can be verified. Moreover, it can be found that $J_{\tau^{\Lambda,\ubar{x},\bar{x}}}(x)$ is a nonlinear concave function on $(0,\ubar{x})$ and from this together with $J'_{\tau^{\Lambda,\ubar{x},\bar{x}}}(\ubar{x})=0$ it follows that \eqref{mainthmcond1} holds. It follows from Theorem \ref{main_thm} that we have indeed found an equilibrium. 
\end{proof}


A main finding of \cite{tan2021failure} is that in case \eqref{mixed_eq_cond} does not hold then a pure threshold stopping time equilibrium exists. Theorem \ref{pure_eq_theorem} below is an analogous result for the setting of the present paper. Note, however, that in our case also mixed strategies are allowed as deviation strategies which gives a stronger result.


\begin{theorem} \label{pure_eq_theorem} 
If \eqref{mixed_eq_cond} does not hold, then $\tau^{\Lambda,\ubar{x},\bar{x}}:=\inf\{t \geq 0: X_t\geq \ubar{x}\}$ with 
\begin{align} \label{pureNE-thres}
\ubar{x}:= \frac{\int_0^\infty\alpha(r)dF(r)}{\int_0^\infty\frac{\alpha(r)-1}{r}dF(r)}K,
\end{align}
is  a pure threshold equilibrium. 
\end{theorem}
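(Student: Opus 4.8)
The plan is to verify that the stated time meets all hypotheses of the verification theorem, Theorem \ref{main_thm}, in the degenerate case $\ubar{x}=\bar{x}$, so that $\tau^{\Lambda,\ubar{x},\bar{x}}=\inf\{t\ge 0:X_t\ge\ubar{x}\}$ is a pure threshold stopping time in the sense of Definition \ref{def:mixed_threshold_stopping_time}, the randomization interval $[\ubar{x},\bar{x})$ is empty, and condition \eqref{mainthmcond2} is vacuous. First I would identify $w_{\tau^{\Lambda,\ubar{x},\bar{x}}}(\cdot,r_i)$ explicitly: for the driftless GBM with $f(x)=x$ and $g\equiv K$, standard theory (as in Section \ref{sec:candidate}: the indicial equation $\frac{1}{2}\sigma^2 m(m-1)=r_i$ has roots $\alpha(r_i)>1$ and a negative root, and $x/r_i$ is a particular solution of \eqref{prop_help_diff}) gives that $x\mapsto w_{\tau^{\Lambda,\ubar{x},\bar{x}}}(x,r_i)$ equals $\frac{x}{r_i}+D_i x^{\alpha(r_i)}$ on $(0,\ubar{x}]$, with $D_i=(K-\ubar{x}/r_i)\ubar{x}^{-\alpha(r_i)}$, and equals $K$ on $[\ubar{x},\infty)$. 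This is made rigorous by applying It\^o to $e^{-r_i t}w_{\tau^{\Lambda,\ubar{x},\bar{x}}}(X_t,r_i)+\int_0^t e^{-r_i s}X_s\,ds$, optional stopping at $\tau^{\Lambda,\ubar{x},\bar{x}}\wedge t$, letting $t\to\infty$ and invoking the integrability bounds of the Assumption (on $\{\tau^{\Lambda,\ubar{x},\bar{x}}=\infty\}$ one has $e^{-r_i t}w_{\tau^{\Lambda,\ubar{x},\bar{x}}}(X_t,r_i)\to 0$). Continuity of these functions gives $\tau^{\Lambda,\ubar{x},\bar{x}}\in\mathcal{N}$, and the weighted sum yields $J_{\tau^{\Lambda,\ubar{x},\bar{x}}}\in\mathcal{C}^2((0,\ubar{x}))\cap\mathcal{C}(0,\infty)$, the differentiability hypothesis of Theorem \ref{main_thm}.

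Next I would check conditions \eqref{mainthmcond3} and \eqref{mainthmcond4}. Differentiating the explicit formula gives $J'_{\tau^{\Lambda,\ubar{x},\bar{x}}}(\ubar{x}-)=-\int_0^\infty\frac{\alpha(r)-1}{r}dF(r)+\frac{K}{\ubar{x}}\int_0^\infty\alpha(r)dF(r)$, which vanishes precisely for the value of $\ubar{x}$ in \eqref{pureNE-thres}; since also $J'_{\tau^{\Lambda,\ubar{x},\bar{x}}}(\ubar{x}+)=0=g'(\ubar{x})$, this is the smooth fit condition \eqref{mainthmcond4} (and $J_{\tau^{\Lambda,\ubar{x},\bar{x}}}$ is in fact $\mathcal{C}^1$ at $\ubar{x}$). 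Since $\mathbf{A}g\equiv 0$, condition \eqref{mainthmcond3} reduces to $x\ge K\int_0^\infty r\,dF(r)$ for $x\ge\ubar{x}$, i.e.\ to $\ubar{x}\ge K\int_0^\infty r\,dF(r)$; substituting \eqref{pureNE-thres} and using $\int_0^\infty\frac{\alpha(r)-1}{r}dF(r)>0$ (because $\alpha(r)>1$ for $r>0$), this is equivalent to $\int_0^\infty\alpha(r)dF(r)\ge\int_0^\infty r\,dF(r)\int_0^\infty\frac{\alpha(r)-1}{r}dF(r)$, i.e.\ exactly the negation of \eqref{mixed_eq_cond}, which we are assuming. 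In particular $\ubar{x}\ge K\int_0^\infty r\,dF(r)>Kr_1$, so $D_1<0$, and $\ubar{x}\in(0,\infty)$ is a genuine threshold.

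The remaining condition \eqref{mainthmcond1}, namely $J_{\tau^{\Lambda,\ubar{x},\bar{x}}}(x)<K=g(x)$ on $(0,\ubar{x})$, is the crux, and I would handle it as in the proof of Theorem \ref{example_eq_theorem} by showing $J_{\tau^{\Lambda,\ubar{x},\bar{x}}}$ is strictly concave there. Summing \eqref{prop_help_diff} over $i$ with weights $p,1-p$ gives $\mathbf{A}J_{\tau^{\Lambda,\ubar{x},\bar{x}}}(x)=\int_0^\infty r\,w_{\tau^{\Lambda,\ubar{x},\bar{x}}}(x,r)dF(r)-x$, so with the explicit formula
\begin{align*}
\tfrac{1}{2}\sigma^2 x^2 J''_{\tau^{\Lambda,\ubar{x},\bar{x}}}(x) &= pD_1 r_1 x^{\alpha(r_1)}+(1-p)D_2 r_2 x^{\alpha(r_2)}\\
&= x^{\alpha(r_1)}\big(pD_1 r_1+(1-p)D_2 r_2 x^{\alpha(r_2)-\alpha(r_1)}\big).
\end{align*}
If $D_2\le 0$ the bracket is at most $pD_1 r_1<0$; if $D_2>0$ it is strictly increasing in $x$ (as $\alpha(r_2)>\alpha(r_1)$), hence on $(0,\ubar{x})$ strictly below its value at $\ubar{x}$, which equals $\ubar{x}^{-\alpha(r_1)}\cdot\tfrac{1}{2}\sigma^2\ubar{x}^2 J''_{\tau^{\Lambda,\ubar{x},\bar{x}}}(\ubar{x}-)=\ubar{x}^{-\alpha(r_1)}\big(K\int_0^\infty r\,dF(r)-\ubar{x}\big)\le 0$. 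Either way $J''_{\tau^{\Lambda,\ubar{x},\bar{x}}}<0$ on $(0,\ubar{x})$, so $J_{\tau^{\Lambda,\ubar{x},\bar{x}}}$ is strictly concave there; combined with $J_{\tau^{\Lambda,\ubar{x},\bar{x}}}(\ubar{x})=K$ and $J'_{\tau^{\Lambda,\ubar{x},\bar{x}}}(\ubar{x}-)=0$ this forces $J'_{\tau^{\Lambda,\ubar{x},\bar{x}}}>0$, hence $J_{\tau^{\Lambda,\ubar{x},\bar{x}}}$ strictly increasing, on $(0,\ubar{x})$, which gives \eqref{mainthmcond1}. All hypotheses of Theorem \ref{main_thm} then hold, so $\tau^{\Lambda,\ubar{x},\bar{x}}$ is an equilibrium. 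The only genuine obstacle is this last step — in particular accommodating the sign of $D_2$ in the concavity argument — everything else being a direct transcription of the computations of Sections \ref{sec:candidate}–\ref{sec:verification} with the randomization interval collapsed to a point.
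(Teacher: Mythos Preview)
Your proposal is correct and follows essentially the same route as the paper's own proof: identify $w_{\tau^{\Lambda,\ubar{x},\bar{x}}}(\cdot,r_i)$ explicitly via the ODE \eqref{prop_help_diff}, then verify \eqref{mainthmcond3}, \eqref{mainthmcond4} and \eqref{mainthmcond1} in turn (with \eqref{mainthmcond2} void) and invoke Theorem \ref{main_thm}. The only difference is that the paper leaves the concavity of $J_{\tau^{\Lambda,\ubar{x},\bar{x}}}$ on $(0,\ubar{x})$ as a fact ``to be verified'', whereas you supply the explicit case split on the sign of $D_2$ and the monotonicity-in-$x$ argument for the bracket; this extra detail is sound and in fact completes what the paper only asserts.
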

\begin{proof}
Using similar arguments as in Theorem \ref{example_eq_theorem}, we obtain $w_{\tau^{\Lambda,\ubar{x},\bar{x}}}(x,r_i)=h_i(x)$, where
\begin{align*}
h_i(x):=\begin{cases}\left(K-\frac{\ubar{x}}{r_i}\right)\left(\frac{x}{\ubar{x}}\right)^{\alpha(r_i)}+\frac{x}{r_i}& x\in[0,\ubar{x}]\\
K&x\in[\bar{x},\infty)
\end{cases}
\end{align*}
which is the $\mathcal{C}^2(0,\ubar{x})\cap \mathcal{C}(0,\infty)$ solution to the boundary value problem
\begin{align*}
x +(\mathbf{A}-r_i)h_i(x)&=0, \enskip x \in [0,\ubar{x}]\\
h_i(0)&=0\\
h_i(x)&=K,\enskip \enskip x \in [\ubar{x},\infty).
\end{align*}
It can now be verified using \eqref{pureNE-thres} that
 \begin{align*}
J'_{\tau^{\Lambda,\ubar{x},\bar{x}}}(\ubar{x}-)
& = p h_1'(\ubar{x}-) + (1-p)h_2'(\ubar{x}-)\\
& = 0,
\end{align*}
which implies that the smooth fit condition  \eqref{mainthmcond4} holds. It can be verified that \eqref{mainthmcond3} holds if and only if \eqref{mixed_eq_cond} does not hold; whereas  \eqref{mainthmcond2} is, since we have a pure threshold strategy, void. Condition \eqref{mainthmcond1} is verified using $J'_{\tau^{\Lambda,\ubar{x},\bar{x}}}(\ubar{x})=0$ and by verifying that $J_{\tau^{\Lambda,\ubar{x},\bar{x}}}(x)=ph_1(x)+(1-p)h_2(x)$ is a concave (and nonlinear) function on $(0,\ubar{x})$. The result follows from Theorem \ref{main_thm}. 
\end{proof}

Illustrations are found in Figure \ref{fig1}; note that ${\tau^{\Lambda,\ubar{x},\bar{x}}}$ has for ease of  exposition been notationally suppressed in the cost functions in the figure legend.

\begin{figure}[h]\label{fig1}
  \subfigure[]{\includegraphics[width=0.45\textwidth]{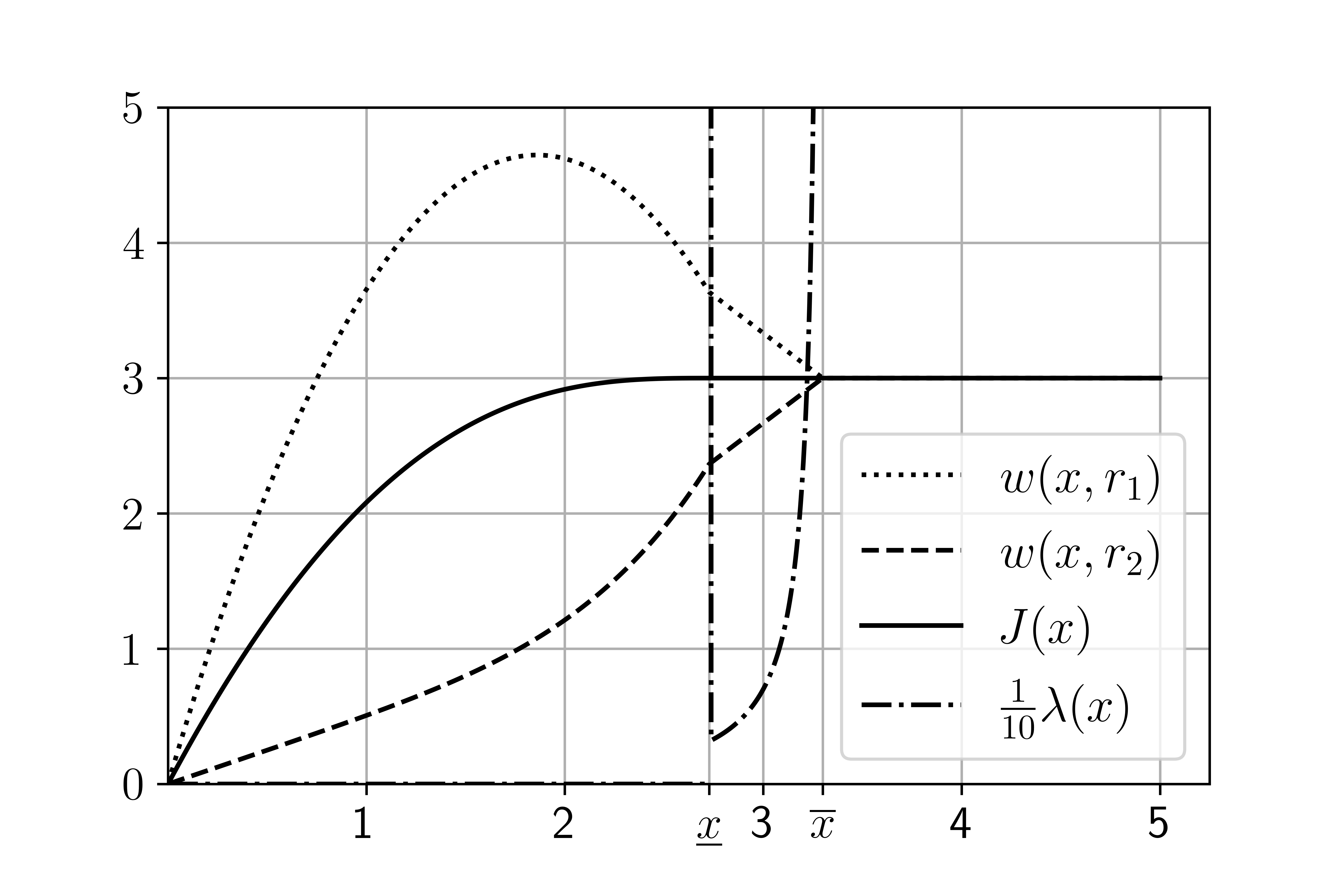}\label{fig:mixed1}}
  \subfigure[]{\includegraphics[width=0.45\textwidth]{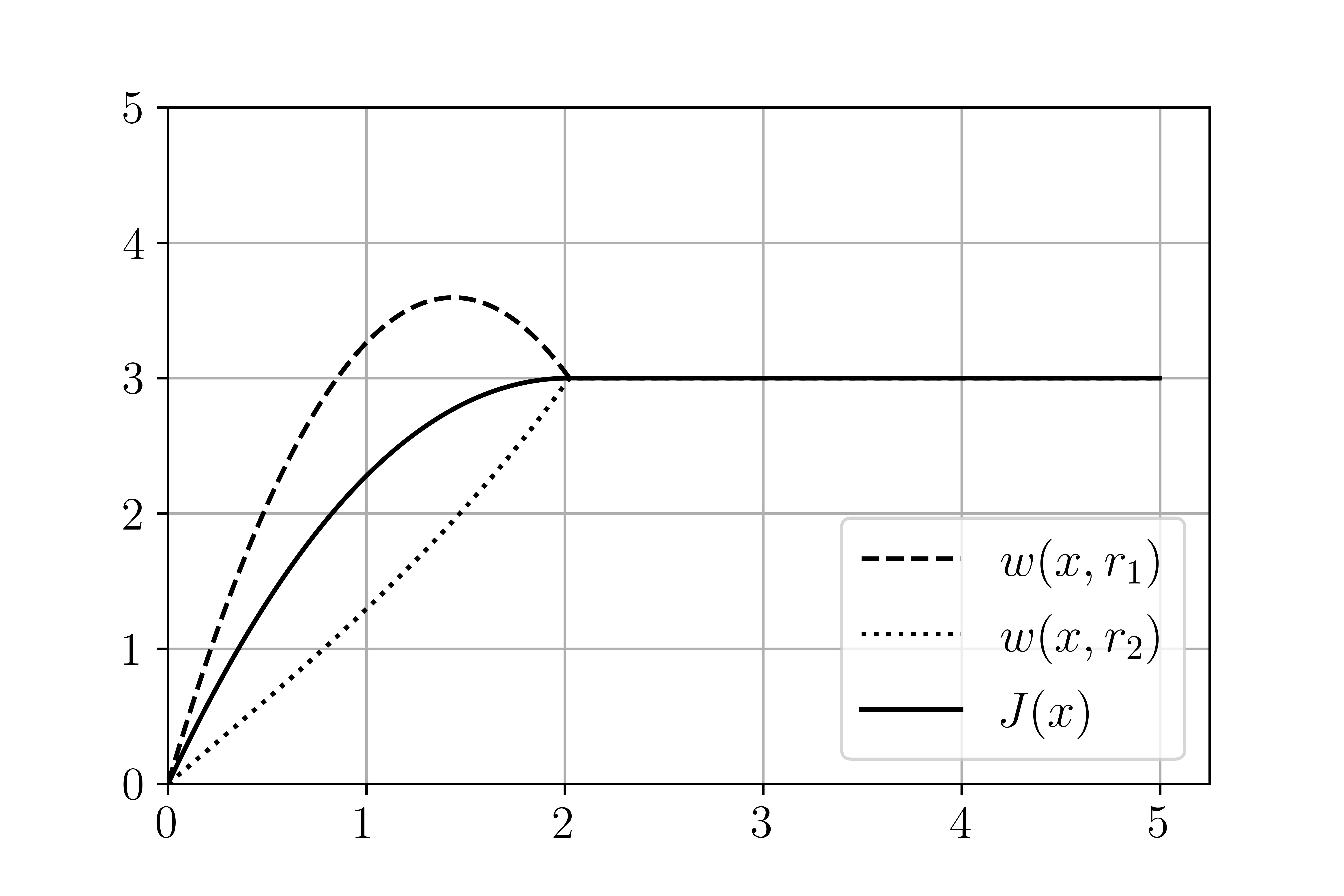}\label{fig:pure1}}
\caption{
(a): The mixed threshold equilibrium cost functions in the case $\sigma^2=0.2$, $K=3$, $p=0.5$, $r_1=0.2$, $r_2=2$. The singular local time push at $\protect\underaccent{\bar}{x}$ is motivated by the peak in the stopping intensity $\lambda(x)$. 
(b): The pure equilibrium cost functions in the case $\sigma^2=0.2$, $K=3$, $p=0.5$, $r_1=0.2$, $r_2=0.8$.}
\end{figure}

\begin{corollary}\label{smooth_fit_corrolary} 
A threshold equilibrium ${\tau^{\Lambda,\ubar{x},\bar{x}}}$ of either pure or mixed type always exists. In both cases smooth fit is satisfied in the sense that
\begin{align*}
J'_{\tau^{\Lambda,\ubar{x},\bar{x}}}(\ubar{x}) = 0.  
\end{align*}
 \end{corollary}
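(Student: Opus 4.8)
The plan is to prove the corollary by a dichotomy on whether the model-parameter condition \eqref{mixed_eq_cond} is satisfied, invoking Theorem \ref{example_eq_theorem} in one case and Theorem \ref{pure_eq_theorem} in the other, and then to observe that in both constructions the relevant threshold was pinned down precisely by imposing the smooth fit condition \eqref{mainthmcond4}. Throughout one uses that in the real options problem $g\equiv K$, so \eqref{mainthmcond4} reads $J'_{\tau^{\Lambda,\ubar{x},\bar{x}}}(\ubar{x})=g'(\ubar{x})=0$, which is exactly the asserted smooth fit.

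First I would treat the case where \eqref{mixed_eq_cond} holds. Here Proposition \ref{candidate-is-really-a-strat} guarantees that the candidate $\tau^{\Lambda,\ubar{x},\bar{x}}$ built from \eqref{the-intensity-cand}, \eqref{the-intensity-x-upper}, \eqref{u_x_eq} and \eqref{the-candidate-lambda} is a genuine mixed threshold stopping time, and Theorem \ref{example_eq_theorem} shows it is an equilibrium. Smooth fit is then immediate: the candidate value of $\ubar{x}$ in \eqref{u_x_eq} was obtained by solving $J'_{\tau^{\Lambda,\ubar{x},\bar{x}}}(\ubar{x})=0$ (equivalently $py_1'(\ubar{x})+(1-p)y_2'(\ubar{x})=0$), and in the proof of Theorem \ref{example_eq_theorem} condition \eqref{mainthmcond4} is verified for the resulting cost function; since $g'\equiv 0$ here this is precisely $J'_{\tau^{\Lambda,\ubar{x},\bar{x}}}(\ubar{x})=0$.

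Next I would treat the complementary case where \eqref{mixed_eq_cond} fails. Theorem \ref{pure_eq_theorem} provides the pure threshold equilibrium $\tau^{\Lambda,\ubar{x},\bar{x}}=\inf\{t\ge 0:X_t\ge \ubar{x}\}$ with $\ubar{x}$ given by \eqref{pureNE-thres}, and its proof explicitly checks $J'_{\tau^{\Lambda,\ubar{x},\bar{x}}}(\ubar{x}-)=p h_1'(\ubar{x}-)+(1-p)h_2'(\ubar{x}-)=0$. Since $J_{\tau^{\Lambda,\ubar{x},\bar{x}}}\equiv K$ on $[\ubar{x},\infty)$, the right derivative at $\ubar{x}$ also vanishes, so the one-sided derivatives agree and $J'_{\tau^{\Lambda,\ubar{x},\bar{x}}}(\ubar{x})=0$ unambiguously. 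Combining the two cases, a threshold equilibrium (mixed if \eqref{mixed_eq_cond} holds, pure otherwise) always exists, and in each case $J'_{\tau^{\Lambda,\ubar{x},\bar{x}}}(\ubar{x})=0$.

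I expect essentially no obstacle here, as the statement is a bookkeeping summary of Theorems \ref{example_eq_theorem} and \ref{pure_eq_theorem}; the only point warranting a word of care is the interpretation of $J'_{\tau^{\Lambda,\ubar{x},\bar{x}}}(\ubar{x})$ in the pure case, where a priori only one-sided derivatives need exist, which is resolved by the constancy of $J_{\tau^{\Lambda,\ubar{x},\bar{x}}}$ to the right of $\ubar{x}$ noted above.
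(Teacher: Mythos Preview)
Your proposal is correct and matches the paper's own proof, which simply refers back to Theorems \ref{example_eq_theorem} and \ref{pure_eq_theorem} and their proofs. You have spelled out in slightly more detail exactly what the paper leaves implicit (the dichotomy on \eqref{mixed_eq_cond}, the fact that $g'\equiv 0$ turns \eqref{mainthmcond4} into $J'_{\tau^{\Lambda,\ubar{x},\bar{x}}}(\ubar{x})=0$, and the one-sided derivative issue in the pure case), but the approach is identical.
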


\begin{proof} See the two previous result and their proofs.
\end{proof}

\begin{remark} It can be shown that condition \eqref{mixed_eq_cond} holds if $r_2-r_1$ is sufficiently large (see p. 11 in \cite{tan2021failure} for an argument). 
%
%
\end{remark}





%

\appendix
\section{Appendix}

\subsection{Results related to local time and the exit time $\tau_h$}

\begin{lemma} \label{lemma-tau-h-oct}  For any $x \in (\alpha,\beta)$,
\begin{align*}
\lim_{h\searrow 0} \frac{h^2}{\E_x(\tau_h)} = \sigma^2(x),
\enskip \lim_{h\searrow 0} \frac{\E_x\left(\tau_h^2\right)}{\E_x(\tau_h)} = 0,
\enskip \mbox{ and } \enskip 
\enskip \lim_{h\searrow 0} \frac{\left(  \E_x\left(l^x_{\tau_h}\right)  \right)^2}{\E_x(\tau_h)} = \sigma^2(x).
\end{align*}
\end{lemma}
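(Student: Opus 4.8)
The plan is to treat the three limits separately, all as local statements around $x$: fix $h_0>0$ with $[x-h_0,x+h_0]\subset(\alpha,\beta)$, write $0<c_0\le\sigma\le C_0$ and $|\mu|\le C_1$ on this compact interval, shrink $h_0$ so that $2h_0C_1<c_0^2/2$, and carry everything out for $h\le h_0/3$. For the first limit I would apply It\^{o}'s formula to $\phi(y)=(y-x)^2$ along $X$ stopped at $\tau_h$; since $|\phi'|\le 2h$ on $[0,\tau_h]$ the local-martingale part is a genuine martingale, so $\E_x\bigl(\phi(X_{t\wedge\tau_h})\bigr)=\E_x\bigl(\int_0^{t\wedge\tau_h}(2\mu(X_s)(X_s-x)+\sigma^2(X_s))\,ds\bigr)$. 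On $[0,\tau_h]$ the integrand is $\ge c_0^2-2hC_1\ge c_0^2/2$ while $\phi(X_{t\wedge\tau_h})\le h^2$, so letting $t\to\infty$ gives first $\E_x(\tau_h)\le 2h^2/c_0^2<\infty$ (monotone convergence) and then, by dominated convergence and the fact that path-continuity forces $\phi(X_{\tau_h})=h^2$,
\begin{align*}
h^2=\E_x\!\left(\int_0^{\tau_h}\bigl(2\mu(X_s)(X_s-x)+\sigma^2(X_s)\bigr)\,ds\right).
\end{align*}
Bounding the drift term by $2hC_1\,\E_x(\tau_h)$ and using $|\sigma^2(X_s)-\sigma^2(x)|\le\sup_{|y-x|\le h}|\sigma^2(y)-\sigma^2(x)|\to0$ then gives $h^2=(\sigma^2(x)+o(1))\,\E_x(\tau_h)$ as $h\searrow0$, which is the first claim; note in passing that $\E_x(\tau_h)=O(h^2)$ and $\E_x(\tau_h)\to0$.

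For the second limit I would write $\tau_h^2=2\int_0^{\tau_h}(\tau_h-s)\,ds$, take expectations, and use the Markov property of $X$ at the deterministic time $s$: on $\{s<\tau_h\}$ the conditional expectation of $\tau_h-s$ given $\mathcal{F}_s$ is the expected time for $X$ started at $X_s\in(x-h,x+h)$ to exit $(x-h,x+h)$, which by the triangle inequality is dominated by the expected exit time of $X$ from the ball of radius $2h$ about its own starting point; by the first-step estimate (applied with $2h$ in place of $h$, the bound being uniform over starting points in $[x-h_0,x+h_0]$) this is at most $Kh^2$ for a constant $K$ independent of $h$ and of the starting point. Hence $\E_x(\tau_h^2)\le 2Kh^2\,\E_x(\tau_h)$, so the ratio $\E_x(\tau_h^2)/\E_x(\tau_h)\le 2Kh^2\to0$.

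For the third limit I would apply the Tanaka--It\^{o} formula to $|X_t-x|$ and use $X_0=x$ to get
\begin{align*}
h=\int_0^{\tau_h}\mathrm{sgn}(X_s-x)\mu(X_s)\,ds+\int_0^{\tau_h}\mathrm{sgn}(X_s-x)\sigma(X_s)\,dW_s+l^x_{\tau_h}.
\end{align*}
Since $\E_x\bigl(\int_0^{\tau_h}\sigma^2(X_s)\,ds\bigr)\le C_0^2\,\E_x(\tau_h)<\infty$, the stochastic integral stopped at $\tau_h$ is a uniformly integrable martingale; taking expectations and bounding the drift term by $C_1\,\E_x(\tau_h)=O(h^2)$ yields $\E_x(l^x_{\tau_h})=h+O(h^2)$. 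Squaring, dividing by $\E_x(\tau_h)$, and invoking the first limit gives $(\E_x(l^x_{\tau_h}))^2/\E_x(\tau_h)=(h^2/\E_x(\tau_h))(1+O(h))\to\sigma^2(x)$.

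I expect the main obstacle to be not any conceptual difficulty but the integrability and uniformity bookkeeping: one needs $\E_x(\tau_h)<\infty$ (for the use of It\^{o}/Dynkin and for the martingale property in the third part) and, in the second part, an exit-time bound uniform over starting points in a fixed neighbourhood of $x$. Both are standard consequences of $\inf\sigma>0$ locally, and in fact the required $O(h^2)$ bound on the mean exit time is exactly what the first-step computation delivers when it is run with inequalities rather than with the leading-order expansion.
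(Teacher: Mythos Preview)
Your argument is correct. The paper itself does not give a proof of this lemma at all: it simply cites \cite[Lemma~A.5 and Proposition~3.3]{christensen2020time}. So there is nothing in the paper to compare against line by line; what you have supplied is a self-contained derivation that the paper outsources.

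Your route---Dynkin/It\^{o} applied to $(y-x)^2$ for the first limit, the identity $\tau_h^2=2\int_0^{\tau_h}(\tau_h-s)\,ds$ together with the Markov property and a uniform $O(h^2)$ exit-time bound for the second, and Tanaka's formula for the third---is the standard elementary approach and each step is justified. In particular, your handling of integrability (first bounding $\E_x(\tau_h)\le 2h^2/c_0^2$ via the lower bound on the generator of $(y-x)^2$, then using this to make the stochastic integrals true martingales and to run dominated convergence) is exactly the bookkeeping required, and your domination of the residual exit time from $(x-h,x+h)$ by the exit time from a centered $2h$-ball is the clean way to get the needed uniformity in the second limit. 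The only thing the citation buys the paper is brevity; your proof buys self-containment.
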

\begin{proof} 
See \cite[Lemma A.5 and Proposition 3.3]{christensen2020time}. \end{proof}

%

\begin{lemma}\label{prop_limit_local_time_time_tau-h}  For any $x \in (\alpha,\beta)$, 
\begin{align*}
\lim_{h\searrow 0} \frac{\E_x\left(\tau_hl_{\tau_h}^x\right)}{ \E_x(\tau_h)}=0 \enskip \mbox{ and } \enskip 
\lim_{h\searrow 0} \frac{\E_x\left((l_{\tau_h}^x)^2\right)}{ \E_x(\tau_h)}=2\sigma^2(x).
\end{align*}
For any $x \in (\alpha,\beta)$ and any process $\Psi$ (cf. Definition \ref{def:mix_strat}), 
\begin{align*}
\lim_{h\searrow 0}\frac{\E_x\left(l_{\tau^\Psi\wedge\tau_h}^x\right)}{\sqrt{\E_x(\tau_h)}}=\sigma(x)
\enskip \mbox{ and } \enskip 
\lim_{h\searrow 0}\frac{\E_x\left(\tau^{\Psi}  \wedge \tau_h\right)}{\E_x(\tau_h)}=1.
\end{align*}
\end{lemma}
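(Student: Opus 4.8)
The plan is to prove the four limits in two stages: first the two that involve only $X$ and $\tau_h$, and then bootstrap to the two that involve $\tau^\Psi\wedge\tau_h$. Throughout I work with $h$ small enough that $[x-h,x+h]\subset(\alpha,\beta)$, and I recall from Lemma~\ref{lemma-tau-h-oct} that $\E_x(\tau_h)\to0$, $\E_x(\tau_h^2)/\E_x(\tau_h)\to0$ and $(\E_x(l_{\tau_h}^x))^2/\E_x(\tau_h)\to\sigma^2(x)$ (so also $\E_x(l_{\tau_h}^x)/\sqrt{\E_x(\tau_h)}\to\sigma(x)$, since $\sigma>0$).

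For $\E_x((l_{\tau_h}^x)^2)/\E_x(\tau_h)\to2\sigma^2(x)$ I would first establish the exact identity $\E_x((l_{\tau_h}^x)^2)=2\,(\E_x(l_{\tau_h}^x))^2$ by showing that $l_{\tau_h}^x$ is exponentially distributed under $\mathbb{P}_x$. Let $\mathfrak{s}$ be a scale function of $X$ with $\mathfrak{s}(x)=0$, and for $\lambda>0$ let $g$ be the continuous function on $[x-h,x+h]$ that is affine in $\mathfrak{s}$ on each of $(x-h,x)$ and $(x,x+h)$, with $g(x\pm h)=1$ and the matching condition $g'(x+)-g'(x-)=2\lambda g(x)$; solving this linear system gives $g(x)=c_h/(c_h+2\lambda)$ for some constant $c_h=c_h(x)>0$. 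Since $\mathbf{A}g=0$ off $x$ and the matching condition holds, the local time--space (Meyer--It\^{o}) formula shows that the $dt$- and $dl_t^x$-terms in the semimartingale expansion of $e^{-\lambda l_t^x}g(X_t)$ cancel, so this process is a local martingale on $[0,\tau_h]$, and it is bounded there; optional stopping together with $g(X_{\tau_h})=1$ yields $\E_x(e^{-\lambda l_{\tau_h}^x})=c_h/(c_h+2\lambda)$, i.e.\ $l_{\tau_h}^x\sim\mathrm{Exp}(c_h/2)$, whence the moment identity. Combining it with $(\E_x(l_{\tau_h}^x))^2/\E_x(\tau_h)\to\sigma^2(x)$ gives the claim. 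The limit $\E_x(\tau_h l_{\tau_h}^x)/\E_x(\tau_h)\to0$ then follows from Cauchy--Schwarz, $\E_x(\tau_h l_{\tau_h}^x)\le\bigl(\E_x(\tau_h^2)\bigr)^{1/2}\bigl(\E_x((l_{\tau_h}^x)^2)\bigr)^{1/2}$, dividing by $\E_x(\tau_h)$, and using $\E_x(\tau_h^2)/\E_x(\tau_h)\to0$ together with the limit just proved.

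For the two limits involving $\tau^\Psi\wedge\tau_h$, fix $x$ and choose $\bar h>0$ small enough that $[x-\bar h,x+\bar h]$ contains none of the points $x_i\neq x$ and that $\psi$ is bounded on it by some $C_\psi$; then for $0<h\le\bar h$ one has the pathwise bound $\Psi_{\tau_h}\le\psi_i\,l_{\tau_h}^x\,I_{\{x\in\mathcal X\}}+C_\psi\,\tau_h$. From the proof of Proposition~\ref{dist-for-Lambda:prop}, $\mathbb{P}_x(\tau^\Psi>t\mid\mathcal F^X_\infty)=e^{-\Psi_t}$, so conditioning on $\mathcal F^X_\infty$ (on which $\tau_h$ and $l_{\tau_h}^x$ are measurable) and using $1-e^{-y}\le y$ gives $\mathbb{P}_x(\tau^\Psi\le\tau_h\mid\mathcal F^X_\infty)\le\Psi_{\tau_h}$. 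Since $l^x$ is nondecreasing, $0\le l_{\tau_h}^x-l_{\tau^\Psi\wedge\tau_h}^x\le l_{\tau_h}^x I_{\{\tau^\Psi\le\tau_h\}}$ and $0\le\tau_h-\tau^\Psi\wedge\tau_h\le\tau_h I_{\{\tau^\Psi\le\tau_h\}}$, so after conditioning on $\mathcal F^X_\infty$ and taking expectations,
\[
0\le\E_x(l_{\tau_h}^x)-\E_x(l_{\tau^\Psi\wedge\tau_h}^x)\le\psi_i\,\E_x\bigl((l_{\tau_h}^x)^2\bigr)I_{\{x\in\mathcal X\}}+C_\psi\,\E_x(\tau_h l_{\tau_h}^x),\qquad 0\le\E_x(\tau_h)-\E_x(\tau^\Psi\wedge\tau_h)\le\psi_i\,\E_x(\tau_h l_{\tau_h}^x)I_{\{x\in\mathcal X\}}+C_\psi\,\E_x(\tau_h^2).
\]
Dividing the first inequality by $\sqrt{\E_x(\tau_h)}$ and the second by $\E_x(\tau_h)$, and using $\E_x(\tau_h)\to0$ together with the two limits of the previous paragraph and Lemma~\ref{lemma-tau-h-oct}, all right-hand sides vanish in the limit; combined with $\E_x(l_{\tau_h}^x)/\sqrt{\E_x(\tau_h)}\to\sigma(x)$ and $\tau^\Psi\wedge\tau_h\le\tau_h$ this yields $\E_x(l_{\tau^\Psi\wedge\tau_h}^x)/\sqrt{\E_x(\tau_h)}\to\sigma(x)$ and $\E_x(\tau^\Psi\wedge\tau_h)/\E_x(\tau_h)\to1$.

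The step requiring the most care is the second-moment identity for $l_{\tau_h}^x$: one must apply the local time--space formula to the merely $\mathcal C^1$, kinked function $g$, verify carefully that the $dt$- and $dl_t^x$-terms in the expansion of $e^{-\lambda l_t^x}g(X_t)$ cancel --- this is exactly where $\mathbf{A}g=0$ off $x$ and the matching condition enter --- and argue that the resulting stochastic integral is a true (not merely local) martingale on $[0,\tau_h]$. An alternative, slightly less computational route to the same identity is a regeneration argument: apply It\^{o}'s formula to $(l_t^x)^2$ and use the strong Markov property at the inverse of $l^x$, exploiting that $l^x$ is an additive functional increasing only on $\{X=x\}$. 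A minor further subtlety is the boundary case $x\in\partial D$ (where $\psi$ may be unbounded near $x$), in which the bound on $\Psi_{\tau_h}$ must be extracted from admissibility of $\tau^{\Psi,\mathcal X,D}$ rather than from boundedness of $\psi$.
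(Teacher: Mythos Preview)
Your proof is correct and takes a genuinely different route from the paper at two points.

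For the second limit, the paper expands $(l_{\tau_h}^x)^2$ directly from the Tanaka representation $l_{\tau_h}^x = h - \int_0^{\tau_h}\mathrm{sgn}(X_s-x)\,dX_s$, squares, applies the It\^{o} isometry, and shows the cross terms vanish after dividing by $\E_x(\tau_h)$. Your approach instead identifies the full law of $l_{\tau_h}^x$ as exponential via the Laplace-transform martingale $e^{-\lambda l_t^x}g(X_t)$. This is more elegant and yields a sharper structural statement (the exact identity $\E_x[(l_{\tau_h}^x)^2]=2(\E_x[l_{\tau_h}^x])^2$ for every $h$), at the price of invoking the local time--space formula for a kinked test function, which you correctly flag as the step needing care. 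The first limit is then handled by Cauchy--Schwarz in both proofs.

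For the third and fourth limits, the paper bounds $\E_x(l_{\tau_h}^x I_{\{\tau^\Psi<\tau_h\}})/\E_x(l_{\tau_h}^x)$ via Cauchy--Schwarz by
\[
\left(\frac{\E_x[(l_{\tau_h}^x)^2]}{(\E_x[l_{\tau_h}^x])^2}\,\mathbb{P}_x(\tau^\Psi<\tau_h)\right)^{1/2},
\]
using only that $\mathbb{P}_x(\tau^\Psi<\tau_h)\to0$ (which follows from $\Psi_{\tau_h}\to0$ a.s.\ and dominated convergence); for the fourth limit the paper additionally invokes boundedness of $\E_x(\tau_h^2)/(\E_x(\tau_h))^2$. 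Your conditioning bound $\mathbb{P}_x(\tau^\Psi\le\tau_h\mid\mathcal{F}^X_\infty)\le\Psi_{\tau_h}$ is quantitatively sharper and avoids that extra moment fact, but it requires a pathwise bound on $\Psi_{\tau_h}$ and hence local boundedness of $\psi$ near $x$, which (as you note) can fail at $x\in\partial D$. The paper's Cauchy--Schwarz route is more robust to this boundary case since it needs only the qualitative statement $\mathbb{P}_x(\tau^\Psi<\tau_h)\to0$, not control of $\Psi_{\tau_h}$ itself.
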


\begin{proof}
Using the Cauchy-Schwartz inequality we obtain
\begin{align*}
\frac{\E_x\left(\tau_hl_{\tau_h}^x\right)}{ \E_x(\tau_h)}=\E_x\left(\frac{\tau_h}{\sqrt{ \E_x(\tau_h)}}\cdot\frac{l_{\tau_h}^x}{\sqrt{ \E_x(\tau_h)}}\right)
\leq \sqrt{\frac{\E_x\left(\tau_h^2\right)}{ \E_x(\tau_h)}\frac{\E_x\left((l_{\tau_h}^x)^2\right)}{ \E_x(\tau_h)}}.
\end{align*}
Hence the first statement follows from Lemma \ref{lemma-tau-h-oct} and the second statement, which we now prove. Using the definition of the local time we obtain 

\begin{align*}
\E_x\left((l_{\tau_h}^x)^2\right)=h^2-2h\E_x\left(\int_0^{\tau_h}sgn(X_s-x)dX_s\right)+\E_x\left(\left(\int_0^{\tau_h}sgn(X_s-x)dX_s\right)^2\right).
\end{align*}
For the first two terms we obtain using Lemma \ref{lemma-tau-h-oct} 
\begin{align*}
\frac{h^2-2h\E_x\left(\int_0^{\tau_h}sgn(X_s-x)\mu(X_s)ds\right)}{\E_x(\tau_h)}\rightarrow \sigma^2(x)
\end{align*}
as $h\searrow 0$.
For the last term we obtain using the It\^{o}-isometry
\begin{align*}
\E_x\left(\left(\int_0^{\tau_h}sgn(X_s-x)dX_s\right)^2\right)
&=\E_x\left(\left(\int_0^{\tau_h}sgn(X_s-x)\mu(X_s)ds\right)^2\right)\\
&\enskip\enskip
+2\E_x\left(\int_0^{\tau_h}sgn(X_s-x)\mu(X_s)ds\int_0^{\tau_h}sgn(X_s-x)\sigma(X_s)dW_s \right)\\&
\enskip\enskip+\E_x\left(\int_0^{\tau_h}\sigma^2(X_s)ds\right).
\end{align*}
We have that
\begin{align*}
\lim_{h\searrow 0}\left|\frac{\E_x\left(\left(\int_0^{\tau_h}sgn(X_s-x)\mu(X_s)ds\right)^2\right)}{\E_x(\tau_h)}\right|\leq \lim_{h\searrow 0}\sup_{y\in [x-h,x+h]}\mu^2(y)\frac{\E_x\left(\tau_h^2\right)}{\E_x(\tau_h)}=0
\end{align*}
and using Cauchy-Schwartz and It\^{o}-isometry we obtain
\begin{align*}
&\lim_{h\searrow 0}\left|\frac{\E_x\left(\int_0^{\tau_h}sgn(X_s-x)\mu(X_s)ds\right)\left(\int_0^{\tau_h}sgn(X_s-x)\sigma(X_s)dW_s\right)}{\E_x(\tau_h)}\right|\\
&\enskip \leq \lim_{h\searrow 0}
\frac{\sqrt{
\E_x\left(\left(\int_0^{\tau_h}sgn(X_s-x)\mu(X_s)ds\right)^2\right)
\E_x\left(\left(\int_0^{\tau_h}sgn(X_s-x)\sigma(X_s)dW_s\right)^2\right)
}}{\E_x(\tau_h)}
\\
&\enskip \leq \lim_{h\searrow 0}\sup_{y_1,y_2\in[x-h,x+h]}|\mu(y_1)\sigma(y_2)| \sqrt{\frac{\E_x\left(\tau_h^2\right)}{\E_x\left(\tau_h\right)}\frac{\E_x\left(\tau_h\right)}{\E_x\left(\tau_h\right)}}=0.
\end{align*}
Finally,
\begin{align*}
\lim_{h\searrow 0}\frac{\E_x\left(\int_0^{\tau_h}\sigma(X_s)^2ds\right)}{\E_x\left(\tau_h\right)}=\sigma^2(x),
\end{align*}
which finishes the proof of the second result. Let us now prove the third result. Using, e.g., Cauchy-Schwartz and that the local time is an increasing non-negative process it is directly seen that 
\begin{align*}
1 
& \geq \frac{\E_x\left(l_{\tau^\Psi\wedge\tau_h}^x\right)}{\E_x\left(l_{\tau_h}^x\right)}\\
& \geq \frac{\E_x\left(l_{\tau_h}^x  - l_{\tau_h}^x I_{\{ \tau^\Psi< \tau_h \}}\right)}{\E_x\left(l_{\tau_h}^x\right)}\\
& = 1 -  \frac{\E_x\left( l_{\tau_h}^x I_{\{ \tau^\Psi< \tau_h \}}\right)}{\E_x\left(l_{\tau_h}^x\right)}\\
&\geq  1 -  \sqrt{\frac{\E_x\left( \left(l_{\tau_h}^x\right)^2\right)}{\left(\E_x\left(l_{\tau_h}^x\right)\right)^2}\mathbb{P}_x\left(\tau^\Psi< \tau_h\right).}
\end{align*}
Sending $h\searrow 0$ implies that the ratio of expected values in the square root in the last term converges to a constant---to see this use the calculations above and Lemma \ref{lemma-tau-h-oct}---while the probability converges to $0$. This implies that 
\begin{align*} 
\lim_{h\searrow 0} \frac{\E_x\left(l_{\tau^\Psi\wedge\tau_h}^x\right)}{\E_x\left(l_{\tau_h}^x\right)} = 1,
\end{align*} 
which together with the last result of Lemma \ref{lemma-tau-h-oct} implies that the third result holds. The last result can be proved using the same techniques as for the third result---relying also on the fact that
$\frac{\E_x\left(\tau_h^2\right)}{\left(\E_x(\tau_h)\right)^2}$ converges to a constant as $h \searrow 0$, which can be seen using the same argument as in the proof of Lemma \cite[Lemma A.5]{christensen2020time}.

\end{proof}

\subsection{A result used in Section \ref{sec:main} and the proof of Lemma \ref{main_lemma}}

\begin{lemma}\label{mp_lemma} For any $\tau^{\Lambda,\ubar{x},\bar{x}} \in \mathcal{N}$, it holds that 
\begin{align*} 
J_{\tau^{\Lambda,\ubar{x},\bar{x}}\circ \theta_{\tau_h}+\tau_h}(x)=\int_0^\infty\E_x\left(  \int_0^{\tau_h}e^{-rs}f(X_s)ds+e^{-r\tau_h}w_{\tau^{\Lambda,\ubar{x},\bar{x}}}(X_{\tau_h},r)\right)dF(r).
\end{align*}
\end{lemma}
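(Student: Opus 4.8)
The plan is to reduce the statement to a one-discount-rate identity via \eqref{J-in-terms-w} and then to recognize the right-hand side as a strong Markov (dynamic programming) decomposition of the cost at the exit time $\tau_h$. Writing $\rho:=\tau^{\Lambda,\ubar x,\bar x}\circ\theta_{\tau_h}+\tau_h$, we have $J_\rho(x)=\int_0^\infty w_\rho(x,r)\,dF(r)$, so it suffices to prove, for each fixed $r$ in the support of $F$, that
\[
w_{\rho}(x,r)=\E_x\Big(\int_0^{\tau_h}e^{-rs}f(X_s)\,ds+e^{-r\tau_h}w_{\tau^{\Lambda,\ubar x,\bar x}}(X_{\tau_h},r)\Big),
\]
and then to interchange the $dF(r)$-integration with $\E_x$. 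The interchange is justified by Fubini together with the standing bound $\sup_\tau w_\tau(x,r)\le C(x,r)$ and the local boundedness of $x\mapsto\int_0^\infty C(x,r)\,dF(r)$, using also $e^{-r\tau_h}\le 1$, $f\ge 0$, and the convention $e^{-r\tau}g(X_\tau)=0$ on $\{\tau=\infty\}$.

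For the one-rate identity, note first that $\rho\ge\tau_h$ $\mathbb P_x$-a.s., so the running cost splits as $\int_0^{\rho}=\int_0^{\tau_h}+\int_{\tau_h}^{\rho}$, giving
\[
w_\rho(x,r)=\E_x\Big(\int_0^{\tau_h}e^{-rs}f(X_s)\,ds\Big)+\E_x\Big(\int_{\tau_h}^{\rho}e^{-rs}f(X_s)\,ds+e^{-r\rho}g(X_\rho)\Big).
\]
In the second expectation I substitute $s=u+\tau_h$, pull out $e^{-r\tau_h}$, and use the shift-operator identities $X_{u+\tau_h}=X_u\circ\theta_{\tau_h}$ (the Assumption applied to the deterministic stopping time $u$) and $X_{\rho}=X_{\tau^{\Lambda,\ubar x,\bar x}}\circ\theta_{\tau_h}$ (the Assumption applied to $\tau^{\Lambda,\ubar x,\bar x}$, together with commutation of $\theta_{\tau_h}$ with the integral), so that the bracket becomes $e^{-r\tau_h}\big(\int_0^{\tau^{\Lambda,\ubar x,\bar x}}e^{-ru}f(X_u)\,du+e^{-r\tau^{\Lambda,\ubar x,\bar x}}g(X_{\tau^{\Lambda,\ubar x,\bar x}})\big)\circ\theta_{\tau_h}$. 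Conditioning on $\mathcal F_{\tau_h}$ and using the strong Markov property of $X$—together with the fact that the exponential threshold $U$ driving the randomization is independent of $\mathcal F_{\tau_h}$, so that, conditionally on $\mathcal F_{\tau_h}$, the pair $(\tau^{\Lambda,\ubar x,\bar x}\circ\theta_{\tau_h},(X_{t+\tau_h})_{t\ge0})$ has the law of $(\tau^{\Lambda,\ubar x,\bar x},(X_t)_{t\ge0})$ under $\mathbb P_{X_{\tau_h}}$—this conditional expectation equals $e^{-r\tau_h}w_{\tau^{\Lambda,\ubar x,\bar x}}(X_{\tau_h},r)$. Taking expectations and adding the first term yields the identity; admissibility $\tau^{\Lambda,\ubar x,\bar x}\in\mathcal N$ ensures $x\mapsto w_{\tau^{\Lambda,\ubar x,\bar x}}(x,r)$ is continuous, hence $w_{\tau^{\Lambda,\ubar x,\bar x}}(X_{\tau_h},r)$ is a well-defined measurable random variable.

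The main obstacle I anticipate is the bookkeeping in the step where the strong Markov property is applied to the \emph{randomized} stopping time: one must verify that shifting by $\tau_h$ and restarting the local time $l^{\ubar x}$ and the additive functional $\int_0^\cdot\lambda(X_s)\,ds$ at $\tau_h$, driven by a fresh exponential threshold independent of $\mathcal F_{\tau_h}$, reproduces exactly the conditional law of $\tau^{\Lambda,\ubar x,\bar x}$ started from $X_{\tau_h}$; this is precisely where the structural hypotheses on $\theta$ and the independence of $U$ are used in full. The remaining steps—splitting the integral, the change of variables, and the Fubini exchange—are routine given the integrability assumption.
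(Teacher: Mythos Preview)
Your proposal is correct and follows essentially the same approach as the paper's proof: use \eqref{J-in-terms-w} and Fubini to reduce to a single-rate identity, split the running cost at $\tau_h$, change variables $s=u+\tau_h$, and apply the strong Markov property to identify the post-$\tau_h$ part as $e^{-r\tau_h}w_{\tau^{\Lambda,\ubar x,\bar x}}(X_{\tau_h},r)$. Your write-up is in fact more explicit than the paper's on the point you flag as the main obstacle---the paper simply asserts ``using the strong Markov property'' without spelling out that the independent exponential $U$ and the additivity of $t\mapsto\Lambda_t$ under the shift are what make the restarted randomized stopping time have the correct law under $\mathbb P_{X_{\tau_h}}$.
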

\begin{proof}
By Fubini and the definition of the WDF we obtain
\begin{align*}
J&_{\tau^{\Lambda,\ubar{x},\bar{x}}\circ \theta_{\tau_h}+\tau_h}(x)-\int_0^\infty\E_x\left(  \int_0^{\tau_h}e^{-rs}f(X_s)ds\right)dF(r)\\
&=\int_0^\infty\E_x\left(e^{-r\tau_h}\left(\int_{0}^{\tau^{\Lambda,\ubar{x},\bar{x}}\circ \theta_{\tau_h}}e^{-rs}f(X_{s+\tau_h})ds+e^{-r\tau^{\Lambda,\ubar{x},\bar{x}}\circ \theta_{\tau_h}}g\left(X_{\tau^{\Lambda,\ubar{x},\bar{x}}\circ \theta_{\tau_h}+\tau_h}\right)\right)\right)dF(r).
\end{align*}
Here, the term inside the expectation represents the exponentially discounted cost of a process starting at $X_{\tau_h}$ and using the strong Markov property we obtain
\begin{align*}
J&_{\tau^{\Lambda,\ubar{x},\bar{x}}\circ \theta_{\tau_h}+\tau_h}(x)-\int_0^\infty\E_x\left(  \int_0^{\tau_h}e^{-rs}f(X_s)ds\right)dF(r)
=\int_0^\infty \E_x\left(e^{-r\tau_h}w_{\tau^{\Lambda,\ubar{x},\bar{x}}}(X_{\tau_h},r)\right)dF(r).
\end{align*}
Rearranging completes the proof.
\end{proof}

\begin{proof}[Proof of Lemma \ref{main_lemma}] 
%

Using arguments similar to those of the proof of Lemma \ref{main_lemma_local} we find, that for any sufficiently small $h>0$, it holds that
\begin{align*}
&J_{\tau^{\Lambda,\ubar{x},\bar{x}}\circ \theta_{\tau_h}+\tau_h}(x)-J_{\tau^{\Lambda,\ubar{x},\bar{x}} \diamond \tau^{\Psi,\mathcal{X},D}(h)}(x)\\
\enskip &=\int_0^\infty\E_x\left( I_{\{\tau^\Psi \leq \tau_h\}}\left(e^{-r\tau_h}w_{\tau^{\Lambda,\ubar{x},\bar{x}}}(X_{\tau_h},r)+\int_{\tau_\Psi}^{\tau_h} e^{-rs} f(X_s)ds-e^{-r\tau^\Psi}g(X_{\tau^\Psi})\right)\right)dF(r)\\
\enskip &=\int_0^\infty\E_x\left(\int_0^{\tau_h}
\left(e^{-r\tau_h}w_{\tau^{\Lambda,\ubar{x},\bar{x}}}(X_{\tau_h},r)+\int_{t}^{\tau_h} e^{-rs}f(X_s)ds-e^{-rt}g(X_t)
\right)\psi(X_t)e^{-\int_0^t\psi(X_s)ds}dt\right)dF(r)
\end{align*}
where we used that $d\mathbb{P}_x\left( \tau^\Psi  \leq t \enskip \vline \enskip {\cal F} ^X_\infty  \right)=e^{-\Psi_t}d\Psi_t = \psi(X_t)e^{-\int_0^t\psi(X_s)ds}dt$ on the stochastic interval $[0,\tau_h]$ for sufficiently small $h>0$ (since $ x\notin \cal X$). 
Now use the continuity of the paths of $X$ in combination with the fact that $\psi(X_t)$ is bounded on $[0,\tau_h]$, to obtain the desired limit of the expectation as follows. For the middle term we find 
\begin{align*}
\left|\frac{\int_0^\infty\E_x\left(\int_0^{\tau_h}\psi(X_t)e^{-\int_0^t\psi(X_s)ds}\int_{t}^{\tau_h} e^{-rs}f(X_s)dsdt\right)dF(r)}{\E_x(\tau_h)}\right|
\leq \sup_{y_1,y_2\in [x-h,x+h]}\psi(y_1)f(y_2)\frac{\E_x\left(   \tau_h^2  \right)}{\E_x(\tau_h)} 
\rightarrow 0,
\end{align*}
as $h \searrow 0$; where we have used Lemma \ref{lemma-tau-h-oct}. For the last term we obtain
\begin{align*}
&\frac{\int_0^\infty \E_x\left(
\int_0^{\tau_h}\psi(X_t)e^{-\int_0^t\psi(X_s)ds}e^{-rt}g(X_t)dt\right)dF(r)}{\E_x(\tau_h)}\\ \enskip 
&=\frac{ \E_x\left(
\int_0^{\tau_h}\psi(X_t)g(X_t)dt\right)}{\E_x(\tau_h)}-
\frac{\int_0^\infty \E_x\left(
\int_0^{\tau_h}\psi(X_t)\left(1-e^{-\left(\int_0^t\psi(X_s)ds +rt\right)}\right)g(X_t)dt\right)dF(r)}{\E_x(\tau_h)}.
\end{align*}
Taking a look at the terms separately, we obtain using that $\psi(\cdot)$ is RCLL and basic properties of diffusions that
\begin{align*}
\lim_{h\searrow 0}\frac{\E_x\left(\int_0^{\tau_h}\psi(X_t)g(X_t)dt\right)}{\E_x(\tau_h)}=\frac{1}{2}(\psi(x-)+\psi(x))g(x),
\end{align*}
and using $|1-e^{-y}|\leq y$ for $y\geq 0$ we find
\begin{align*}
&\left|\frac{\int_0^\infty \E_x\left(
\int_0^{\tau_h}\psi(X_t)\left(1-e^{-\left(\int_0^t\psi(X_s)ds +rt\right)}\right)g(X_t)dt\right)dF(r)}{\E_x(\tau_h)}\right|\\
& \leq
\left|\frac{\int_0^\infty\E_x\left(
\int_0^{\tau_h}\psi(X_t)\left(\int_0^{\tau_h}\psi(X_s)ds +r \tau_h  \right)g(X_t)dt\right)dF(r)}{\E_x(\tau_h)}\right|\\
&\enskip \leq \sup_{y_1,y_2,y_3\in[x-h,x+h]}g(y_1)\psi(y_2)\left(\psi(y_3)+\int_0 ^\infty rdF(r)\right) \frac{\E_x\left(   \tau_h^2  \right)}{\E_x(\tau_h)} 
\rightarrow 0, \enskip \mbox{ as } \enskip h \searrow 0.
\end{align*}
Finally, we take a look at the first term, for which we, using similar arguments as above (and also \eqref{J-in-terms-w}) obtain 
\begin{align*}
\lim_{h\searrow 0}\frac{\int_0^\infty \E_x\left(\int_0^{\tau_h}\psi(X_t)e^{-\int_0^t\psi(X_s)ds}e^{-r\tau_h}w_{\tau^{\Lambda,\ubar{x},\bar{x}}}(X_{\tau_h},r)dt\right)dF(r)}{\E_x(\tau_h)}=\frac{1}{2}(\psi(x-)+\psi(x))J_{\tau^{\Lambda,\ubar{x},\bar{x}}}(x).
\end{align*}
The proof of the second result is analogous, which can be seen by setting $(\alpha,\bar{x})=D$ and using \eqref{sec_resul_lambda}. 
\end{proof}

\bibliographystyle{abbrv}
\bibliography{TimeInconStopGBMAndKriSor20220630}

\end{document}